\tikzset{cd/.style=matrix of math nodes,row sep=2em,column sep=2em, text height=1.5ex, text depth=0.5ex}
\tikzset{cdar/.style=->,auto}
\setlist[enumerate,1]{label=\textup{(\arabic*)}}
\setlist[enumerate,2]{label=\textup{(\alph*)}}
\newcommand{\tenscorep}{\mathbin{\begin{tikzpicture}[baseline,x=.75ex,y=.75ex] \draw[line width=.2pt] (-0.8,1.15)--(0.8,1.15);\draw[line width=.2pt](0,-0.25)--(0,1.15); \draw[line width=.2pt] (0,0.75) circle [radius = 1];\end{tikzpicture}}}
\newcommand*{\MRref}[2]{ \href{http://www.ams.org/mathscinet-getitem?mr=#1}{MR #1}}
\newcommand*{\arxiv}[1]{ \href{http://www.arxiv.org/abs/#1}{arXiv:#1}}
\renewcommand{\PrintDOI}[1]{\href{http://dx.doi.org/#1}{DOI #1}%
  \IfEmptyBibField{volume}{, (to appear in print)}{}}
\numberwithin{equation}{section}
\theoremstyle{plain}
\newtheorem{theorem}[equation]{Theorem}
\newtheorem{lemma}[equation]{Lemma}
\newtheorem{proposition}[equation]{Proposition}
\newtheorem{corollary}[equation]{Corollary}
\theoremstyle{definition}
\newtheorem{definition}[equation]{Definition}
\theoremstyle{remark}
\newtheorem{remark}[equation]{Remark}
\newtheorem{example}[equation]{Example}
\newcommand*{\C}{\mathbb C}
\newcommand*{\Z}{\mathbb Z}
\newcommand*{\blank}{\textup{\textvisiblespace}}
\newcommand*{\Un}{\textup{U}}
\newcommand*{\diff}{\textup d}
\newcommand*{\univ}{\textup u}
\newcommand*{\Ad}{\textup{Ad}}
\newcommand*{\copp}{\textup{cop}}
\newcommand*{\defeq}{\mathrel{\vcentcolon=}}
\newcommand*{\nb}{\nobreakdash}
\newcommand*{\Star}{*-}
\newcommand*{\Hils}[1][H]{\mathcal #1}
\newcommand*{\Bound}{\mathbb B}
\newcommand*{\Comp}{\mathbb K}
\DeclareMathOperator{\Hom}{Hom}
\DeclareMathOperator{\Aut}{Aut}
\newcommand*{\Cont}{\textup{C}}
\newcommand*{\Contvin}{\textup C_0}
\newcommand*{\Cst}{\textup C^*}
\newcommand*{\Cred}{\textup C^*_\textup r}
\newcommand*{\CLS}{\mathrm{CLS}}
\newcommand*{\Mult}{\mathcal M}
\newcommand*{\U}{\mathcal U}
\newcommand*{\Mor}{\textup{Mor}}
\newcommand*{\Id}{\textup{id}}
\newcommand*{\Cstcat}{\mathfrak{C^*alg}}
\newcommand*{\Forget}{\mathsf{For}}
\newcommand*{\Flip}{\Sigma}
\newcommand*{\flip}{\sigma}
\newcommand*{\Bialg}[1]{(#1,\Comult[#1])}
\newcommand*{\DuBialg}[1]{(\hat{#1},\DuComult[#1])}
\newcommand*{\G}[1][G]{\mathbb #1}
\newcommand*{\DuG}[1][G]{\widehat{\mathbb{#1}}}
\newcommand*{\Qgrp}[2]{\mathbb{#1}=\Bialg{#2}}
\newcommand*{\DuQgrp}[2]{\widehat{\mathbb{#1}}=\DuBialg{#2}}
\newcommand*{\Comult}[1][]{\Delta_{#1}}
\newcommand*{\DuComult}[1][]{\hat{\Delta}_{#1}}
\DeclareRobustCommand{\rchi}{{\mathpalette\irchi\relax}}
\newcommand*{\irchi}[2]{\raisebox{\depth}{$#1\chi$}} 
\newcommand*{\bichar}{\rchi}
\newcommand*{\Dubichar}{\hat{\bichar}}
\newcommand*{\Multunit}{\mathbb{W}}
\newcommand*{\multunit}[1][]{\textup{W}^{#1}}
\newcommand*{\DuMultunit}{\widehat{\mathbb W}}
\newcommand*{\Dumultunit}[1][]{\widehat{\textup W}{}^{#1}}
\newcommand*{\Rmattxt}{R}
\newcommand*{\Rmat}{\textup R}
\newcommand*{\UnivRmat}{\Rmat^\univ}
\newcommand*{\Rmatbraid}[2]{\textup X^{({#1},{#2})}}
\newcommand*{\Braiding}[2]{\begin{tikzpicture}[baseline]
    \draw[-] (0,0) -- (1.4ex,1.4ex) node[right,inner sep=0pt] {$\scriptstyle #2$};
    \draw[-,draw=white,line width=2.4pt] (0,1.4ex) -- (1.4ex,0);
    \draw[-] (1.4ex,0) -- (0,1.4ex) node[left,inner sep=0pt] {$\scriptstyle #1$};
  \end{tikzpicture}}
\newcommand*{\Dualbraiding}[2]{\begin{tikzpicture}[baseline]
    \draw[-] (1.4ex,0) -- (0,1.4ex) node[left,inner sep=0pt] {$\scriptstyle #1$};
    \draw[-,draw=white,line width=2.4pt] (0,0) -- (1.4ex,1.4ex);
    \draw[-] (0,0) -- (1.4ex,1.4ex) node[right,inner sep=0pt] {$\scriptstyle #2$};
  \end{tikzpicture}}
\newcommand*{\YDcat}{\mathcal{YD}\mathfrak{C^*alg}}
\newcommand*{\corep}[1]{\textup{#1}}          
\newcommand*{\Corep}[1]{\mathbb{#1}}          
\newcommand*{\Ducorep}[1]{\hat{\corep{#1}}}   
\newcommand*{\DuCorep}[1]{\hat{\Corep{#1}}}   
\newcommand*{\maxcorep}[1][]{\mathcal{V}^{#1}}
\newcommand*{\dumaxcorep}[1][]{\tilde{\mathcal{V}}^{#1}}
\newcommand*{\Corepcat}{\mathfrak{Corep}}
\newcommand*{\Drinfdouble}[1]{\mathfrak{D}(#1)}
\newcommand*{\Codouble}[1]{\mathfrak{D}({#1}){ }\sphat\text{\space}}
\newcommand*{\DrinfDoubAlg}{\mathcal{D}}
\newcommand*{\CodoubAlg}{\hat{\mathcal{D}}}
\begin{document}
\title{Quantum group-twisted tensor products of \(\Cst\)-algebras II}

\author{Ralf Meyer}
\email{rmeyer2@uni-goettingen.de}
\address{Mathematisches Institut\\
  Georg-August Universität Göttingen\\
  Bunsenstraße 3--5\\
  37073 Göttingen\\
  Germany}
  
\author{Sutanu Roy}
\email{sr26@uottawa.ca}
\address{Department of Mathematics and Statistics\\
  University of Ottawa\\
  585 King Edward Avenue\\
  Ottawa, ON K1N 6N5\\
  Canada}

\author{Stanisław Lech Woronowicz}
\email{Stanislaw.Woronowicz@fuw.edu.pl}
\address{Instytut Matematyki\\Uniwersytet w Białymstoku, and\\
  Katedra Metod Matematycznych Fizyki\\
  Wydział Fizyki, Uniwersytet Warszawski\\
  Pasteura 5, 02-093 Warszawa, Poland}

\begin{abstract}
  For a quasitriangular \(\Cst\)\nb-quantum group, we enrich the
  twisted tensor product constructed in the first part of this series
  to a monoidal structure on the category of its continuous coactions
  on \(\Cst\)\nb-algebras.  We define braided \(\Cst\)\nb-quantum
  groups, where the comultiplication takes values in a twisted tensor
  product.  We show that compact braided \(\Cst\)\nb-quantum groups
  yield compact quantum groups by a semidirect product construction.
\end{abstract}

\subjclass[2010]{46L55 (81R50, 46L06, 18D10)}
\keywords{\(\Cst\)\nb-quantum group, braiding, quantum codouble,
  semidirect product, bosonisation}

\thanks{Supported by the German Research Foundation (Deutsche Forschungsgemeinschaft (DFG)) through the Research Training Group 1493 and by the Alexander von Humboldt-Stiftung.}

\maketitle

\section{Introduction}
\label{sec:introduction}

Let \(C\) and~\(D\) be \(\Cst\)\nb-algebras with a coaction of a
\(\Cst\)\nb-quantum group \(\G=(A,\Comult[A])\).  As
in~\cite{Meyer-Roy-Woronowicz:Twisted_tensor}, \(\Cst\)\nb-quantum
groups are generated by manageable multiplicative unitaries, and Haar
weights are not assumed.  If~\(\G\) is a group, then the
\(\Cst\)\nb-tensor product \(C\otimes D\) inherits a diagonal
coaction.  This fails for quantum groups because the diagonal
coaction is not compatible with the multiplication in the tensor
product.  We use the noncommutative tensor products described
in~\cite{Meyer-Roy-Woronowicz:Twisted_tensor} to construct a
monoidal structure on the category of \(\G\)\nb-\(\Cst\)-algebras
if~\(\G\) is quasitriangular in a suitable sense.

Such a structure is to be expected from the analogous situation for
(co)module algebras over a Hopf algebra.  In that context, an
\(\Rmat\)\nb-matrix for the dual Hopf algebra allows to deform the
multiplication on the tensor product of two \(H\)\nb-comodule algebras
so as to get an \(H\)\nb-comodule algebra again.  For
\(\Cst\)\nb-quantum groups, Hopf module structures are replaced by
comodule structures.  Hence we call~\(\G\) quasitriangular if there is
a unitary \(\Rmat\)\nb-matrix \(\Rmat\in\U(\hat{A}\otimes\hat{A})\)
for the dual \(\Cst\)\nb-quantum group.

Since~\(\Rmat\) is a bicharacter, the braided tensor product
\(C\boxtimes D \defeq (C,\gamma)\boxtimes_\Rmat (D,\delta)\)
in~\cite{Meyer-Roy-Woronowicz:Twisted_tensor} is defined if
\((C,\gamma)\) and~\((D,\delta)\) are \(\Cst\)\nb-algebras with
continuous coactions of~\(\G\).  We show that \(C\boxtimes D\) carries
a unique continuous coaction \(\gamma\bowtie\delta\) of~\(\G\) for
which the canonical embeddings of \(C\) and~\(D\) are
\(\G\)\nb-equivariant.  (We do not denote this coaction
by~\(\gamma\boxtimes\delta\) because~\(\boxtimes\) is a bifunctor, and
the \Star{}homomorphism~\(\gamma\boxtimes\delta\) given by this
bifunctoriality is \emph{not} \(\gamma\bowtie\delta\).)

It is crucial for the theory here and
in~\cite{Meyer-Roy-Woronowicz:Twisted_tensor} that \(\Rmat\) is
unitary.  This rules out some important examples of quasitriangular
Hopf algebras.  For instance, \(\Rmat\)\nb-matrices for quantum
deformations of compact simple Lie groups are non-unitary.

If \((E,\epsilon)\) is another \(\Cst\)\nb-algebra with a continuous
coaction of~\(\G\), then there is a canonical isomorphism
\((C\boxtimes D)\boxtimes E\cong C\boxtimes (D\boxtimes E)\).
If \(C\) or~\(D\) carries a trivial \(\G\)\nb-coaction, then
\(C\boxtimes D=C\otimes D\), and \(\gamma\bowtie\delta\) is the
obvious induced action, \(\gamma\otimes\Id_D\) or
\(\Id_C\otimes\delta\).  Thus our tensor product on
\(\G\)\nb-coactions is monoidal: the tensor unit is~\(\C\) with
trivial coaction.  The tensor product of coactions is braided
monoidal if and only if it is symmetric monoidal, if and only if the
\(\Rmat\)\nb-matrix is antisymmetric.  This rarely happens, and it
should not be expected because this also usually fails on the Hopf
algebra level.  What should be braided is the category of Hilbert
space corepresentations.  This is indeed the case, and we use it to
prove that the tensor product for coactions is associative and
monoidal.

An \(\Rmat\)\nb-matrix \(\Rmat\in\U(\hat{A}\otimes\hat{A})\) lifts
uniquely to a \emph{universal} \(\Rmat\)\nb-matrix
\(\Rmat\in\U(\hat{A}^\univ\otimes\hat{A}^\univ)\) for the universal
quantum group~\(\hat{A}^\univ\), so it makes no difference whether
we consider \(\Rmat\)\nb-matrices for \(\hat{A}\)
or~\(\hat{A}^\univ\).  Since Hilbert space corepresentations
of~\(A\) are equivalent to Hilbert space representations
of~\(\hat{A}^\univ\), an \(\Rmat\)\nb-matrix for~\(\hat{A}^\univ\)
induces a braiding on the monoidal category of Hilbert space
corepresentations of~\(\G\).

If~\(\G\) is the quantum group of functions on an Abelian locally
compact group~\(\Gamma\), then its \(\Rmat\)\nb-matrices are simply
bicharacters \(\hat\Gamma\times\hat\Gamma\to \textup{U}(1)\).  For
instance, if~\(\Gamma\) is~\(\Z/2\), there are two such
bicharacters.  One gives the ordinary commutative tensor product
with the diagonal coaction, the other gives the skew-commutative
tensor product with diagonal \(\Z/2\)\nb-coaction.

A well-known class of quasitriangular Hopf algebras are Drinfeld
doubles; their module algebras are the same as Yetter--Drinfeld
algebras.  The dual of the Drinfeld double \(\Drinfdouble{\G}\)
of~\(\G\) is the Drinfeld codouble~\(\Codouble{\G}\), which we just
call quantum codouble.  (What we call quantum codouble is called
quantum double in~\cite{Nest-Voigt:Poincare}.)

For our class of \(\Cst\)\nb-quantum groups, quantum codoubles and
doubles and corresponding multiplicative unitaries are described
in~\cite{Roy:Codoubles}.  It is already shown in~\cite{Roy:Codoubles}
that quantum codoubles are quasitriangular and that
\(\Codouble{\G}\)-\(\Cst\)\nb-algebras are the same as
\(\G\)\nb-Yetter--Drinfeld \(\Cst\)\nb-algebras.  In this article, we
identify the twisted tensor product for the canonical
\(\Rmat\)\nb-matrix of~\(\Codouble{\G}\) with the twisted tensor
product in the category of \(\G\)-Yetter--Drinfeld
\(\Cst\)\nb-algebras constructed in~\cite{Nest-Voigt:Poincare}.

A \emph{braided \(\Cst\)\nb-bialgebra} is a
\(\G\)\nb-\(\Cst\)-algebra \((B,\beta)\) with a
comultiplication
\[
\Comult[B]\colon B\to B\boxtimes B
\]
that is coassociative.  We call \((B,\beta,\Comult[B])\) a
\emph{braided compact quantum group} if~\(B\) is unital
and~\(\Comult[B]\) satisfies an appropriate Podle\'s condition.

We are particularly interested in braided quantum groups over a
codouble~\(\Codouble{\G}\) because they appear in a
quantum group version of semidirect products.  In the group case,
the construction of a semidirect product~\(G\ltimes H\) requires a
conjugation action of~\(G\) on~\(H\) such that the multiplication
map \(H\times H\to H\) is \(G\)\nb-equivariant.  For quantum group
semidirect products, the equivariance of the multiplication map on
the underlying \(\Cst\)\nb-algebra~\(B\) of~\(\G[H]\) only makes
sense if we deform the tensor product because there is no canonical
\(\G\)\nb-coaction on \(B\otimes B\).  A theorem of
Radford~\cite{Radford:Hopf_projection}*{Theorem 3} for the analogous
situation in the world of Hopf algebras suggests that~\(\G[H]\)
should be a braided quantum group over the codouble
\(\Codouble{\G}\) of~\(\G\).  In this case, we describe an induced
\(\Cst\)\nb-bialgebra structure on~\(A\boxtimes B\).  This is a
\(\Cst\)\nb-algebraic analogue of what Majid calls ``bosonisation''
in~\cite{Majid:Hopfalg_in_BrdCat}.  We prefer to call the
construction of \(A\boxtimes B\) a ``semidirect product.''  If~\(A\)
is a compact quantum group and~\(B\) is a braided compact quantum
group, then their semidirect product \(A\boxtimes B\) is a compact
quantum group.

As a first example, we construct a \(\Cst\)\nb-algebraic analogue of
the partial duals studied
in~\cite{Barvels-Lentner-Schweigert:Partial_dual}.  We have
constructed braided quantum SU(2) groups with complex deformation
parameter~\(q\) together with Paweł Kasprzak
in~\cite{Kasprzak-Meyer-Roy-Woronowicz:Braided_SU2}; their
semidirect products are the deformation quantisations of the unitary
group U(2) defined in~\cite{Zhang-Zhao:Uq2}.

The construction of the \(\Cst\)\nb-bialgebra \(A\boxtimes B\) works
in great generality.  For this to be a \(\Cst\)\nb-quantum group, we
would need a multiplicative unitary for it.  Then it is best to work
on the level of multiplicative unitaries throughout.  That is done
in~\cite{Roy:Qgrp_with_proj}.  On that level, one can also go back
and decompose a semidirect product into the two factors.  Here we
limit our attention to the compact case, where bisimplifiability is
enough to get a quantum group.

We briefly summarise the following sections.  In
Section~\ref{sec:R-matrices}, we define \Rmattxt\nb-matrices and
show that they lift to the universal quantum group.  In
Section~\ref{sec:corepcat_qasi_triag_qntgrp}, we describe the
braided monoidal structure on the category of Hilbert space
corepresentations for a quasitriangular \(\Cst\)\nb-quantum group.
As an example, we consider the case of Abelian groups.  In
Section~\ref{sec:Braided_tens_Cst} we construct the ``diagonal''
action of a quasitriangular quantum group on tensor products twisted
by the \(\Rmat\)\nb-matrix and show that it gives a monoidal
structure on \(\G\)\nb-\(\Cst\)-algebras.
Section~\ref{sec:coact_cat_codoub} studies the case of quantum
codoubles, where coactions on \(\Cst\)\nb-algebras are equivalent to
Yetter--Drinfeld algebra structures.
Section~\ref{sec:braid_C_star_bialg} contains the semidirect product
construction for braided \(\Cst\)\nb-bialgebras.  The appendix
recalls basic results about \(\Cst\)\nb-quantum groups and some
results of our previous articles for the convenience of the reader.
There are also some new observations about Heisenberg pairs in
Appendix~\ref{subsec:Heis_pair_crossed_prod}, which would fit better
into~\cite{Meyer-Roy-Woronowicz:Twisted_tensor} but were left out
there.

\section{R-matrices}
\label{sec:R-matrices}

Let \(\Qgrp{G}{A}\) be a \(\Cst\)\nb-quantum group and let
\(\multunit\in\U(\hat{A}\otimes A)\) be its reduced bicharacter; see
Appendix~\ref{sec:multunit_quantum_groups} and
Definition~\ref{def:bicharacter}.

\begin{definition}
  \label{def:R_matix}
  A bicharacter \(\Rmat\in\U(A\otimes A)\) is called an
  \emph{\Rmattxt\nb-matrix} if
  \begin{equation}
    \label{eq:reduced_R_mat_equivariant_cond}
    \Rmat(\flip\circ\Comult[A](a))\Rmat^* = \Comult[A](a)
    \qquad\text{for all }a\in A.
  \end{equation}
\end{definition}

\begin{lemma}
  \label{lemm:dual_R_mat}
  The \emph{dual} \(\hat{\Rmat}\defeq \flip(\Rmat^*) \in
  \U(A\otimes A)\) of a bicharacter \(\Rmat\in\U(A \otimes A)\) is an
  \Rmattxt\nb-matrix if and only if~\(\Rmat\) is an
  \Rmattxt\nb-matrix.\qed
\end{lemma}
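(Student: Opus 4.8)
I would split ``\(\hat{\Rmat}\) is an \Rmattxt\nb-matrix'' into its two constituent requirements: that \(\hat{\Rmat}\) is a bicharacter and that \(\hat{\Rmat}\) satisfies the equivariance condition~\eqref{eq:reduced_R_mat_equivariant_cond}. Since \(\Rmat\) is already assumed to be a bicharacter, the lemma follows once I show (i) that \(\flip(\Rmat^*)\) is a bicharacter whenever \(\Rmat\) is, and (ii) that \(\flip(\Rmat^*)\) satisfies~\eqref{eq:reduced_R_mat_equivariant_cond} if and only if \(\Rmat\) does; combining (i), (ii) and the standing hypothesis then yields both directions of the asserted equivalence.

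Claim~(i) is the standard fact that the dual of a bicharacter is a bicharacter: taking adjoints in the two bicharacter identities for \(\Rmat\) and conjugating with \(\flip\) --- a \Star{}automorphism of \(A\otimes A\) --- produces, after relabelling the tensor legs, the two bicharacter identities for \(\flip(\Rmat^*)\). For claim~(ii), I would first record that~\eqref{eq:reduced_R_mat_equivariant_cond} is equivalent, by multiplying with the unitary \(\Rmat\) on the left and with \(\Rmat^*\) on the right, to
\[
  \Rmat^*\,\Comult[A](a)\,\Rmat = \flip\bigl(\Comult[A](a)\bigr)\qquad\text{for all }a\in A.
\]
Now write out~\eqref{eq:reduced_R_mat_equivariant_cond} with \(\hat{\Rmat}=\flip(\Rmat^*)\) in place of \(\Rmat\); using \(\hat{\Rmat}^*=\flip(\Rmat)\) it reads \(\flip(\Rmat^*)\,\flip\bigl(\Comult[A](a)\bigr)\,\flip(\Rmat)=\Comult[A](a)\). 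Since \(\flip\) is multiplicative, the left-hand side equals \(\flip\bigl(\Rmat^*\,\Comult[A](a)\,\Rmat\bigr)\), so applying \(\flip\) to both sides and using \(\flip^2=\Id\) turns this identity into exactly the displayed reformulation of~\eqref{eq:reduced_R_mat_equivariant_cond}. Hence \(\hat{\Rmat}\) satisfies~\eqref{eq:reduced_R_mat_equivariant_cond} precisely when \(\Rmat\) does.

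So the proof is a short manipulation with the flip \Star{}automorphism. The only point needing any care is claim~(i), i.e.\ the bookkeeping of tensor legs when checking that \(\flip(\Rmat^*)\) is again a bicharacter; this is routine and is in any case part of the standard theory of bicharacters.
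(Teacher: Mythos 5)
Your proof is correct. The paper gives no argument for this lemma at all (the \(\qed\) in the statement marks it as routine), and your verification --- that \(\flip(\Rmat^*)\) is again a bicharacter, and that conjugating the equivariance condition~\eqref{eq:reduced_R_mat_equivariant_cond} by the flip \Star{}automorphism, after the elementary reformulation \(\Rmat^*\Comult[A](a)\Rmat=\flip(\Comult[A](a))\), exchanges the conditions for \(\Rmat\) and \(\hat{\Rmat}\) --- is precisely the routine computation the authors leave to the reader.
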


\begin{remark}
  \label{rem:R-mat}
  The standard convention for Hopf algebras assumes
  \(\Rmat(\Comult[A](a))\Rmat^* = \flip\circ\Comult[A](a)\), which is
  opposite to~\eqref{eq:reduced_R_mat_equivariant_cond}.  Our
  convention in~\ref{eq:reduced_R_mat_equivariant_cond} becomes the
  standard one (see \cite{Majid:Quantum_grp}*{Definition 2.1.1}) if we
  replace~\(\Comult[A]\) by \(\Comult[A]^\copp\defeq
  \flip\circ\Comult[A]\) or \(\Rmat\) by~\(\Rmat^*\).
\end{remark}

In order to simplify proofs later, we lift an \Rmattxt\nb-matrix
\(\Rmat\in\U(A\otimes A)\) to \(\U(A^\univ\otimes A^\univ)\):

\begin{proposition}
  \label{prop:univ_R_matrix}
  There is a unique \(\UnivRmat\in\U(A^\univ\otimes A^\univ)\) with
  \begin{alignat}{2}
    \label{eq:Univ_R_mat_and_red_R_mat}
    (\Lambda\otimes\Lambda)\UnivRmat &=\Rmat
    &\qquad& \text{in }\U(A\otimes A),\\
    \label{eq:Univ_R_matrix_char_in_first_leg}
    (\Comult[A^\univ]\otimes\Id_{A^\univ}) \UnivRmat &= \UnivRmat_{23} \UnivRmat_{13}
    &\qquad& \text{in }\U(A^\univ\otimes A^\univ\otimes A^\univ),\\
    \label{eq:Univ_R_matrix_char_in_second_leg}
    (\Id_{A^\univ}\otimes \Comult[A^\univ]) \UnivRmat &= \UnivRmat_{12} \UnivRmat_{13}
    &\qquad& \text{in }\U(A^\univ\otimes A^\univ\otimes A^\univ).
  \end{alignat}
  This unitary also satisfies
  \begin{align}
    \label{eq:Univ_R_matrix_equivariant_cond}
    \UnivRmat(\flip\circ\Comult[A]^\univ(a)) (\UnivRmat)^* &=
    \Comult[A]^\univ (a)
    \qquad \text{for all }a\in A^\univ.
  \end{align}
\end{proposition}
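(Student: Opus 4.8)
The plan is to split the statement into two parts: the unique lift of~\(\Rmat\) as a bicharacter, and the extra equivariance identity~\eqref{eq:Univ_R_matrix_equivariant_cond}. For the former, observe that~\eqref{eq:Univ_R_matrix_char_in_first_leg} and~\eqref{eq:Univ_R_matrix_char_in_second_leg} say precisely that~\(\UnivRmat\) is again a bicharacter, now between the universal quantum groups, while~\eqref{eq:Univ_R_mat_and_red_R_mat} fixes it over~\(\Rmat\). Since~\(\Rmat\) is a bicharacter — equivalently, it corresponds to a morphism between~\(\DuG\) and~\(\G\) — and bicharacters between quantum groups are in natural bijection with bicharacters between the associated universal quantum groups, compatibly with the reducing morphisms \(\Lambda\colon A^\univ\to A\) and \(\hat{\Lambda}\colon\hat{A}^\univ\to\hat{A}\), there is a unique~\(\UnivRmat\) satisfying \eqref{eq:Univ_R_mat_and_red_R_mat}--\eqref{eq:Univ_R_matrix_char_in_second_leg}; I would quote this lifting of bicharacters to universal quantum groups from~\cite{Meyer-Roy-Woronowicz:Twisted_tensor} and the appendix. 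The same lifting also yields the half-reduced bicharacters \((\Lambda\otimes\Id)\UnivRmat\) and \((\Id\otimes\Lambda)\UnivRmat\) and the relation \((\Lambda\otimes\Lambda)\UnivRmat=\Rmat\).

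For~\eqref{eq:Univ_R_matrix_equivariant_cond} I would encode~\(\Comult[A^\univ]\) through the universal bicharacter \(\multunit[\univ]\in\U(\hat{A}^\univ\otimes A^\univ)\): its first-leg slices \((\omega\otimes\Id_{A^\univ})\multunit[\univ]\) span a dense subspace of~\(A^\univ\), and \((\Id\otimes\Comult[A^\univ])\multunit[\univ]=\multunit[\univ]_{12}\multunit[\univ]_{13}\). Putting \(a=(\omega\otimes\Id)\multunit[\univ]\) into~\eqref{eq:Univ_R_matrix_equivariant_cond}, computing \(\Comult[A^\univ](a)=(\omega\otimes\Id\otimes\Id)(\multunit[\univ]_{12}\multunit[\univ]_{13})\) and \(\flip\circ\Comult[A^\univ](a)=(\omega\otimes\Id\otimes\Id)(\multunit[\univ]_{13}\multunit[\univ]_{12})\), and then dropping~\(\omega\), turns~\eqref{eq:Univ_R_matrix_equivariant_cond} into the single unitary identity
\[
\UnivRmat_{23}\,\multunit[\univ]_{13}\,\multunit[\univ]_{12}\,\UnivRmat_{23}^{*}=\multunit[\univ]_{12}\,\multunit[\univ]_{13}\qquad\text{in }\U(\hat{A}^\univ\otimes A^\univ\otimes A^\univ);
\]
equivalently, \(\Ad_{\UnivRmat}\circ\flip\circ\Comult[A^\univ]\) and~\(\Comult[A^\univ]\) have the same universal bicharacter, hence agree. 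Applying \(\hat{\Lambda}\otimes\Lambda\otimes\Lambda\) and using \((\hat{\Lambda}\otimes\Lambda)\multunit[\univ]=\multunit\), \((\Lambda\otimes\Lambda)\UnivRmat=\Rmat\), this identity becomes \(\Rmat_{23}\multunit_{13}\multunit_{12}\Rmat_{23}^{*}=\multunit_{12}\multunit_{13}\), which holds: it is~\eqref{eq:reduced_R_mat_equivariant_cond} evaluated on \(a=(\omega\otimes\Id)\multunit\) with~\(\omega\) sliced off the \(\hat{A}\)\nb-leg. So the reduced form of the identity is true.

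The step I expect to be the main obstacle is upgrading this reduced identity back to the universal level, for the reducing morphism~\(\Lambda\) need not be injective, so equality after \(\hat{\Lambda}\otimes\Lambda\otimes\Lambda\) does not by itself give the universal identity. One has to use the structure of the universal construction: \(\Comult[A^\univ]\) is characterised among morphisms \(A^\univ\to\Mult(A^\univ\otimes A^\univ)\) by \((\Id\otimes\Comult[A^\univ])\multunit[\univ]=\multunit[\univ]_{12}\multunit[\univ]_{13}\), and the morphism \(\Ad_{\UnivRmat}\circ\flip\circ\Comult[A^\univ]\) shares enough of that structure to be pinned down by its reduction — for instance it satisfies both counit identities, because \((\epsilon\otimes\Id)\UnivRmat=1=(\Id\otimes\epsilon)\UnivRmat\) by \eqref{eq:Univ_R_matrix_char_in_first_leg}--\eqref{eq:Univ_R_matrix_char_in_second_leg}. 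Concretely, I would establish the displayed unitary identity by a direct computation from the bicharacter relations for~\(\UnivRmat\) and for~\(\multunit[\univ]\), feeding in the reduced equivariance~\eqref{eq:reduced_R_mat_equivariant_cond} at the point where a reduced leg appears; keeping track of which legs are universal and which reduced is the delicate bookkeeping. Finally, applying \((\Lambda\otimes\Lambda)\) to~\eqref{eq:Univ_R_matrix_equivariant_cond} gives back~\eqref{eq:reduced_R_mat_equivariant_cond}, so the lift is consistent with Definition~\ref{def:R_matix}.
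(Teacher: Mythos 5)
Your first half matches the paper: \eqref{eq:Univ_R_mat_and_red_R_mat}--\eqref{eq:Univ_R_matrix_char_in_second_leg} say that \(\UnivRmat\) is the unique bicharacter lifting \(\Rmat\) to the universal level, which is \cite{Meyer-Roy-Woronowicz:Homomorphisms}*{Proposition 4.7} (your citation of the twisted tensor product paper is off, but harmless). Your reformulation of \eqref{eq:Univ_R_matrix_equivariant_cond} as a commutation relation between \(\UnivRmat_{23}\) and the two legs of a universal bicharacter, and the observation that its reduction is exactly \eqref{eq:reduced_R_mat_equivariant_cond} in the form \(\Rmat_{23}\multunit_{13}\multunit_{12}=\multunit_{12}\multunit_{13}\Rmat_{23}\), is also how the paper sets up the problem.

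However, the step you yourself flag as ``the main obstacle'' is the entire content of the proof, and your proposal does not close it. Since \(\Lambda\) is not injective, the reduced identity does not imply the universal one, and it cannot be obtained by ``direct computation from the bicharacter relations'': those relations hold for \emph{every} bicharacter, while \eqref{eq:Univ_R_matrix_equivariant_cond} genuinely needs the input \eqref{eq:reduced_R_mat_equivariant_cond}, which lives only at the reduced level; likewise the counit identities for \(\UnivRmat\) do not pin down a morphism by its reduction. The missing tool is \cite{Meyer-Roy-Woronowicz:Homomorphisms}*{Lemma 4.6} (a variant of \cite{Kustermans:LCQG_universal}*{Result 6.1}): a unitary that is a corepresentation of \((A^\univ,\Comult[A^\univ])\) in a given leg is determined by its reduction \((\Id\otimes\Lambda)\) in that leg. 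The paper applies this twice, lifting one leg at a time via the half-reduced unitaries \(\overline{\Rmat}=(\Lambda\otimes\Id_{A^\univ})\UnivRmat\) and \(\maxcorep\in\U(\hat{A}\otimes A^\univ)\): first it checks that \(X=\multunit[*]_{12}\overline{\Rmat}_{23}\maxcorep_{13}\multunit_{12}\) and \(Y=\maxcorep_{13}\overline{\Rmat}_{23}\) are corepresentations in the third leg with equal reductions (this is where \eqref{eq:reduced_R_mat_equivariant_cond} enters), so \(X=Y\); then it repeats the argument in the second leg with \(\widetilde X=\maxcorep[*]_{13}(\UnivRmat_{23})^*\maxcorep_{12}\maxcorep_{13}\) and \(\widetilde Y=\maxcorep_{12}(\UnivRmat_{23})^*\), and \(\widetilde X=\widetilde Y\) is exactly the commutation relation equivalent to \eqref{eq:Univ_R_matrix_equivariant_cond}. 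Without this lifting lemma (or an equivalent mechanism for upgrading reduced identities), your argument is incomplete at precisely the point you identified.
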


\begin{proof}
  \cite{Meyer-Roy-Woronowicz:Homomorphisms}*{Proposition~4.7} gives a
  unique \(\UnivRmat\in\U(A^\univ\otimes A^\univ)\) satisfying
  \eqref{eq:Univ_R_mat_and_red_R_mat}--\eqref{eq:Univ_R_matrix_char_in_second_leg}.
  The nontrivial part is to show that~\(\UnivRmat\) satisfies
  \eqref{eq:Univ_R_matrix_equivariant_cond}.  Let
  \(\maxcorep\in\U(\hat{A}\otimes A^\univ)\) be the universal
  bicharacter as in
  Appendix~\ref{sec:univ_qgr}.  Theorem~25 and Proposition~31
  in~\cite{Soltan-Woronowicz:Multiplicative_unitaries} show that
  \begin{equation}
    \label{eq:slice_of_maxcorep_and_corep_cond}
    A^\univ=\{(\omega\otimes\Id_{A^\univ})\maxcorep \mid \omega\in \hat{A}'\}^{\CLS}
    \qquad\text{and}\qquad
    (\Id_{\hat{A}}\otimes\Comult[A^\univ])\maxcorep=\maxcorep_{12}\maxcorep_{13}.
  \end{equation}
  Therefore, \eqref{eq:Univ_R_matrix_equivariant_cond} is equivalent to:
  \begin{equation}
    \label{eq:commutation_rel_univ_rmat_and_maxcorep_in_terms_of_comult}
    \UnivRmat_{23} \maxcorep_{13}\maxcorep_{12} (\UnivRmat_{23})^*
    =\maxcorep_{12}\maxcorep_{13}
    \qquad \text{in }\U(\hat{A}\otimes A^\univ\otimes A^\univ).
  \end{equation}
  The unitary \(\overline{\Rmat} \defeq
  (\Lambda\otimes\Id_{A^\univ})\UnivRmat \in\Mult(A\otimes A^\univ)\)
  is also a bicharacter.  Let \(X \defeq
  \multunit[*]_{12}\overline{\Rmat}_{23}\maxcorep_{13}\multunit_{12} \in
  \U(\hat{A}\otimes A\otimes A^\univ)\).  The following computation
  shows that~\(X\) is a corepresentation of~\((A^\univ,\Comult[A^\univ])\) on 
  \(\hat{A}\otimes A\):
  \begin{multline*}
    (\Id_{\hat{A}}\otimes\Id_A\otimes\Comult[A^\univ])
    (\multunit[*]_{12}\overline{\Rmat}_{23}\maxcorep_{13}\multunit_{12})
    = \multunit[*]_{12}\overline{\Rmat}_{23}\overline{\Rmat}_{24}
    \maxcorep_{13}\maxcorep_{14}\multunit_{12}\\
    = (\multunit[*]_{12}\overline{\Rmat}_{23}\maxcorep_{13}\multunit_{12})
    (\multunit[*]_{12}\overline{\Rmat}_{24}\maxcorep_{14}\multunit_{12})
    = X_{123}X_{124}.
  \end{multline*}
  The first step uses~\eqref{eq:Univ_R_matrix_char_in_second_leg} and
  \eqref{eq:slice_of_maxcorep_and_corep_cond}, the second step uses
  that \(\overline{\Rmat}_{24}\) and~\(\maxcorep_{13}\) commute, and
  the last step is trivial.  A similar routine computation shows that
  \(Y \defeq \maxcorep_{13}\overline{\Rmat}_{23}\in \U(\hat{A}\otimes
  A\otimes A^\univ)\) satisfies
  \((\Id_{\hat{A}}\otimes\Id_A\otimes\Comult[A^\univ])Y =
  Y_{123}Y_{124}\).

  The argument that shows
  that~\eqref{eq:Univ_R_matrix_equivariant_cond} is equivalent
  to~\eqref{eq:commutation_rel_univ_rmat_and_maxcorep_in_terms_of_comult}
  also shows that the \Rmattxt\nb-matrix
  condition~\eqref{eq:reduced_R_mat_equivariant_cond} is equivalent to
  \begin{equation}
    \label{eq:equiv_red_R_mat_equivariant_cond}
    \Rmat_{23}\multunit_{13}\multunit_{12} =\multunit_{12}\multunit_{13}\Rmat_{23}
    \qquad\text{in }\U(\hat{A}\otimes A\otimes A).
  \end{equation}
  Thus \((\Id_{\hat{A}}\otimes\Id_A\otimes\Lambda)X =
  (\Id_{\hat{A}}\otimes\Id_A\otimes\Lambda)Y\).  Now we use Lemma
  \cite{Meyer-Roy-Woronowicz:Homomorphisms}*{Lemma 4.6}, which is a
  variation on \cite{Kustermans:LCQG_universal}*{Result 6.1}.  It
  gives \(X=Y\) or, equivalently,
  \begin{equation}
    \label{eq:equality_of_coreps_in_third_leg}
    \maxcorep[*]_{13} \multunit[*]_{12}\bar{\Rmat}_{23}\maxcorep_{13}
    = \bar{\Rmat}_{23} \multunit[*]_{12}
    \qquad \text{in }\U(\hat{A}\otimes A\otimes A^\univ).
  \end{equation}
  Similarly, \(\widetilde{X}\defeq \maxcorep[*]_{13}
  (\UnivRmat_{23})^* \maxcorep_{12} \maxcorep_{13}\) and
  \(\widetilde{Y}\defeq \maxcorep_{12} (\UnivRmat_{23})^*\) in
  \(\U(\hat{A}\otimes A^\univ\otimes A^\univ)\) satisfy
  \((\Id_{\hat{A}}\otimes\Lambda\otimes\Id_{A^\univ})\widetilde{X} =
  (\Id_{\hat{A}}\otimes\Lambda\otimes\Id_{A^\univ})\widetilde{Y}\)
  by~\eqref{eq:equality_of_coreps_in_third_leg}, and
  \[
  (\Id_{\hat{A}}\otimes\Comult[A^\univ]\otimes\Id_{A^\univ})(\widetilde{X})
  = \widetilde{X}_{124} \widetilde{X}_{134},\qquad
  (\Id_{\hat{A}}\otimes\Comult[A^\univ]\otimes\Id_{A^\univ})(\widetilde{Y})
  = \widetilde{Y}_{124} \widetilde{Y}_{134}
  \]
  because of \eqref{eq:Univ_R_matrix_char_in_first_leg}
  and~\eqref{eq:slice_of_maxcorep_and_corep_cond}.  Another
  application of \cite{Meyer-Roy-Woronowicz:Homomorphisms}*{Lemma 4.6}
  gives \(\widetilde{X}=\widetilde{Y}\), which is equivalent
  to~\eqref{eq:commutation_rel_univ_rmat_and_maxcorep_in_terms_of_comult}.
\end{proof}

\cite{Soltan-Woronowicz:Multiplicative_unitaries}*{Proposition~31.2}
shows that~\((A^\univ,\Comult[A^\univ])\) has a bounded counit: there
is a unique morphism \(e\colon A^\univ\to\C\) with
\begin{equation}
  \label{eq:bdd_counit_and_univ_comult}
  (e\otimes\Id_{A^\univ})\Comult[A^\univ]
  = (\Id_{A^\univ}\otimes e)\Comult[A^\univ]
  = \Id_{A^\univ}.
\end{equation}

\begin{lemma}
  \label{lemm:prop_of_univ_Rmat}
  The unitary~\(\UnivRmat\in\U(A^\univ\otimes A^\univ)\) in
  Proposition~\textup{\ref{prop:univ_R_matrix}} satisfies
  \begin{alignat}{2}
    \label{eq:Univ_R_matrix_and_counit}
    (e\otimes\Id_{A^\univ})\UnivRmat
    &= (\Id_{A^\univ}\otimes e)\UnivRmat = 1_{A^\univ}
    &\qquad&\text{in }\U(A^\univ),\\
    \label{eq:R_matrix_Yang_Baxter}
    \UnivRmat_{12}\UnivRmat_{13}\UnivRmat_{23} &=
    \UnivRmat_{23}\UnivRmat_{13}\UnivRmat_{12}
    &\qquad&\text{in }\U(A^\univ\otimes A^\univ\otimes A^\univ).
  \end{alignat}
\end{lemma}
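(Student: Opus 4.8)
The plan is to derive both relations formally from the properties already in hand for~\(\UnivRmat\): the two bicharacter identities \eqref{eq:Univ_R_matrix_char_in_first_leg}--\eqref{eq:Univ_R_matrix_char_in_second_leg}, the equivariance identity \eqref{eq:Univ_R_matrix_equivariant_cond}, and the counit property \eqref{eq:bdd_counit_and_univ_comult} of~\(A^\univ\). No further input from multiplicative unitaries should be needed; the whole computation is the standard Hopf-algebra argument, translated into slice-map form.

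For the counit relations \eqref{eq:Univ_R_matrix_and_counit} I would slice \eqref{eq:Univ_R_matrix_char_in_first_leg} by~\(e\) in the first leg. Applying \(e\otimes\Id_{A^\univ}\otimes\Id_{A^\univ}\) to \((\Comult[A^\univ]\otimes\Id_{A^\univ})\UnivRmat = \UnivRmat_{23}\UnivRmat_{13}\) and using \((e\otimes\Id_{A^\univ})\Comult[A^\univ]=\Id_{A^\univ}\) collapses the left-hand side to~\(\UnivRmat\), while on the right-hand side \(\UnivRmat_{23}\) is unaffected and \(\UnivRmat_{13}\) becomes \(1\otimes r\) with \(r\defeq(e\otimes\Id_{A^\univ})\UnivRmat\); thus \(\UnivRmat=\UnivRmat\cdot(1\otimes r)\), and since~\(\UnivRmat\) is unitary, \(r=1_{A^\univ}\). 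The identity \((\Id_{A^\univ}\otimes e)\UnivRmat=1_{A^\univ}\) is obtained symmetrically by slicing \eqref{eq:Univ_R_matrix_char_in_second_leg} with~\(e\) in the middle leg.

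For the Yang--Baxter relation \eqref{eq:R_matrix_Yang_Baxter} I would start from the left-hand side and rewrite its last two factors. Swapping the first two legs in \eqref{eq:Univ_R_matrix_char_in_first_leg} gives \((\flip\circ\Comult[A^\univ]\otimes\Id_{A^\univ})\UnivRmat=\UnivRmat_{13}\UnivRmat_{23}\), so \(\UnivRmat_{12}\UnivRmat_{13}\UnivRmat_{23}=\UnivRmat_{12}\cdot(\flip\circ\Comult[A^\univ]\otimes\Id_{A^\univ})\UnivRmat\). The equivariance identity \eqref{eq:Univ_R_matrix_equivariant_cond}, applied to the first tensor leg of~\(\UnivRmat\), then moves \(\UnivRmat_{12}\) to the right while turning \(\flip\circ\Comult[A^\univ]\) into \(\Comult[A^\univ]\), yielding \((\Comult[A^\univ]\otimes\Id_{A^\univ})\UnivRmat\cdot\UnivRmat_{12}\); a last application of \eqref{eq:Univ_R_matrix_char_in_first_leg} rewrites this as \(\UnivRmat_{23}\UnivRmat_{13}\UnivRmat_{12}\), which is the right-hand side.

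The one point that needs care, and which I expect to be the main technical obstacle, is making the leg-wise use of \eqref{eq:Univ_R_matrix_equivariant_cond} rigorous in the multiplier setting: one must justify \(\UnivRmat_{12}\,(\flip\circ\Comult[A^\univ]\otimes\Id_{A^\univ})(U)\,\UnivRmat_{12}^*=(\Comult[A^\univ]\otimes\Id_{A^\univ})(U)\) for \(U\in\Mult(A^\univ\otimes A^\univ)\) and then specialize to \(U=\UnivRmat\). Both sides are values of nondegenerate morphisms \(A^\univ\otimes A^\univ\to\Mult(A^\univ\otimes A^\univ\otimes A^\univ)\) that coincide on elementary tensors \(a\otimes b\) by \eqref{eq:Univ_R_matrix_equivariant_cond}; hence they agree on \(A^\univ\otimes A^\univ\) and extend to \(\Mult(A^\univ\otimes A^\univ)\) by strict continuity, which is exactly the kind of slicing already used in the proof of Proposition~\ref{prop:univ_R_matrix}. (If one prefers to avoid this, the same computation can be carried out one level down, working with~\(\multunit\) in place of~\(\maxcorep\) and using the reduced equivariance identity \eqref{eq:equiv_red_R_mat_equivariant_cond}.)
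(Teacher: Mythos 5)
Your proposal is correct and follows essentially the same route as the paper: the counit identities are obtained by slicing the two bicharacter relations \eqref{eq:Univ_R_matrix_char_in_first_leg}--\eqref{eq:Univ_R_matrix_char_in_second_leg} with the counit and cancelling the unitary~\(\UnivRmat\), and the Yang--Baxter equation follows from one bicharacter relation, the equivariance identity \eqref{eq:Univ_R_matrix_equivariant_cond} applied leg-wise, and the bicharacter relation again (the paper runs this chain through the second leg via \eqref{eq:Univ_R_matrix_char_in_second_leg}, you through the first leg via \eqref{eq:Univ_R_matrix_char_in_first_leg}, which is only a cosmetic difference). Your extra remark on extending the equivariance identity to multipliers by agreement of two nondegenerate morphisms on \(A^\univ\otimes A^\univ\) is exactly the justification the paper leaves implicit.
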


\begin{proof}
  Apply \(\Id_{A^\univ}\otimes e\otimes\Id_{A^\univ}\) on both
  sides of~\eqref{eq:Univ_R_matrix_char_in_first_leg}
  and~\eqref{eq:Univ_R_matrix_char_in_second_leg} and then
  use~\eqref{eq:bdd_counit_and_univ_comult}.  This gives
  \[
  \UnivRmat
  = (1_{A^\univ}\otimes (e\otimes\Id_{A^\univ})\UnivRmat) \UnivRmat
  = (((\Id_{A^\univ}\otimes e) \UnivRmat) \otimes 1_{A^\univ})\UnivRmat.
  \]
  Multiplying with~\((\UnivRmat)^*\) on the right gives
  \((e\otimes\Id_{A^\univ})\UnivRmat = (\Id_{A^\univ}\otimes
  e)\UnivRmat=1_{A^\univ}\).

  The following computation yields~\eqref{eq:R_matrix_Yang_Baxter}:
  \[
  \UnivRmat_{12}\UnivRmat_{13}\UnivRmat_{23}
  = ((\Id_{A^\univ}\otimes\Comult^\univ)\UnivRmat)\UnivRmat_{23}
  = \UnivRmat_{23}((\Id_{A^\univ}\otimes\flip\circ\Comult^\univ)\UnivRmat)
  = \UnivRmat_{23} \UnivRmat_{13} \UnivRmat_{12};
  \]
  here the first and third step
  use~\eqref{eq:Univ_R_matrix_char_in_second_leg} and the second step
  uses~\eqref{eq:Univ_R_matrix_equivariant_cond}.
\end{proof}

\section{Corepresentation categories of quasitriangular quantum groups}
\label{sec:corepcat_qasi_triag_qntgrp}

\begin{definition}
  \label{def:quasi_triang}
  A \emph{quasitriangular} \(\Cst\)\nb-quantum group is a
  \(\Cst\)\nb-quantum group \(\Qgrp{G}{A}\) with an \Rmattxt\nb-matrix
  \(\Rmat\in\U(\hat{A}\otimes\hat{A})\).
\end{definition}

Let \(\Flip^{(\Hils_1,\Hils_2)}\colon
\Hils_1\otimes\Hils_2\to\Hils_2\otimes\Hils_1\) denote the flip
operator.  As already pointed out
in~\cite{Soltan-Woronowicz:Multiplicative_unitaries},
\(\Flip^{(\Hils_1,\Hils_2)}_{12}\) is \(\G\)\nb-equivariant for all
corepresentations of~\(\G\) if and only if~\(\G\) is commutative.
Hence~\(\Flip^{(\cdot,\cdot)}\) does not give a braiding
on~\(\Corepcat(\G)\) in general.

Let \(\corep{U}^{\Hils_i}\in\U(\Comp(\Hils_i)\otimes A)\) be
corepresentations of~\(\G\) on~\(\Hils_i\) for \(i=1,2\).  These
correspond to representations of the universal quantum
group~\(\hat{A}^\univ\) by the universal property
of~\(\hat{A}^\univ\).  More precisely, there are unique
\(\hat{\varphi}_i\in\Mor(\hat{A}^\univ,\Comp(\Hils_i))\) such that
\((\hat{\varphi}_i\otimes\Id_A)\dumaxcorep=\corep{U}^{\Hils_i}\) for
\(i=1,2\), see Appendix~\ref{sec:univ_qgr}; here~\(\dumaxcorep[A]\)
is the universal bicharacter in \(\U(\hat{A}^\univ\otimes A)\).

Define \(\Braiding{\Hils_1}{\Hils_2}
\colon\Hils_1\otimes\Hils_2\to\Hils_2\otimes\Hils_1\) by
\begin{alignat}{2}
  \label{eq:Yang_baxter_unit_corep}
  \Rmatbraid{\Hils_2}{\Hils_1} &\defeq
  (\hat{\varphi}_2\otimes\hat{\varphi}_1)(\UnivRmat)^*
  &\quad&\text{in }\U(\Hils_2\otimes\Hils_1),\\
  \label{eq:braiding_induced_by_R-matrix}
  \Braiding{\Hils_1}{\Hils_2} &\defeq
  \Rmatbraid{\Hils_2}{\Hils_1}\circ\Flip^{\Hils_1,\Hils_2}
  &\quad&\text{in }\U(\Hils_1\otimes\Hils_2,\Hils_2\otimes\Hils_1).
\end{alignat}
Here \(\UnivRmat\in\U(\hat{A}^\univ\otimes\hat{A}^\univ)\) is as in
Proposition~\ref{prop:univ_R_matrix}.

\begin{proposition}
  \label{prop:braiding_quasi_qgrp_and_R_matrix}
  The unitaries \(\Braiding{\Hils_1}{\Hils_2}\colon
  \Hils_1\otimes\Hils_2\to\Hils_2\otimes\Hils_1\) are
  \(\G\)\nb-equivariant, that is,
  \begin{align}
    \label{eq:braiding_equivariant}
    \Braiding{\Hils_1}{\Hils_2}_{12}(\corep{U}^{\Hils_1}\tenscorep \corep{U}^{\Hils_2})
    &=(\corep{U}^{\Hils_2}\tenscorep \corep{U}^{\Hils_1})\Braiding{\Hils_1}{\Hils_2}_{12}
    \qquad \text{in }\U(\Comp(\Hils_1\otimes\Hils_2)\otimes A)
  \end{align}
  for all \(\corep{U}^{\Hils_1},\corep{U}^{\Hils_2}\in\Corepcat(\G)\).
  The tensor product~\(\tenscorep\) is defined
  in~\eqref{eq:tens_corep}.

  The unitaries \(\Braiding{\Hils_1}{\Hils_2}\) define a braiding on
  \(\Corepcat(\G)\), that is, the following hexagons commute for all
  \(\corep{U}^{\Hils_i}\in\Corepcat(\G)\), \(i=1,2,3\):
  \begin{gather}
    \label{eq:first_braid_diag}
    \begin{tikzpicture}[scale=3,baseline=(current bounding box.west)]
      \node (m-1-2) at (.4,.5)
      {$\Hils_1\otimes(\Hils_2\otimes\Hils_3)$};
      \node (m-1-3) at (2.1,.5)
      {$(\Hils_2\otimes\Hils_3)\otimes\Hils_1$};
      \node (m-2-1) at (0,0)
      {$(\Hils_1\otimes\Hils_2)\otimes\Hils_3$};
      \node (m-2-4) at (2.5,0)
      {$\Hils_2\otimes(\Hils_3\otimes\Hils_1)$};
      \node (m-3-2) at (.4,-.5)
      {$(\Hils_2\otimes\Hils_1)\otimes\Hils_3$};
      \node (m-3-3) at (2.1,-.5)
      {$\Hils_2\otimes(\Hils_1\otimes\Hils_3)$};
      \draw[cdar] (m-2-1) -- (m-1-2);
      \draw[cdar] (m-1-2) -- node
      {$\Braiding{\Hils_1}{\Hils_2\otimes\Hils_3}$} (m-1-3);
      \draw[cdar] (m-1-3) -- (m-2-4);
      \draw[cdar] (m-2-1) -- node[swap]
      {$\Braiding{\Hils_1}{\Hils_2}\otimes\Id_{\Hils_3}$}
      (m-3-2);
      \draw[cdar] (m-3-2) --
      node[swap] {\phantom{$\Braiding{\Hils_1\otimes\Hils_2}{\Hils_3}$}}
      (m-3-3);
      \draw[cdar] (m-3-3) -- node[swap]
      {$\Id_{\Hils_2}\otimes \Braiding{\Hils_1}{\Hils_3}$}
      (m-2-4);
    \end{tikzpicture}
    \\
    \label{eq:second_braid_diag}
    \begin{tikzpicture}[scale=3,baseline=(current bounding box.west)]
      \node (m-1-2) at (.4,.5)
      {$(\Hils_1\otimes\Hils_2)\otimes\Hils_3$};
      \node (m-1-3) at (2.1,.5)
      {$\Hils_3\otimes(\Hils_1\otimes\Hils_2)$};
      \node (m-2-1) at (0,0)
      {$\Hils_1\otimes(\Hils_2\otimes\Hils_3)$};
      \node (m-2-4) at (2.5,0)
      {$(\Hils_3\otimes\Hils_1)\otimes\Hils_2$};
      \node (m-3-2) at (.4,-.5)
      {$\Hils_1\otimes(\Hils_3\otimes\Hils_2)$};
      \node (m-3-3) at (2.1,-.5)
      {$(\Hils_1\otimes\Hils_3)\otimes\Hils_2$};
      \draw[cdar] (m-2-1) -- (m-1-2);
      \draw[cdar] (m-1-2) -- node
      {$\Braiding{\Hils_1\otimes\Hils_2}{\Hils_3}$} (m-1-3);
      \draw[cdar] (m-1-3) -- (m-2-4);
      \draw[cdar] (m-2-1) -- node[swap]
      {$\Id_{\Hils_1}\otimes \Braiding{\Hils_2}{\Hils_3}$}
      (m-3-2);
      \draw[cdar] (m-3-2) --
      node[swap] {\phantom{$\Braiding{\Hils_1\otimes\Hils_2}{\Hils_3}$}}
      (m-3-3);
      \draw[cdar] (m-3-3) -- node[swap]
      {$\Braiding{\Hils_1}{\Hils_3}\otimes\Id_{\Hils_2}$}
      (m-2-4);
    \end{tikzpicture}
  \end{gather}
  Here the unlabelled arrows are the standard associators of Hilbert spaces.
\end{proposition}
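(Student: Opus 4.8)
The plan is to transport both statements to the universal dual quantum group~\(\hat{A}^\univ\) and to deduce them from the properties of~\(\UnivRmat\in\U(\hat{A}^\univ\otimes\hat{A}^\univ)\) recorded in Proposition~\ref{prop:univ_R_matrix}, read with \(A\) replaced by~\(\hat A\): the intertwining relation~\eqref{eq:Univ_R_matrix_equivariant_cond} and the two one-leg characterisations~\eqref{eq:Univ_R_matrix_char_in_first_leg} and~\eqref{eq:Univ_R_matrix_char_in_second_leg}.  The only extra input is the (known) monoidality of the correspondence between corepresentations of~\(\G\) and representations of~\(\hat{A}^\univ\): with \(\corep{U}^{\Hils_i}=(\hat\varphi_i\otimes\Id_A)\dumaxcorep\), the tensor product corepresentation \(\corep{U}^{\Hils_1}\tenscorep\corep{U}^{\Hils_2}\) on \(\Hils_1\otimes\Hils_2\) corresponds to \((\hat\varphi_1\otimes\hat\varphi_2)\circ\DuComult[A^\univ]^\copp\), and the flip \(\Flip^{\Hils_1,\Hils_2}\colon\Hils_1\otimes\Hils_2\to\Hils_2\otimes\Hils_1\) intertwines it with the corepresentation of \(\Hils_2\otimes\Hils_1\) corresponding to \((\hat\varphi_2\otimes\hat\varphi_1)\circ\DuComult[A^\univ]\).  (With the opposite normalisation of~\(\tenscorep\) one interchanges \(\DuComult[A^\univ]\) and \(\DuComult[A^\univ]^\copp\) throughout; this does not affect the argument.)

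For the equivariance~\eqref{eq:braiding_equivariant}, pre-composing with \(\Flip^{\Hils_1,\Hils_2}\) reduces the claim to showing that the unitary \(\Rmatbraid{\Hils_2}{\Hils_1}=(\hat\varphi_2\otimes\hat\varphi_1)(\UnivRmat)^*\) intertwines the corepresentation of \(\Hils_2\otimes\Hils_1\) attached to \((\hat\varphi_2\otimes\hat\varphi_1)\circ\DuComult[A^\univ]\) with \(\corep{U}^{\Hils_2}\tenscorep\corep{U}^{\Hils_1}\), which is attached to \((\hat\varphi_2\otimes\hat\varphi_1)\circ\DuComult[A^\univ]^\copp\).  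Now~\eqref{eq:Univ_R_matrix_equivariant_cond} for~\(\hat{A}^\univ\) reads \((\UnivRmat)^*\,\DuComult[A^\univ](\hat a)\,\UnivRmat=\flip\circ\DuComult[A^\univ](\hat a)\) for \(\hat a\in\hat{A}^\univ\); applying the (nondegenerate, hence multiplier-extending) morphism \(\hat\varphi_2\otimes\hat\varphi_1\) yields exactly this intertwining relation at the level of representations, and therefore at the level of corepresentations.  Composing the two intertwiners gives~\eqref{eq:braiding_equivariant}.

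For the two hexagons I would argue entirely with the unitaries \(\Rmatbraid{\cdot}{\cdot}\), the flips and the Hilbert-space associators, the latter being identity maps once parentheses are dropped.  Using that \(\corep{U}^{\Hils_2}\tenscorep\corep{U}^{\Hils_3}\) is attached to \((\hat\varphi_2\otimes\hat\varphi_3)\circ\DuComult[A^\univ]^\copp\) and applying~\eqref{eq:Univ_R_matrix_char_in_first_leg} to the first leg of~\(\UnivRmat\), one obtains a factorisation of \(\Rmatbraid{\Hils_2\otimes\Hils_3}{\Hils_1}\) into \((\Rmatbraid{\Hils_2}{\Hils_1})_{13}\) and \((\Rmatbraid{\Hils_3}{\Hils_1})_{23}\) in \(\U(\Hils_2\otimes\Hils_3\otimes\Hils_1)\).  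Substituting into \(\Braiding{\Hils_1}{\Hils_2\otimes\Hils_3}=\Rmatbraid{\Hils_2\otimes\Hils_3}{\Hils_1}\circ\Flip^{\Hils_1,\Hils_2\otimes\Hils_3}\) and chasing the flips around~\eqref{eq:first_braid_diag} gives the first hexagon.  Symmetrically, \(\corep{U}^{\Hils_1}\tenscorep\corep{U}^{\Hils_2}\) is attached to \((\hat\varphi_1\otimes\hat\varphi_2)\circ\DuComult[A^\univ]^\copp\), and~\eqref{eq:Univ_R_matrix_char_in_second_leg} gives a factorisation of \(\Rmatbraid{\Hils_3}{\Hils_1\otimes\Hils_2}\) into \((\Rmatbraid{\Hils_3}{\Hils_1})_{12}\) and \((\Rmatbraid{\Hils_3}{\Hils_2})_{13}\) in \(\U(\Hils_3\otimes\Hils_1\otimes\Hils_2)\), which yields~\eqref{eq:second_braid_diag}.

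Everything beyond these two factorisations is routine leg-numbering bookkeeping, and I expect this to be the only real obstacle: keeping the conventions consistent so that the adjoint and the order of the~\(\hat\varphi_i\) in the definition of~\(\Rmatbraid{\cdot}{\cdot}\) match the intertwining relation coming from~\eqref{eq:Univ_R_matrix_equivariant_cond}, and so that the flips compose correctly around the hexagons.  Neither the Yang--Baxter equation~\eqref{eq:R_matrix_Yang_Baxter} nor the counit relations from Lemma~\ref{lemm:prop_of_univ_Rmat} are needed; the whole proposition rests on Proposition~\ref{prop:univ_R_matrix} together with the monoidality of the corepresentation/representation correspondence.
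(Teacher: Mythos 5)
Your proposal is correct and follows essentially the same route as the paper: the paper also identifies \(\corep{U}^{\Hils_1}\tenscorep\corep{U}^{\Hils_2}\) with the representation \((\hat\varphi_1\otimes\hat\varphi_2)\circ\flip\circ\DuComult[A^\univ]\) of \(\hat{A}^\univ\), derives~\eqref{eq:braiding_equivariant} from~\eqref{eq:Univ_R_matrix_equivariant_cond}, and obtains the two hexagons from the factorisations of \(\Rmatbraid{\Hils_2\otimes\Hils_3}{\Hils_1}\) and \(\Rmatbraid{\Hils_3}{\Hils_1\otimes\Hils_2}\) given by \eqref{eq:Univ_R_matrix_char_in_first_leg} and~\eqref{eq:Univ_R_matrix_char_in_second_leg}, with the same flip bookkeeping you describe (and likewise without invoking the Yang--Baxter or counit relations).
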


\begin{proof}
  We have \((\DuComult[A^\univ]\otimes\Id_A)\dumaxcorep[A] =
  \dumaxcorep[A]_{23}\dumaxcorep[A]_{13}\) in
  \(\U(\hat{A}^\univ\otimes\hat{A}^\univ\otimes A)\)
  because~\(\dumaxcorep[A]\) is a character in the first leg.
  Therefore, the corepresentation \(\corep{U}^{\Hils_1}\tenscorep
  \corep{U}^{\Hils_2}\) corresponds to
  \((\hat{\phi}_1\otimes\hat{\phi}_2)\circ
  \sigma\circ\DuComult[A]^\univ\colon\hat{A}^\univ\to
  \Bound(\Hils_1\otimes\Hils_2)\) through the universal
  property~\eqref{eq:univ_prop_dumaxcprep} of~\(\dumaxcorep\):
  \begin{equation}
    \label{eq:univ_dual_rep_tens_corep}
    ((\hat{\phi}_1\otimes\hat{\phi}_2)\circ\sigma\circ
    \DuComult[A]^\univ\otimes\Id_A)\dumaxcorep
    = \corep{U}^{\Hils_1}\tenscorep \corep{U}^{\Hils_2}.
  \end{equation}
  The following computation yields~\eqref{eq:braiding_equivariant}:
  \begin{align*}
    \Braiding{\Hils_1}{\Hils_2}_{12}(\corep{U}^{\Hils_1}\tenscorep \corep{U}^{\Hils_2})
    &= (\hat{\phi}_2\otimes\hat{\phi}_1\otimes\Id_A)\bigl((\UnivRmat_{12})^*(\DuComult[A]^\univ\otimes\Id_A)\dumaxcorep\bigr)
    \Flip^{(\Hils_1,\Hils_2)}_{12}\\
    &= \bigl(((\hat{\phi}_2\otimes\hat{\phi}_1)\circ\flip\circ\DuComult[A]^\univ)\otimes\Id_A)\dumaxcorep\bigr)
    \Rmatbraid{\Hils_2}{\Hils_1}_{12}\Flip^{(\Hils_1,\Hils_2)}_{12}\\
    &= (\corep{U}^{\Hils_2}\tenscorep \corep{U}^{\Hils_1})\Braiding{\Hils_1}{\Hils_2}_{12}.
  \end{align*}
  The first equality uses~\eqref{eq:univ_dual_rep_tens_corep} and \eqref{eq:braiding_induced_by_R-matrix},
  the second equality follows from~\eqref{eq:Univ_R_matrix_equivariant_cond} and \eqref{eq:Yang_baxter_unit_corep},
  and the last equality uses~\eqref{eq:univ_dual_rep_tens_corep} and \eqref{eq:braiding_induced_by_R-matrix}.

  Equations \eqref{eq:braiding_induced_by_R-matrix}
  and~\eqref{eq:univ_dual_rep_tens_corep} imply
  \begin{multline}
    \label{eq:R-mat_braid_1_23}
    \Braiding{\Hils_1}{\Hils_2\otimes\Hils_3}
    \defeq\Rmatbraid{\Hils_2\otimes\Hils_3}{\Hils_1}\Flip^{(\Hils_1,\Hils_2\otimes\Hils_3)}\\
    =      \Bigl((\hat{\phi}_2\otimes\hat{\phi}_3\otimes\hat{\phi}_1)
    (\flip\circ\DuComult[A]^\univ\otimes\Id_{\hat{A}^\univ})(\UnivRmat)^*\Bigr)
    \circ \Flip^{(\Hils_1,\Hils_2\otimes\Hils_3)}.
  \end{multline}
  Now we check the first braiding diagram~\eqref{eq:first_braid_diag}:
  \begin{multline*}
    \Bigl((\hat{\phi}_2\otimes\hat{\phi}_3\otimes\hat{\phi}_1)
    (\flip\circ\DuComult[A]^\univ\otimes\Id_{\hat{A}^\univ})(\UnivRmat)^*\Bigr)
    \circ \Flip^{(\Hils_1,\Hils_2\otimes\Hils_3)}\\
    = \Bigl((\hat{\phi}_2\otimes\hat{\phi}_3\otimes\hat{\phi}_1)\bigl((\UnivRmat)^*_{23}(\UnivRmat)^*_{13}\bigr)\Bigr)
    \Flip^{(\Hils_1,\Hils_3)}_{23}\Flip^{(\Hils_1,\Hils_2)}_{12}\\
    = \Rmatbraid{\Hils_3}{\Hils_1}_{23}\Rmatbraid{\Hils_2}{\Hils_1}_{13}\Flip^{(\Hils_1,\Hils_3)}_{23}\Flip^{(\Hils_1,\Hils_2)}_{12}\\
    = \Rmatbraid{\Hils_3}{\Hils_1}_{23}\Flip^{(\Hils_1,\Hils_3)}_{23}\Rmatbraid{\Hils_2}{\Hils_1}_{12}\Flip^{(\Hils_1,\Hils_2)}_{12}
    = \Braiding{\Hils_3}{\Hils_1}_{23}\Braiding{\Hils_2}{\Hils_1}_{12};
  \end{multline*}
  here the first equality
  uses~\eqref{eq:Univ_R_matrix_char_in_first_leg}, the second equality
  uses~\eqref{eq:Yang_baxter_unit_corep}, the third equality uses
  properties of the flip operator~\(\Flip\), and the fourth equality
  follows from~\eqref{eq:braiding_induced_by_R-matrix}.

  A similar computation
  for~\(\Braiding{\Hils_1\otimes\Hils_2}{\Hils_3}\) yields the second
  braiding diagram~\eqref{eq:second_braid_diag}.
\end{proof}

\begin{corollary}
  \label{cor:Yang-Baxter_rmat_braid}
  If \(\C\) carries the trivial corepresentation of~\(\G\),
  then
  \[
  \Braiding{\C}{\Hils}\colon \C\otimes\Hils\to\Hils\otimes\C
  \qquad\text{and}\qquad
  \Braiding{\Hils}{\C}\colon
  \Hils\otimes\C\to\C\otimes\Hils
  \]
  are the canonical isomorphisms.  For any three
  corepresentations of~\(\G\),
  \begin{equation}
    \label{eq:braiding_coherence}
    \Braiding{\Hils_1}{\Hils_2}_{23}
    \Braiding{\Hils_1}{\Hils_3}_{12}
    \Braiding{\Hils_2}{\Hils_3}_{23}
    = \Braiding{\Hils_2}{\Hils_3}_{12}
    \Braiding{\Hils_1}{\Hils_3}_{23}
    \Braiding{\Hils_1}{\Hils_2}_{12}.
  \end{equation}
\end{corollary}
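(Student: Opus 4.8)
The plan is to derive the two assertions separately, each from the properties of~\(\UnivRmat\) collected in Lemma~\ref{lemm:prop_of_univ_Rmat} (applied to~\(\hat{A}^\univ\)).

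For the first assertion, recall that the trivial corepresentation of~\(\G\) on~\(\C\) is \(1_A\in\U(A)\), and that it corresponds, under the universal property of~\(\hat{A}^\univ\), to the counit \(\hat{e}\colon\hat{A}^\univ\to\C\) of~\(\hat{A}^\univ\) (which exists by the analogue for~\(\hat{A}^\univ\) of~\eqref{eq:bdd_counit_and_univ_comult}); indeed, slicing the universal bicharacter~\(\dumaxcorep\) by~\(\hat{e}\) in its first leg yields~\(1_A\). So if \(\Hils_1=\C\) carries the trivial corepresentation, then \(\hat{\varphi}_1=\hat{e}\) in~\eqref{eq:Yang_baxter_unit_corep}, and
\[
\Rmatbraid{\Hils_2}{\Hils_1}
= (\hat{\varphi}_2\otimes\hat{e})(\UnivRmat)^*
= \bigl(\hat{\varphi}_2((\Id_{\hat{A}^\univ}\otimes\hat{e})\UnivRmat)\bigr)^*
= 1_{\Hils_2}
\]
by~\eqref{eq:Univ_R_matrix_and_counit}; hence \(\Braiding{\C}{\Hils_2}=\Flip^{\C,\Hils_2}\) by~\eqref{eq:braiding_induced_by_R-matrix}. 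Taking instead \(\Hils_2=\C\) with \(\hat{\varphi}_2=\hat{e}\), and using \((\hat{e}\otimes\Id_{\hat{A}^\univ})\UnivRmat=1\) from~\eqref{eq:Univ_R_matrix_and_counit}, gives \(\Rmatbraid{\C}{\Hils_1}=1_{\Hils_1}\), so \(\Braiding{\Hils_1}{\C}=\Flip^{\Hils_1,\C}\). These flips are the canonical isomorphisms (the unit constraints of the tensor product of Hilbert spaces).

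For~\eqref{eq:braiding_coherence} I would apply \(\hat{\varphi}_1\otimes\hat{\varphi}_2\otimes\hat{\varphi}_3\) to the adjoint of the Yang--Baxter equation~\eqref{eq:R_matrix_Yang_Baxter} for~\(\UnivRmat\). Since \((\hat{\varphi}_i\otimes\hat{\varphi}_j)(\UnivRmat)^*=\Rmatbraid{\Hils_i}{\Hils_j}\) by~\eqref{eq:Yang_baxter_unit_corep}, this produces the identity
\[
\Rmatbraid{\Hils_2}{\Hils_3}_{23}\,\Rmatbraid{\Hils_1}{\Hils_3}_{13}\,\Rmatbraid{\Hils_1}{\Hils_2}_{12}
= \Rmatbraid{\Hils_1}{\Hils_2}_{12}\,\Rmatbraid{\Hils_1}{\Hils_3}_{13}\,\Rmatbraid{\Hils_2}{\Hils_3}_{23}
\qquad\text{in }\U(\Hils_1\otimes\Hils_2\otimes\Hils_3).
\]
Inserting flip operators and commuting them past the \(\Rmatbraid{}{}\)'s — the same bookkeeping as in the step ``using properties of the flip operator~\(\Flip\)'' in the verification of~\eqref{eq:first_braid_diag} in the proof of Proposition~\ref{prop:braiding_quasi_qgrp_and_R_matrix} — and then applying~\eqref{eq:braiding_induced_by_R-matrix} rewrites this identity as~\eqref{eq:braiding_coherence}.

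Alternatively, and perhaps more transparently, \eqref{eq:braiding_coherence} is just the hexagon/Yang--Baxter (braid) relation, which holds formally in any braided monoidal category: it follows from the hexagon~\eqref{eq:second_braid_diag} together with the naturality of the braiding, the latter being precisely the equivariance~\eqref{eq:braiding_equivariant}. Concretely, one applies naturality to the morphism \(\Braiding{\Hils_1}{\Hils_2}\otimes\Id_{\Hils_3}\) and expands both \(\Braiding{\Hils_1\otimes\Hils_2}{\Hils_3}\) and \(\Braiding{\Hils_2\otimes\Hils_1}{\Hils_3}\) via~\eqref{eq:second_braid_diag}; the two resulting composites are the two sides of~\eqref{eq:braiding_coherence}. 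I do not expect a genuine obstacle: both parts are formal. The one thing that needs care is the leg-numbering and flip bookkeeping in the first route (respectively, the tacit use of the Hilbert-space associators in the second), in particular fixing the orientation conventions so that the image of~\eqref{eq:R_matrix_Yang_Baxter} under the representations is~\eqref{eq:braiding_coherence} and not its inverse.
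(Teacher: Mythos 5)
Your proof is correct and is essentially the paper's own argument: the paper disposes of the corollary in two sentences, citing the general theory of braided monoidal categories (Joyal--Street) and noting that both claims also follow from \eqref{eq:Univ_R_matrix_and_counit}, \eqref{eq:R_matrix_Yang_Baxter} and \eqref{eq:Yang_baxter_unit_corep}, which is exactly the computation (counit kills the relevant leg of \(\UnivRmat\); apply \(\hat\varphi_1\otimes\hat\varphi_2\otimes\hat\varphi_3\) to the Yang--Baxter relation and shuffle flips) that you carry out in detail.
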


\begin{proof}
  These are general properties of braided monoidal categories, see
  \cite{Joyal-Street:Braided}*{Proposition 2.1}.  They also follow
  from \eqref{eq:Univ_R_matrix_and_counit},
  \eqref{eq:R_matrix_Yang_Baxter},
  and~\eqref{eq:Yang_baxter_unit_corep}.
\end{proof}

\begin{remark}
  \label{rem:Braiding_induced_by_dual_Rmat}
  The dual \(\hat{\Rmat}\defeq \flip(\Rmat^*)\) of an
  \Rmattxt\nb-matrix \(\Rmat\in\U(\hat{A}\otimes\hat{A})\) is again an
  \Rmattxt\nb-matrix by Lemma~\ref{lemm:dual_R_mat}.  A routine
  computation shows that the resulting braiding on~\(\Corepcat(\G)\)
  is the dual braiding, given by the braiding unitaries
  \[
  \Dualbraiding{\Hils_1}{\Hils_2} =
  \bigl(\Braiding{\Hils_2}{\Hils_1}\bigr)^*\colon
  \Hils_1\otimes\Hils_2\to \Hils_2\otimes\Hils_1.
  \]
\end{remark}

\subsection{Symmetric braidings}
\label{sec:symmetric}

\begin{definition}
  \label{def:antisymmetric_R-matrix}
  An \Rmattxt\nb-matrix \(\Rmat\in\U(A \otimes A)\) is called
  \emph{antisymmetric} if \(\Rmat^*=\sigma(\Rmat)\) for the flip
  \(\sigma\colon A\otimes A\to A\otimes A\), \(a_1\otimes a_2\mapsto
  a_2\otimes a_1\).
\end{definition}

\begin{lemma}
  \label{lem:lift_antisymmetric}
  If~\(\Rmat\) is antisymmetric, then
  \((\UnivRmat)^*=\sigma(\UnivRmat)\) for the universal lift
  \(\UnivRmat\in\U(A^\univ\otimes A^\univ)\) constructed in
  Proposition~\textup{\ref{prop:univ_R_matrix}}.
\end{lemma}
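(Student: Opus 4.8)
The plan is to invoke the uniqueness statement of Proposition~\ref{prop:univ_R_matrix}: since \(\UnivRmat\in\U(A^\univ\otimes A^\univ)\) is the \emph{only} unitary satisfying \eqref{eq:Univ_R_mat_and_red_R_mat}, \eqref{eq:Univ_R_matrix_char_in_first_leg} and \eqref{eq:Univ_R_matrix_char_in_second_leg}, it suffices to check that the unitary \(V\defeq\sigma\bigl((\UnivRmat)^*\bigr)\) satisfies the same three conditions; for then \(V=\UnivRmat\) by uniqueness, and — because \(\sigma^2=\Id\) — this is exactly the asserted identity \((\UnivRmat)^*=\sigma(\UnivRmat)\). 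So the whole lemma reduces to verifying \eqref{eq:Univ_R_mat_and_red_R_mat}--\eqref{eq:Univ_R_matrix_char_in_second_leg} for~\(V\).

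For~\eqref{eq:Univ_R_mat_and_red_R_mat} I would use that \(\Lambda\otimes\Lambda\) is a \Star{}homomorphism commuting with the flip, so \((\Lambda\otimes\Lambda)V=\sigma\bigl(((\Lambda\otimes\Lambda)\UnivRmat)^*\bigr)=\sigma(\Rmat^*)\), and antisymmetry of~\(\Rmat\) says precisely \(\sigma(\Rmat^*)=\Rmat\). For the two character conditions, I would first take adjoints of \eqref{eq:Univ_R_matrix_char_in_first_leg} and \eqref{eq:Univ_R_matrix_char_in_second_leg}; using \((XY)^*=Y^*X^*\) this produces the ``character in each leg'' identities for \((\UnivRmat)^*\) with the order of the two factors reversed, namely \((\Comult[A^\univ]\otimes\Id_{A^\univ})(\UnivRmat)^*=(\UnivRmat)^*_{13}(\UnivRmat)^*_{23}\) and \((\Id_{A^\univ}\otimes\Comult[A^\univ])(\UnivRmat)^*=(\UnivRmat)^*_{13}(\UnivRmat)^*_{12}\). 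Applying the flip in the two tensor legs of \((\UnivRmat)^*\) (equivalently, conjugating by the appropriate permutation of the three legs) interchanges ``character in the first leg'' with ``character in the second leg''; combined with the reversal already caused by the adjoint, the first identity turns into \((\Comult[A^\univ]\otimes\Id_{A^\univ})V=V_{23}V_{13}\) and the second into \((\Id_{A^\univ}\otimes\Comult[A^\univ])V=V_{12}V_{13}\), i.e.\ exactly \eqref{eq:Univ_R_matrix_char_in_first_leg} and \eqref{eq:Univ_R_matrix_char_in_second_leg} for~\(V\). Uniqueness then gives \(V=\UnivRmat\).

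The only step needing care is this last piece of leg bookkeeping — keeping track of which slot each leg of \((\UnivRmat)^*\) lands in when one simultaneously flips its two tensor factors and reverses the order of the product in the comultiplied leg. There is nothing deep here: it is the same computation showing that the dual of a bicharacter is a bicharacter, and it can be packaged by observing that \(\sigma\bigl((\UnivRmat)^*\bigr)\) is the universal lift, in the sense of Proposition~\ref{prop:univ_R_matrix}, of the dual \Rmattxt\nb-matrix \(\sigma(\Rmat^*)\) of~\(\Rmat\) (compare Remark~\ref{rem:Braiding_induced_by_dual_Rmat}). Since antisymmetry means \(\sigma(\Rmat^*)=\Rmat\), uniqueness of universal lifts then yields the claim immediately.
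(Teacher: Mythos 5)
Your argument is correct and is essentially the paper's proof: the paper also observes that both \(\sigma\bigl((\UnivRmat)^*\bigr)\) and \(\UnivRmat\) are bicharacters lifting \(\Rmat=\sigma(\Rmat^*)\) and concludes by uniqueness of lifts (\cite{Meyer-Roy-Woronowicz:Homomorphisms}*{Proposition 4.7}, which underlies the uniqueness in Proposition~\ref{prop:univ_R_matrix}). You merely spell out the leg bookkeeping showing that the flipped adjoint is again a bicharacter, which the paper leaves implicit, and that bookkeeping is done correctly.
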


\begin{proof}
  Both \(\sigma(\UnivRmat)^*\) and~\(\UnivRmat\) are bicharacters that
  lift~\(\Rmat\).  They must be equal because bicharacters lift
  uniquely by~\cite{Meyer-Roy-Woronowicz:Homomorphisms}*{Proposition
    4.7}.
\end{proof}

\begin{proposition}
  \label{pro:symmetric_braiding}
  The braiding on \(\Corepcat(\G)\) constructed from
  \(\Rmat\in\U(\hat{A}\otimes\hat{A})\) is symmetric if and only
  if~\(\Rmat\) is antisymmetric.
\end{proposition}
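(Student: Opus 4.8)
The plan is to express the composite \(\Braiding{\Hils_2}{\Hils_1}\circ\Braiding{\Hils_1}{\Hils_2}\) through the universal \(\Rmat\)-matrix \(\UnivRmat\in\U(\hat A^\univ\otimes\hat A^\univ)\) and then to use that corepresentations of~\(\G\) detect all of~\(\hat A^\univ\). Recall that the braiding is \emph{symmetric} precisely when \(\Braiding{\Hils_2}{\Hils_1}\circ\Braiding{\Hils_1}{\Hils_2}=\Id_{\Hils_1\otimes\Hils_2}\) for all \(\corep{U}^{\Hils_1},\corep{U}^{\Hils_2}\in\Corepcat(\G)\). Starting from~\eqref{eq:Yang_baxter_unit_corep} and~\eqref{eq:braiding_induced_by_R-matrix} and using, as in the proof of Proposition~\ref{prop:braiding_quasi_qgrp_and_R_matrix}, that conjugating an operator on \(\Hils_2\otimes\Hils_1\) by the tensor-flip exchanges its two legs, a short computation yields
\[
  \Braiding{\Hils_2}{\Hils_1}\circ\Braiding{\Hils_1}{\Hils_2}
  = (\hat\varphi_1\otimes\hat\varphi_2)\bigl((\flip(\UnivRmat)\,\UnivRmat)^*\bigr)
  \qquad\text{in }\U(\Hils_1\otimes\Hils_2),
\]
where \(\flip\) denotes the flip of \(\hat A^\univ\otimes\hat A^\univ\). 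Taking adjoints, the braiding is symmetric if and only if \((\hat\varphi_1\otimes\hat\varphi_2)\bigl(\flip(\UnivRmat)\,\UnivRmat\bigr)=1\) for all corepresentations \(\corep{U}^{\Hils_1},\corep{U}^{\Hils_2}\) of~\(\G\).

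Next I would remove the corepresentations. By the universal property of~\(\hat A^\univ\) (Appendix~\ref{sec:univ_qgr}), corepresentations of~\(\G\) on a Hilbert space~\(\Hils\) are precisely the nondegenerate representations \(\hat\varphi\colon\hat A^\univ\to\Bound(\Hils)\), i.e.\ the elements of \(\Mor(\hat A^\univ,\Comp(\Hils))\). Fix a faithful nondegenerate representation~\(\iota\) of~\(\hat A^\univ\) (e.g.\ its universal representation); it corresponds to a corepresentation of~\(\G\), and \(\iota\otimes\iota\) is faithful on the minimal \(\Cst\)-tensor product \(\hat A^\univ\otimes\hat A^\univ\) by the standard property of the minimal tensor product. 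Specialising the criterion above to \(\hat\varphi_1=\hat\varphi_2=\iota\) therefore shows that the braiding is symmetric if and only if \(\flip(\UnivRmat)\,\UnivRmat=1\), equivalently \(\flip(\UnivRmat)=(\UnivRmat)^*\) by unitarity of~\(\UnivRmat\); that is, if and only if~\(\UnivRmat\) is antisymmetric.

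It remains to transfer antisymmetry between \(\UnivRmat\) and~\(\Rmat\). If~\(\Rmat\) is antisymmetric, then so is~\(\UnivRmat\) by Lemma~\ref{lem:lift_antisymmetric}. Conversely, if \(\flip(\UnivRmat)=(\UnivRmat)^*\), then applying the reduction morphism \(\hat A^\univ\to\hat A\) to both tensor factors, which commutes with the flip, and using~\eqref{eq:Univ_R_mat_and_red_R_mat} gives \(\flip(\Rmat)=\Rmat^*\). Chaining these equivalences proves the proposition. I expect the only mildly delicate point to be the flip bookkeeping in the displayed computation; the appeal to faithfulness of \(\iota\otimes\iota\) on the minimal tensor product is standard, and the rest is routine given the results of Section~\ref{sec:R-matrices}.
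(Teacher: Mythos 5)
Your proof is correct and follows essentially the same route as the paper: express \(\Braiding{\Hils_2}{\Hils_1}\circ\Braiding{\Hils_1}{\Hils_2}\) as \((\hat\varphi_1\otimes\hat\varphi_2)\bigl((\flip(\UnivRmat)\UnivRmat)^*\bigr)\) and conclude that symmetry of the braiding is equivalent to \(\flip(\UnivRmat)\UnivRmat=1\). The only difference is that you spell out two points the paper leaves implicit, namely the use of a faithful representation (with faithfulness of \(\iota\otimes\iota\) on the minimal tensor product and its extension to multipliers) and the transfer of antisymmetry between \(\UnivRmat\) and \(\Rmat\) via Lemma~\ref{lem:lift_antisymmetric} and the reducing maps; both are fine.
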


\begin{proof}
  Let \(\Hils_1\) and~\(\Hils_2\) be Hilbert spaces with
  corepresentations of~\(\G\).  Let \(\hat{\phi}_i\colon
  \hat{A}^\univ\to\Bound(\Hils_i)\) be the corresponding
  \Star{}representations.  Then
  \[
  \Hils_1\otimes\Hils_2
  \xrightarrow{\Braiding{\Hils_1}{\Hils_2}}
  \Hils_2\otimes\Hils_1
  \xrightarrow{\Braiding{\Hils_2}{\Hils_1}}
  \Hils_1\otimes\Hils_2
  \]
  is equal to
  \[
  (\hat{\phi}_1\otimes\hat{\phi}_2)(\UnivRmat)^*
  \circ \Flip^{(\Hils_2,\Hils_1)} \circ
  (\hat{\phi}_2\otimes\hat{\phi}_1)(\UnivRmat)^* \circ
  \Flip^{(\Hils_1,\Hils_2)} = (\hat{\phi}_1\otimes\hat{\phi}_2)
  (\flip(\UnivRmat)\UnivRmat)^*.
  \]
  This is the identity operator for all
  representations~\(\hat{\phi}_i\) if and only if
  \(\sigma(\UnivRmat)\UnivRmat = 1\).
\end{proof}

\subsection{The Abelian case}
\label{sec:braiding_Abelian}

Let~\(B\) be a locally compact group.  What is an \Rmattxt\nb-matrix
for the commutative quantum group \((\Contvin(G),\Comult)\)?  Since
\(\Contvin(G)\otimes\Contvin(G)\) is commutative as well,
\eqref{eq:reduced_R_mat_equivariant_cond} simplifies to the condition
\(\sigma\circ\Comult=\Comult\), which is equivalent to~\(G\) being
commutative.  Hence there is no \Rmattxt\nb-matrix unless~\(G\) is
Abelian, which we assume from now on.
Then~\eqref{eq:reduced_R_mat_equivariant_cond} holds for any unitary
\(\Rmat\in\U(\Contvin(G)\otimes\Contvin(G))\), so an
\Rmattxt\nb-matrix for~\(\G\) is simply a bicharacter.  Equivalently,
\(\Rmat\) is a function \(\rho\colon G\times G\to \Un(1)\) satisfying
\(\rho(xy,z)= \rho(x,z)\rho(y,z)\) and \(\rho(x,yz)=
\rho(x,y)\rho(x,z)\).  Being antisymmetric means
\(\rho(x,y)\rho(y,x)=1\) for all \(x,y\in G\).

Any bicharacter~\(\rho\) as above is of the form \(\rho(x,y) =
\langle\hat{\rho}(x),y\rangle\) for a group homomorphism
\(\hat{\rho}\colon G\to \hat{G}\) to the Pontrjagin dual~\(\hat{G}\),
with \(\rho(x,\blank)=\hat{\rho}\).  This is a special case of the
interpretation of bicharacters as quantum group homomorphisms
in~\cite{Meyer-Roy-Woronowicz:Homomorphisms}.

The category of Hilbert space representations of~\(G\) is equivalent
to the category of corepresentations of \((\Contvin(G),\Delta)\) and
to the category of representations of \(\Cst(G) \cong
\Contvin(\hat{G})\).  The tensor category of
\(G\)\nb-representations is already symmetric for the obvious
braiding~\(\Sigma\), which corresponds to the
\Rmattxt\nb-matrix~\(1\).  What are the braiding operators for a
nontrivial \Rmattxt\nb-matrix?

Let \(\int^\oplus_{\hat{G}} \Hils_x \,\diff\mu(x)\) denote the Hilbert
space of \(L^2\)\nb-sections of a measurable field of Hilbert
spaces~\((\Hils_x)_{x\in\hat{G}}\) over~\(\hat{G}\) with respect to a
measure~\(\mu\), equipped with the action of \(\Contvin(\hat{G})\) by
pointwise multiplication.  Any representation of~\(\Contvin(\hat{G})\)
is of this form, where~\(\mu\) is unique up to measure equivalence and
the field~\((\Hils_x)\) is unique up to isomorphism \(\mu\)\nb-almost
everywhere.  Let \(\Hils_1 = \int^\oplus_{\hat{G}} \Hils_{1_x}
\,\diff\mu_1(x)\) and \(\Hils_2 = \int^\oplus_{\hat{G}} \Hils_{2_x}
\,\diff\mu_2(x)\) be two Hilbert space representations of~\(G\).  Then
\[
\Hils_1 \otimes \Hils_2 =
\int^\oplus_{\hat{G}\times\hat{G}} \Hils_{1_x}\otimes\Hils_{2_y}
\,\diff\mu_1(x)\,\diff\mu_2(y)
\]
with \(\Contvin(\hat{G}) \otimes \Contvin(\hat{G}) \cong
\Contvin(\hat{G}\times\hat{G})\) acting by pointwise multiplication.
The braiding \(\Braiding{\Hils_1}{\Hils_2}\) maps an \(L^2\)\nb-section
\((\xi_{x,y})_{x,y}\) of the field
\((\Hils_{1_x}\otimes\Hils_{2_y})_{x,y}\) to the section
\((y,x)\mapsto \rho(y,x)^{-1}\xi_{x,y}\) of
\((\Hils_{2_y}\otimes\Hils_{1_x})_{y,x}\).

\begin{example}
  \label{exa:Ztwo}
  Consider \(G=\Z/2=\{\pm1\}\) and let \(\rho(x,y) = xy \in \Z/2
  \subseteq \Un(1)\); this bicharacter corresponds to the isomorphism
  \(G\cong\hat{G}\).  It is both symmetric and antisymmetric.  The
  spectral analysis above writes a \(\Z/2\)-Hilbert space as a
  \(\Z/2\)-graded Hilbert space, splitting it into even and odd
  elements with respect to the action of the generator of~\(\Z/2\).
  The braiding unitary on \(\xi\otimes\eta\) is~\(\Sigma\) if \(\xi\)
  or~\(\eta\) is even, and~\(-\Sigma\) if both \(\xi\) and~\(\eta\)
  are odd.  This is the usual Koszul sign rule.
\end{example}

\section{Coaction categories of quasitriangular quantum groups}
\label{sec:Braided_tens_Cst}

Let \(\G=(A,\Comult[A],\Rmat)\) be a quasitriangular quantum group.
Let \((C,\gamma)\) and \((D,\delta)\) be \(\G\)\nb-\(\Cst\)-algebras.
The twisted tensor product \(C\boxtimes_{\Rmat}D = C\boxtimes D\) is
constructed in~\cite{Meyer-Roy-Woronowicz:Twisted_tensor}.  It is a
crossed product of \(C\) and~\(D\), that is, there are canonical
morphisms \(\iota_C\colon C\to C \boxtimes D\) and \(\iota_D\colon
D\to C \boxtimes D\) with
\[
\iota_C(C)\cdot \iota_D(D) =
\iota_D(D)\cdot \iota_C(C) =
C \boxtimes D;
\]
here a \emph{morphism} is a nondegenerate \Star{}homomorphism to the
multiplier algebra and \(X\cdot Y\) for two subspaces \(X\) and~\(Y\)
of a \(\Cst\)\nb-algebra means the \emph{closed linear span} of
\(x\cdot y\) for \(x\in X\), \(y\in Y\) as
in~\cite{Meyer-Roy-Woronowicz:Twisted_tensor}.

Theorem~\ref{the:crossed_prod} recalls one of the two equivalent
definitions of the twisted tensor product
in~\cite{Meyer-Roy-Woronowicz:Twisted_tensor}.  Let
\((\varphi,\corep{U}^{\Hils})\) and \((\psi,\corep{U}^{\Hils[K]})\) be
faithful covariant representations of \((C,\gamma)\) and
\((D,\delta)\) on Hilbert spaces \(\Hils\) and~\(\Hils[K]\),
respectively.  Then \(C \boxtimes_{\Rmat}D\) is canonically isomorphic
to \(\phi_1(C)\cdot \tilde\psi_2(D)\subseteq
\Bound(\Hils\otimes\Hils[K])\), where \(\phi_1(c)=\phi(c)\otimes
1_{\Hils[K]}\) and \(\tilde\psi_2(d) = X(1_{\Hils}\otimes\psi(d))X^*\)
for the unitary~\(X\) that is characterised
by~\eqref{eq:commutation_between_corep_of_two_quantum_groups}.  The
same unitary appears in our construction of the braiding, so
\begin{equation}
  \label{eq:tilde_psi_via_braiding}
  \tilde\psi_2(d)
  = \Braiding{\Hils}{\Hils[K]} (\psi(d)\otimes1_{\Hils})
  (\Braiding{\Hils}{\Hils[K]})^*.
\end{equation}

We are going to equip \(C \boxtimes D\) with a natural
\(\G\)\nb-coaction and show that this tensor product gives a monoidal
structure on the category of
\(\G\)\nb-\(\Cst\)-algebras~\(\Cstcat(\G)\) (see
Definition~\ref{def:cont_coaction}).

\begin{proposition}
  \label{pro:braide_tens_presv_cov_corep}
  There is a unique \(\G\)\nb-coaction \(\gamma\bowtie_\Rmat\delta\)
  on~\(C\boxtimes_\Rmat D\) such that the canonical representation
  on~\(\Hils\otimes\Hils[K]\) and the corepresentation
  \(\corep{U}^{\Hils}\tenscorep \corep{U}^{\Hils[K]}\) form a
  covariant representation of \((C\boxtimes_\Rmat
  D,\gamma\bowtie_\Rmat\delta)\).  This coaction is also the unique
  one for which the morphisms \(\iota_C\colon C\to C\boxtimes_\Rmat
  D\) and \(\iota_D\colon D\to C\boxtimes_\Rmat D\) are
  \(\G\)\nb-equivariant.
\end{proposition}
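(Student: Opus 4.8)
The plan is to construct the coaction explicitly on the concrete picture of $C\boxtimes_\Rmat D$ as $\phi_1(C)\cdot\tilde\psi_2(D)\subseteq\Bound(\Hils\otimes\Hils[K])$, using the covariant representation that is given to us. The pair $(\phi,\corep{U}^{\Hils})$ being covariant means $(\phi\otimes\Id_A)\gamma(c)=\corep{U}^{\Hils}(\phi(c)\otimes1_A)(\corep{U}^{\Hils})^*$, and similarly for $(\psi,\corep{U}^{\Hils[K]})$. The natural candidate coaction is implemented by conjugation with $(\corep{U}^{\Hils}\tenscorep\corep{U}^{\Hils[K]})$: define
\[
(\gamma\bowtie_\Rmat\delta)(x)\defeq(\corep{U}^{\Hils}\tenscorep\corep{U}^{\Hils[K]})(x\otimes1_A)(\corep{U}^{\Hils}\tenscorep\corep{U}^{\Hils[K]})^*
\]
for $x\in C\boxtimes_\Rmat D$. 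First I would verify that this lands in $\Mult((C\boxtimes_\Rmat D)\otimes A)$: on generators of the form $\phi_1(c)$ it equals $((\phi\otimes\Id_A)\gamma(c))_{13}$ by covariance of $(\phi,\corep{U}^{\Hils})$ (the second leg of $\corep{U}^{\Hils[K]}$ acts trivially on $\phi_1(c)$, and $\corep{U}^{\Hils}$ lives in legs $1,3$), which manifestly lies in $\iota_C(C)\otimes A\subseteq\Mult((C\boxtimes_\Rmat D)\otimes A)$. The key computation is to show the same for $\tilde\psi_2(d)$. Here I would use~\eqref{eq:tilde_psi_via_braiding} together with the $\G$-equivariance of the braiding, Proposition~\ref{prop:braiding_quasi_qgrp_and_R_matrix}, equation~\eqref{eq:braiding_equivariant}: conjugating $\tilde\psi_2(d)=\Braiding{\Hils}{\Hils[K]}(\psi(d)\otimes1)(\Braiding{\Hils}{\Hils[K]})^*$ by $(\corep{U}^{\Hils}\tenscorep\corep{U}^{\Hils[K]})$ and moving the braiding through using~\eqref{eq:braiding_equivariant} turns it into $\Braiding{\Hils}{\Hils[K]}((\psi\otimes\Id_A)\delta(d))_{13}(\Braiding{\Hils}{\Hils[K]})^*$ — and by~\eqref{eq:tilde_psi_via_braiding} applied leg-wise this is $(\tilde\psi_2\otimes\Id_A)$ applied to $\delta(d)$, which lies in $\iota_D(D)\otimes A$. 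Since $C\boxtimes_\Rmat D=\iota_C(C)\cdot\iota_D(D)$ and $\gamma\bowtie_\Rmat\delta$ is a $*$-homomorphism, it follows that it maps $C\boxtimes_\Rmat D$ into $\Mult((C\boxtimes_\Rmat D)\otimes A)$; nondegeneracy follows because $\gamma,\delta$ are nondegenerate and products of the two ranges are dense.

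Next I would check the coaction identity $(\Id\otimes\Comult[A])\circ(\gamma\bowtie_\Rmat\delta)=(\gamma\bowtie_\Rmat\delta\otimes\Id_A)\circ(\gamma\bowtie_\Rmat\delta)$ and the Podle\'s/continuity condition. The coaction identity reduces, via the implementing picture, to the fact that $\corep{U}^{\Hils}\tenscorep\corep{U}^{\Hils[K]}$ is itself a corepresentation of $\G$, i.e.\ satisfies $(\Id\otimes\Comult[A])(\corep{U}^{\Hils}\tenscorep\corep{U}^{\Hils[K]})=(\corep{U}^{\Hils}\tenscorep\corep{U}^{\Hils[K]})_{12?}(\corep{U}^{\Hils}\tenscorep\corep{U}^{\Hils[K]})_{13?}$ in the appropriate legs — this is exactly~\eqref{eq:univ_dual_rep_tens_corep} together with the defining property of $\tenscorep$ in~\eqref{eq:tens_corep}, so the identity for the conjugation coaction is automatic. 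Continuity ($\CLS$ of $(\gamma\bowtie_\Rmat\delta)(C\boxtimes_\Rmat D)\cdot(1\otimes A)=(C\boxtimes_\Rmat D)\otimes A$) I would verify by combining the continuity of $\gamma$ and of $\delta$: writing $x=\phi_1(c)\tilde\psi_2(d)$ and using the identities just established, $(\gamma\bowtie_\Rmat\delta)(x)(1\otimes a)$ spans, upon slicing, everything of the form $\phi_1(c')\tilde\psi_2(d')\otimes a'$, which is dense.

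For $\G$-equivariance of $\iota_C$ and $\iota_D$: equivariance of $\iota_C$ says $(\iota_C\otimes\Id_A)\gamma=(\gamma\bowtie_\Rmat\delta)\circ\iota_C$, and by the computation above the right-hand side applied to $c$ is $((\phi\otimes\Id_A)\gamma(c))_{13}$ interpreted inside $\Mult((C\boxtimes_\Rmat D)\otimes A)$, which is precisely $(\iota_C\otimes\Id_A)\gamma(c)$ since $\iota_C$ is identified with $\phi_1$; similarly for $\iota_D$ using the $\tilde\psi_2$ computation. For uniqueness I would argue that any $\G$-coaction making the given covariant representation covariant must agree with the conjugation formula on $\Bound(\Hils\otimes\Hils[K])$-elements of the relevant form (because a covariant representation determines the coaction on the image of the algebra, the corepresentation $\corep{U}^{\Hils}\tenscorep\corep{U}^{\Hils[K]}$ being fixed), hence is unique; and any coaction making $\iota_C,\iota_D$ equivariant is determined on $\iota_C(C)$ and $\iota_D(D)$ and hence, being multiplicative, on their product $C\boxtimes_\Rmat D$. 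Finally one shows the two uniqueness statements are about the same coaction by checking the conjugation coaction satisfies both characterizations, which the above already does. \textbf{Main obstacle.} The crux is the $\tilde\psi_2$ computation: showing that conjugating $\tilde\psi_2(D)$ by $\corep{U}^{\Hils}\tenscorep\corep{U}^{\Hils[K]}$ keeps it inside $\iota_D(D)\otimes A$. This is exactly where the $\Rmat$-matrix condition enters, through the $\G$-equivariance of the braiding~\eqref{eq:braiding_equivariant}; without unitarity and the bicharacter/equivariance properties of $\Rmat$ established in Section~\ref{sec:R-matrices}, the twist $\tilde\psi_2$ would fail to be compatible with the diagonal corepresentation, and indeed this is precisely the failure of naive diagonal coactions on ordinary tensor products that the whole construction is designed to fix.
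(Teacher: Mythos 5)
Your proposal is correct and follows essentially the same route as the paper: the coaction is forced to be conjugation by \(\corep{U}^{\Hils}\tenscorep\corep{U}^{\Hils[K]}\), covariance of \(\iota_C\) is immediate, covariance of \(\tilde\psi_2\) is exactly the \(\G\)\nb-equivariance of the braiding unitary from Proposition~\ref{prop:braiding_quasi_qgrp_and_R_matrix}, the Podle\'s condition is deduced from those of \(\gamma\) and~\(\delta\), and uniqueness follows from \(C\boxtimes_\Rmat D=\iota_C(C)\cdot\iota_D(D)\). Your identification of the \(\tilde\psi_2\) step as the crux, where the \(\Rmat\)\nb-matrix enters, matches the paper's proof precisely.
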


\begin{proof}
  We identify \(C\boxtimes D\) with its image in
  \(\Bound(\Hils\otimes\Hils[K])\).  The covariance of this
  representation of~\(C\boxtimes D\) with
  \(\corep{U}^{\Hils}\tenscorep \corep{U}^{\Hils[K]}\) means that
  \[
  (\gamma\bowtie_\Rmat\delta)(x) =
  (\corep{U}^{\Hils}\tenscorep \corep{U}^{\Hils[K]}) (x\otimes 1_A)
  (\corep{U}^{\Hils}\tenscorep \corep{U}^{\Hils[K]})^*
  \]
  for all \(x\in C\boxtimes D\).  Hence there is at most one such
  coaction~\(\gamma\bowtie_\Rmat\delta\).

  The representation \(c\mapsto \iota_C(c) = \phi(c)\otimes
  1_{\Hils[K]}\) is covariant with respect to
  \(\corep{U}^{\Hils}\tenscorep \corep{U}^{\Hils[K]}\) because it is
  covariant with respect to \(\corep{U}^{\Hils}\otimes1\) by
  construction and~\(\iota_C(c)\) acts only on the first leg.  Hence
  \((\gamma\bowtie_\Rmat\delta)(\iota_C(c)) = (\iota_C\otimes
  \Id_A)\gamma(c)\) for all \(c\in C\).  Similarly, the representation
  \(d\mapsto \psi(d)\otimes1\) on \(\Hils[K]\otimes\Hils\) is
  covariant with respect to \(\corep{U}^{\Hils[K]}\tenscorep
  \corep{U}^{\Hils}\).  Since the
  unitary~\(\Braiding{\Hils}{\Hils[K]}\) is \(\G\)\nb-equivariant by
  Proposition~\ref{prop:braiding_quasi_qgrp_and_R_matrix}, the
  representation~\(\tilde\psi_2\) on \(\Hils\otimes\Hils[K]\) is
  covariant with respect to \(\corep{U}^{\Hils}\tenscorep
  \corep{U}^{\Hils[K]}\) as well (unlike the representation \(d\mapsto
  1\otimes\psi(d)\)).  Hence \((\gamma\bowtie_\Rmat\delta)(\iota_D(d))
  = (\iota_D\otimes \Id_A)\delta(d)\) for all \(d\in D\).  As a
  result, \(\gamma\bowtie_\Rmat\delta\) maps \(C\boxtimes D =
  \iota_C(C)\cdot \iota_D(D)\) nondegenerately into the multiplier
  algebra of \((C\boxtimes D)\otimes A\), and the morphisms
  \(\iota_C\) and~\(\iota_D\) are \(\G\)\nb-equivariant.

  The morphism \(\gamma\bowtie_\Rmat\delta\colon C\boxtimes D\to
  (C\boxtimes D)\otimes A\) is faithful by construction.  The Podle\'s
  condition for \(C\boxtimes D\) follows from those for \(C\)
  and~\(D\):
  \begin{multline*}
    (\gamma\bowtie_\Rmat\delta)(C\boxtimes D)\cdot (1\otimes A)
    = (\iota_C\otimes\Id_A)(\gamma(C)) \cdot
    (\iota_D\otimes\Id_A)(\delta(D)) \cdot (1\otimes A)
    \\= (\iota_C\otimes\Id_A)(\gamma(C)) \cdot (\iota_D(D)\otimes A)
    = (\iota_C\otimes\Id_A)(\gamma(C)) \cdot (1\otimes A)\cdot
    (\iota_D(D)\otimes A)
    \\= (\iota_C(C)\otimes A) \cdot (\iota_D(D)\otimes A)
    = C\boxtimes D\otimes A.
  \end{multline*}
  Thus~\(\gamma\bowtie_\Rmat\delta\) is a continuous
  \(\G\)\nb-coaction on~\(C\boxtimes D\) for which \(\iota_C\)
  and~\(\iota_D\) are equivariant.  Conversely, if \(\iota_C\)
  and~\(\iota_D\) are \(\G\)\nb-equivariant, then
  \[
  (\gamma\bowtie_\Rmat\delta)(\iota_C(c)\cdot \iota_D(d))
  = (\iota_C\otimes\Id_A)(\gamma(c))\cdot (\iota_D\otimes\Id_A)(\delta(d))
  \]
  for \(c\in C\), \(d\in D\); this determines
  \(\gamma\bowtie_\Rmat\delta\) because \(\iota_C(C)\cdot \iota_D(D) =
  C\boxtimes D\).
\end{proof}

\begin{proposition}
  \label{prop:twisted_tens_R-mat}
  The coaction~\(\gamma\bowtie_\Rmat\) on~\(C\boxtimes_\Rmat D\) is
  natural with respect to equivariant morphisms, that is, it gives a
  bifunctor \(\boxtimes_\Rmat\colon \Cstcat(\G)\times\Cstcat(\G)\to
  \Cstcat(\G)\).  It is the only natural coaction for which~\(\C\)
  with the obvious isomorphisms \(C\boxtimes\C\cong C\) and
  \(\C\boxtimes D\cong D\) is a tensor unit.
\end{proposition}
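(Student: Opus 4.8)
The plan is to get naturality straight from the way the bifunctor $\boxtimes_\Rmat$ on $\Cst$-algebras is built in \cite{Meyer-Roy-Woronowicz:Twisted_tensor}, and then to obtain uniqueness from the characterisation of $\gamma\bowtie_\Rmat\delta$ in the last sentence of Proposition~\ref{pro:braide_tens_presv_cov_corep}. For naturality, let $f\colon(C_1,\gamma_1)\to(C_2,\gamma_2)$ and $g\colon(D_1,\delta_1)\to(D_2,\delta_2)$ be equivariant morphisms. The bifunctor of \cite{Meyer-Roy-Woronowicz:Twisted_tensor} already supplies the $\Cst$-morphism $f\boxtimes g\colon C_1\boxtimes D_1\to C_2\boxtimes D_2$, characterised by $(f\boxtimes g)\circ\iota_{C_1}=\iota_{C_2}\circ f$ and $(f\boxtimes g)\circ\iota_{D_1}=\iota_{D_2}\circ g$, together with the identity and composition laws; the only thing left to verify is that $f\boxtimes g$ intertwines $\gamma_1\bowtie_\Rmat\delta_1$ with $\gamma_2\bowtie_\Rmat\delta_2$. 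Both $(\gamma_2\bowtie_\Rmat\delta_2)\circ(f\boxtimes g)$ and $\bigl((f\boxtimes g)\otimes\Id_A\bigr)\circ(\gamma_1\bowtie_\Rmat\delta_1)$ are morphisms $C_1\boxtimes D_1\to\Mult\bigl((C_2\boxtimes D_2)\otimes A\bigr)$, so it suffices to check that they agree on $\iota_{C_1}(C_1)$ and on $\iota_{D_1}(D_1)$, which together generate $C_1\boxtimes D_1$. On $\iota_{C_1}(C_1)$ both sides reduce, by the above relations for $f\boxtimes g$ and the $\G$-equivariance of $\iota_{C_1}$, $\iota_{C_2}$ and $f$, to $\bigl((\iota_{C_2}\circ f)\otimes\Id_A\bigr)\circ\gamma_1$; the computation on $\iota_{D_1}(D_1)$ is the mirror image with $D$ in place of $C$. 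The identity and composition laws then hold automatically, so $\boxtimes_\Rmat$ lifts to a bifunctor $\Cstcat(\G)\times\Cstcat(\G)\to\Cstcat(\G)$.

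For uniqueness, let $\epsilon_{(C,\gamma),(D,\delta)}$ be any family of continuous $\G$-coactions on the algebras $C\boxtimes_\Rmat D$ that is natural with respect to equivariant morphisms and for which $\C$, with its trivial coaction, is a tensor unit; concretely, the canonical isomorphisms $r_C\colon C\boxtimes\C\congto C$ and $\ell_D\colon\C\boxtimes D\congto D$ are assumed $\G$-equivariant for the coactions $\epsilon$ on their sources and $\gamma$, $\delta$ on their targets. By the last sentence of Proposition~\ref{pro:braide_tens_presv_cov_corep} it is enough to prove that $\iota_C$ and $\iota_D$ are equivariant for $\epsilon_{(C,\gamma),(D,\delta)}$, for then $\epsilon_{(C,\gamma),(D,\delta)}=\gamma\bowtie_\Rmat\delta$. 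Consider the unit morphism $u_D\colon\C\to D$, $\lambda\mapsto\lambda\cdot1_D$; it is equivariant since $\delta(1_D)=1_D\otimes1_A$. Naturality of $\iota_C$ in its second argument — a special case of the defining relations for $\boxtimes_\Rmat$ on morphisms — gives $(\Id_C\boxtimes u_D)\circ\iota_C^{\,\C}=\iota_C^{\,D}$; and because $\C$ carries the trivial coaction, $C\boxtimes_\Rmat\C=C\otimes\C$ with $\iota_C^{\,\C}=r_C^{-1}$. Hence $\iota_C^{\,D}=(\Id_C\boxtimes u_D)\circ r_C^{-1}$ is the composite of a morphism that is $\epsilon$-equivariant by naturality of $\epsilon$ and a morphism that is equivariant by the tensor-unit hypothesis, so $\iota_C^{\,D}$ is equivariant; the argument for $\iota_D$ is symmetric, with $u_C\colon\C\to C$ and $\ell_D^{-1}$ in place of $u_D$ and $r_C^{-1}$.

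The only genuinely delicate point is the bookkeeping in the uniqueness argument: one must know that under $C\boxtimes_\Rmat\C=C\otimes\C$ the embedding $\iota_C$ is $c\mapsto c\otimes1=r_C^{-1}(c)$, and that $\iota_C$ is a natural transformation in its second slot. Both are immediate from the construction of the crossed product $C\boxtimes_\Rmat D$ and of the maps $\iota_C$, $\iota_D$ in \cite{Meyer-Roy-Woronowicz:Twisted_tensor}, so no substantive obstacle remains; the whole proposition is essentially a formal consequence of Proposition~\ref{pro:braide_tens_presv_cov_corep} and of the bifunctoriality of $\boxtimes_\Rmat$ established in \cite{Meyer-Roy-Woronowicz:Twisted_tensor}.
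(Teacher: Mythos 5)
Your proof is correct and follows essentially the same route as the paper: naturality is checked on the generating images \(\iota_{C}(C)\) and \(\iota_{D}(D)\) using the defining relations of \(f\boxtimes g\) from \cite{Meyer-Roy-Woronowicz:Twisted_tensor}, and uniqueness comes from writing \(\iota_C = (\Id_C\boxtimes 1_D)\circ r_C^{-1}\) (and symmetrically for \(\iota_D\)), so that naturality plus equivariance of the unit isomorphisms force \(\iota_C,\iota_D\) to be equivariant, whence the coaction is \(\gamma\bowtie_\Rmat\delta\) by Proposition~\ref{pro:braide_tens_presv_cov_corep}. Your version merely makes explicit the identifications \(C\boxtimes_\Rmat\C = C\otimes\C\) and \(\iota_C^{\,\C}=r_C^{-1}\) that the paper leaves implicit.
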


\begin{proof}
  Two \(\G\)\nb-equivariant morphisms \(f\colon C_1\to C_2\) and
  \(g\colon D_1\to D_2\) induce a morphism \(f\boxtimes g\colon
  C_1\boxtimes D_1 \to C_2\boxtimes D_2\), which is determined
  uniquely by the conditions \((f\boxtimes g)\circ \iota_{C_1} =
  \iota_{C_2}\circ f\) and \((f\boxtimes g)\circ \iota_{D_1} =
  \iota_{D_2}\circ g\)
  (see~\cite{Meyer-Roy-Woronowicz:Twisted_tensor}*{Lemma 5.5}).  The
  coactions \(\gamma_1\bowtie_\Rmat \delta_1\) and
  \(\gamma_2\bowtie_\Rmat \delta_2\) satisfy \((\gamma_k\bowtie_\Rmat
  \delta_k)\circ \iota_{C_k} = (\iota_{C_k}\otimes \Id_A)\circ
  \gamma_k\) and \((\gamma_k\bowtie_\Rmat \delta_k)\circ \iota_{D_k} =
  (\iota_{D_k}\otimes \Id_A)\circ \delta_k\) for \(k=1,2\).  Thus
  \[
  (\gamma_2\bowtie_\Rmat \delta_2)\circ \iota_{C_2} \circ f
  = (f\boxtimes g\otimes\Id_A)\circ (\gamma_1\bowtie_\Rmat
  \delta_1)\circ \iota_{C_1}
  \]
  and similarly on~\(D_1\).  So \(f\boxtimes g\) is equivariant
  and~\(\boxtimes_\Rmat\) is a bifunctor as asserted.  The obvious
  isomorphisms \(C\boxtimes\C\cong C\) and \(\C\boxtimes D\cong D\)
  are \(\G\)\nb-equivariant and natural and satisfy the triangle axiom
  for a tensor unit in a monoidal category; so~\(\C\) with these
  isomorphisms is a unit for the tensor product~\(\boxtimes_\Rmat\)
  on~\(\Cstcat(\G)\).

  Conversely, assume that \(\gamma \boxdot \delta\) is a natural
  \(\G\)\nb-coaction on \(C\boxtimes D\) for which~\(\C\) with the
  obvious isomorphisms \(C\boxtimes\C\cong C\) and \(\C\boxtimes
  D\cong D\) is a tensor unit.  That is, these two isomorphisms are
  \(\G\)\nb-equivariant.  The unique morphisms \(1_C\colon \C\to C\)
  and \(1_D\colon \C\to D\) given by the unit multiplier are
  equivariant with respect to the trivial \(\G\)\nb-coaction
  on~\(\C\).  We have
  \(\iota_C = \Id_C\boxtimes 1_D\) and \(\iota_D= 1_C\boxtimes
  \Id_D\).  Hence \(\iota_C\) and~\(\iota_D\) are equivariant for
  \(\gamma\boxdot\delta\).  This forces \(\gamma\boxdot\delta =
  \gamma\bowtie_\Rmat \delta\).
\end{proof}

\begin{theorem}
  \label{the:Braided_coaction_Cat}
  If \(C_1,C_2,C_3\) are objects of \(\Cstcat(\G)\), then there is a
  unique isomorphism of triple crossed products \(C_1 \boxtimes
  (C_2\boxtimes C_3)\cong (C_1 \boxtimes C_2)\boxtimes C_3\), which
  is also \(\G\)\nb-equivariant.  Thus \(\Cstcat(\G)\) with the
  tensor product~\(\boxtimes_\Rmat\) is a monoidal category.
\end{theorem}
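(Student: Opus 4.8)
The plan is to build the associativity isomorphism at the level of a concrete faithful representation on a Hilbert space and then check that it descends to the twisted tensor products, using the hexagon identities for the braiding established in Proposition~\ref{prop:braiding_quasi_qgrp_and_R_matrix}. Concretely, choose faithful covariant representations $(\varphi_i,\corep{U}^{\Hils_i})$ of $(C_i,\gamma_i)$ on Hilbert spaces $\Hils_i$ for $i=1,2,3$. By Theorem~\ref{the:crossed_prod} (the recollection of~\cite{Meyer-Roy-Woronowicz:Twisted_tensor}), $C_2\boxtimes C_3$ is realised on $\Hils_2\otimes\Hils_3$ with the corepresentation $\corep{U}^{\Hils_2}\tenscorep\corep{U}^{\Hils_3}$, and iterating, both $C_1\boxtimes(C_2\boxtimes C_3)$ and $(C_1\boxtimes C_2)\boxtimes C_3$ are realised as concrete subalgebras of $\Bound(\Hils_1\otimes\Hils_2\otimes\Hils_3)$; the former uses the braiding $\Braiding{\Hils_1}{\Hils_2\otimes\Hils_3}$ to define $\widetilde{(\varphi_2\boxtimes\varphi_3)}$, the latter uses $\Braiding{\Hils_1\otimes\Hils_2}{\Hils_3}$. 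I would show these two concrete $\Cst$-algebras coincide \emph{as subalgebras of} $\Bound(\Hils_1\otimes\Hils_2\otimes\Hils_3)$, so the associator is simply the identity map under these identifications (modulo the associator of Hilbert spaces, which is canonical).

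The key computation is to unwind $\iota_{C_1}$, $\iota_{C_2}$, $\iota_{C_3}$ in each of the two realisations and check they give the same three commuting subalgebras of $\Bound(\Hils_1\otimes\Hils_2\otimes\Hils_3)$. In the realisation $(C_1\boxtimes C_2)\boxtimes C_3$, the image of $C_3$ is $\Braiding{\Hils_1\otimes\Hils_2}{\Hils_3}$-conjugated into the third leg, whereas in $C_1\boxtimes(C_2\boxtimes C_3)$ the image of $C_3$ comes from first forming $C_2\boxtimes C_3$ (so $C_3$ sits in leg $3$ conjugated by $\Braiding{\Hils_2}{\Hils_3}$) and then embedding via $\iota_{C_2\boxtimes C_3}$ which leaves legs $2,3$ untouched. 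Comparing these requires the relation $\Braiding{\Hils_1\otimes\Hils_2}{\Hils_3}=\Braiding{\Hils_1}{\Hils_3}_{13}\,\Braiding{\Hils_2}{\Hils_3}_{23}$ hidden in the second braiding hexagon~\eqref{eq:second_braid_diag}; similarly the image of $C_1$ is matched using $\Braiding{\Hils_1}{\Hils_2\otimes\Hils_3}=\Braiding{\Hils_1}{\Hils_3}_{13}\,\Braiding{\Hils_1}{\Hils_2}_{12}$ from~\eqref{eq:first_braid_diag}. Once the three generating subalgebras agree, the two triple crossed products agree because each is the closed span of products of the three in a fixed order (associativity of $\cdot$ and the crossed-product property make the order irrelevant). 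Uniqueness of the isomorphism follows from the crossed-product universal property: any isomorphism intertwining the three canonical embeddings is determined on $\iota_{C_1}(C_1)\cdot\iota_{C_2}(C_2)\cdot\iota_{C_3}(C_3)$, which is everything.

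For $\G$-equivariance, I would invoke Proposition~\ref{pro:braide_tens_presv_cov_corep}: on $\Hils_1\otimes\Hils_2\otimes\Hils_3$ the coaction $(\gamma_1\bowtie\gamma_2)\bowtie\gamma_3$ is implemented by conjugation with $(\corep{U}^{\Hils_1}\tenscorep\corep{U}^{\Hils_2})\tenscorep\corep{U}^{\Hils_3}$, while $\gamma_1\bowtie(\gamma_2\bowtie\gamma_3)$ is implemented by $\corep{U}^{\Hils_1}\tenscorep(\corep{U}^{\Hils_2}\tenscorep\corep{U}^{\Hils_3})$. These two corepresentations coincide because $\tenscorep$ is associative — this is exactly the coassociativity of $\DuComult[A^\univ]$ translated through the universal property~\eqref{eq:univ_prop_dumaxcprep}, as already used in the proof of Proposition~\ref{prop:braiding_quasi_qgrp_and_R_matrix}. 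Hence the identity map on the common concrete algebra intertwines the two coactions, so the associator is equivariant; the monoidal pentagon and triangle axioms are then the evident ones for concrete subalgebras of $\Bound$ with the canonical Hilbert-space associators (Proposition~\ref{prop:twisted_tens_R-mat} supplies the unit constraints). The main obstacle I anticipate is the bookkeeping in the first paragraph's comparison: one must be careful that the ``$\widetilde{\phantom{x}}$'' of an already-twisted embedding (e.g.\ $\widetilde{\iota_{C_2\boxtimes C_3}(\,\cdot\,)}$ sitting inside the outer $\boxtimes$) unfolds correctly, since the braiding $\Braiding{\Hils_1}{\Hils_2\otimes\Hils_3}$ acts on a tensor-product Hilbert space whose corepresentation is itself a $\tenscorep$; getting the leg numbering and the order of the two braidings right is where errors would creep in, but~\eqref{eq:R-mat_braid_1_23} and its analogue for $\Braiding{\Hils_1\otimes\Hils_2}{\Hils_3}$ give exactly the needed factorisations.
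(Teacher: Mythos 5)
Your proposal is correct and follows essentially the same route as the paper: realise both triple products faithfully on \(\Hils_1\otimes\Hils_2\otimes\Hils_3\) via Theorem~\ref{the:crossed_prod}, use the hexagon identities of Proposition~\ref{prop:braiding_quasi_qgrp_and_R_matrix} to factor \(\Braiding{\Hils_1}{\Hils_2\otimes\Hils_3}\) and \(\Braiding{\Hils_1\otimes\Hils_2}{\Hils_3}\) and see that the three embeddings coincide, get uniqueness from the generating property of the embeddings, and get equivariance and the pentagon/triangle axioms by checking on the tensor factors (equivalently, from associativity of \(\tenscorep\)). The only caveat is the leg-numbering in your stated factorisations (e.g.\ the subscript \(13\) should become \(23\) once the first braiding has moved \(\Hils_1\)), which you yourself flag and which is resolved exactly as in~\eqref{eq:R-mat_braid_1_23}.
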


\begin{proof}
  An isomorphism of triple crossed products is an isomorphism that
  intertwines the embeddings of \(C_1\), \(C_2\) and~\(C_3\).  Since
  the images of these embeddings generate the crossed product, such an
  isomorphism is unique if it exists.

  Let \((C_i,\gamma_i)\) be \(\G\)\nb-\(\Cst\)-algebras and let
  \((\varphi_i,\corep{U}^{\Hils_i})\) be faithful covariant
  representations
  of \((C_i,\gamma_i)\), respectively, for \(i=1,2,3\).  The
  construction of the \(\G\)\nb-coaction on \(C_i\boxtimes C_j\)
  shows that \((\varphi_i\boxtimes\varphi_j,
  \corep{U}^{\Hils_i}\tenscorep \corep{U}^{\Hils_j})\) is a faithful
  covariant representation of~\(C_i\boxtimes C_j\)
  on~\(\Hils_i\otimes\Hils_j\).  Therefore,
  Theorem~\ref{the:crossed_prod} gives a faithful representation
  \(\varphi_1\boxtimes (\varphi_2\boxtimes\varphi_3)\)
  of~\(C_1 \boxtimes(C_2\boxtimes C_3)\) on
  \(\Hils_1\otimes\Hils_2\otimes\Hils_3\), which is characterised by:
  \begin{alignat*}{2}
    \iota_{C_1}(c_1)
    &\mapsto \varphi_1(c_1)\otimes 1_{\Hils_2}\otimes 1_{\Hils_3},\\
    \iota_{C_2}(c_2)
    &\mapsto (\Braiding{\Hils_2\otimes\Hils_3}{\Hils_1})
    \bigl(\varphi_2(c_2)\otimes 1_{\Hils_3}\otimes 1_{\Hils_1}\bigr)
    (\Braiding{\Hils_2\otimes\Hils_3}{\Hils_1})^*,\\
    \iota_{C_3}(c_3)
    &\mapsto (\Braiding{\Hils_2\otimes\Hils_3}{\Hils_1})
    (\Braiding{\Hils_3}{\Hils_2}_{12})
    \bigl(\varphi_3(c_3)\otimes 1_{\Hils_2}\otimes 1_{\Hils_1}\bigr)
    (\Braiding{\Hils_3}{\Hils_2}_{12})^*
    (\Braiding{\Hils_2\otimes\Hils_3}{\Hils_1})^*
  \end{alignat*}
  for \(c_i\in C_i\), \(i=1,2,3\).  The diagrams in
  Proposition~\ref{prop:braiding_quasi_qgrp_and_R_matrix} and
  Corollary~\ref{cor:Yang-Baxter_rmat_braid} give
  \begin{gather*}
    \Braiding{\Hils_2\otimes\Hils_3}{\Hils_1}
    = \Braiding{\Hils_2}{\Hils_1}_{12} \cdot
    \Braiding{\Hils_3}{\Hils_1}_{23},\\
    \Braiding{\Hils_2\otimes\Hils_3}{\Hils_1}\cdot
    \Braiding{\Hils_3}{\Hils_2}_{12}
    = \Braiding{\Hils_2}{\Hils_1}_{12} \cdot
    \Braiding{\Hils_3}{\Hils_1}_{23} \cdot
    \Braiding{\Hils_3}{\Hils_2}_{12}
    = \Braiding{\Hils_3}{\Hils_1\otimes\Hils_2} \cdot
    \Braiding{\Hils_2}{\Hils_1}_{23}.
  \end{gather*}
  Hence the above characterisation of \(\varphi_1\boxtimes
  (\varphi_2\boxtimes\varphi_3)\) simplifies to
  \begin{equation}
    \label{eq:present_C1_tens_C2_C3_aux}
    \begin{alignedat}{2}
      \bigl(\varphi_1\boxtimes (\varphi_2\boxtimes\varphi_3)\bigr)
      \circ\iota_{C_1}(c_1)
      &= \varphi_1(c_1)\otimes 1,\\
      \bigl(\varphi_1\boxtimes(\varphi_2\boxtimes\varphi_3)\bigr)
      \circ\iota_{C_2}(c_2)
      &= \Braiding{\Hils_2}{\Hils_1}_{12}(\varphi_2(c_2)\otimes 1)
      (\Braiding{\Hils_2}{\Hils_1}_{12})^*,\\
      \bigl(\varphi_1\boxtimes(\varphi_2\boxtimes\varphi_3)\bigr)
      \circ\iota_{C_3}(c_3)
      &= (\Braiding{\Hils_3}{\Hils_1\otimes\Hils_2})
      (\varphi_3(c_3)\otimes 1)
      (\Braiding{\Hils_3}{\Hils_1\otimes\Hils_2})^*.
    \end{alignedat}
  \end{equation}

  Similarly, we get a faithful representation
  \((\varphi_1\boxtimes\varphi_2)\boxtimes\varphi_3\) of
  \((C_1\boxtimes C_2)\boxtimes C_3\) on~\(\Hils_1\otimes
  \Hils_2\otimes\Hils_3\).  A computation as above shows that the
  combinations of braiding unitaries in it are equal to those
  in~\eqref{eq:present_C1_tens_C2_C3_aux}.  Thus
  \(\varphi_1\boxtimes(\varphi_2\boxtimes\varphi_3)\) and
  \((\varphi_1\boxtimes\varphi_2)\boxtimes\varphi_3\) are the same
  representation on~\(\Hils_1\otimes\Hils_2\otimes\Hils_3\).  This
  gives an isomorphism \(C_1\boxtimes (C_2\boxtimes C_3) \cong
  (C_1\boxtimes C_2)\boxtimes C_3\) that intertwines the canonical
  embeddings of \(C_1\), \(C_2\) and~\(C_3\).  It is
  \(\G\)\nb-equivariant because our \(\G\)\nb-coactions are uniquely
  determined by their actions on the tensor factors \(C_1\), \(C_2\)
  and~\(C_3\).

  The natural isomorphisms above provide the associators needed for
  a monoidal category.  The pentagon condition for these associators
  and the compatibility with the unit transformations \(\C\boxtimes
  D\cong D\), \(C\boxtimes \C\cong C\) follow by checking the
  relevant commuting diagram on each tensor factor separately.
\end{proof}



\begin{proposition}
  \label{pro:braided_monoidal_Cstar}
  The monoidal structure~\(\boxtimes_\Rmat\) is braided monoidal if
  and only if it is symmetric monoidal, if and only if~\(\Rmat\) is
  antisymmetric.  In that case, the braiding is the unique
  isomorphism of crossed products \((C\boxtimes_\Rmat
  D,\iota_C,\iota_D) \cong (D\boxtimes_\Rmat C,\iota_C,\iota_D)\).
\end{proposition}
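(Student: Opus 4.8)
The plan is to prove the chain of implications: \(\Rmat\) antisymmetric \(\Rightarrow\) \(\boxtimes_\Rmat\) symmetric monoidal \(\Rightarrow\) \(\boxtimes_\Rmat\) braided monoidal \(\Rightarrow\) \(\Rmat\) antisymmetric, the middle step being trivial. The first thing I would record and reuse throughout is that \emph{any} braiding \(\{\beta_{C,D}\colon C\boxtimes_\Rmat D\to D\boxtimes_\Rmat C\}\) on \((\Cstcat(\G),\boxtimes_\Rmat)\) automatically intertwines the canonical embeddings of~\(C\) and~\(D\) into the two crossed products. Indeed, the unit inclusions \(1_C\colon\C\to C\) and \(1_D\colon\C\to D\) are equivariant for the trivial coaction on~\(\C\), and \(\iota_C=\Id_C\boxtimes 1_D\), \(\iota_D=1_C\boxtimes\Id_D\) as in the proof of Proposition~\ref{prop:twisted_tens_R-mat}; feeding these morphisms into the naturality of~\(\beta\), together with the standard fact that the braiding with the tensor unit is the canonical unit isomorphism (\cite{Joyal-Street:Braided}*{Proposition~2.1}), shows that \(\beta_{C,D}\) maps the embedding of~\(C\) (resp.~\(D\)) into \(C\boxtimes_\Rmat D\) to the embedding of~\(C\) (resp.~\(D\)) into \(D\boxtimes_\Rmat C\). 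Since the images of the embeddings generate the crossed product, \(\beta_{C,D}\) is therefore \emph{the unique} isomorphism of crossed products \((C\boxtimes_\Rmat D,\iota_C,\iota_D)\cong(D\boxtimes_\Rmat C,\iota_C,\iota_D)\), and the composite \(\beta_{D,C}\circ\beta_{C,D}\) intertwines the embeddings of \(C\boxtimes_\Rmat D\) with themselves, hence is the identity. Thus a braiding, once it exists, is automatically symmetric and is the crossed-product isomorphism named in the statement.

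For \(\Rmat\) antisymmetric \(\Rightarrow\) symmetric monoidal, I would use that the braiding on~\(\Corepcat(\G)\) is then symmetric, \(\Braiding{\Hils_2}{\Hils_1}\circ\Braiding{\Hils_1}{\Hils_2}=\Id\) (Lemma~\ref{lem:lift_antisymmetric} and Proposition~\ref{pro:symmetric_braiding}). Given \(\G\)\nb-\(\Cst\)-algebras \((C,\gamma)\), \((D,\delta)\) with faithful covariant representations \((\varphi,\corep{U}^\Hils)\) and \((\psi,\corep{U}^{\Hils[K]})\), realise \(C\boxtimes_\Rmat D\) on \(\Hils\otimes\Hils[K]\) and \(D\boxtimes_\Rmat C\) on \(\Hils[K]\otimes\Hils\) by Theorem~\ref{the:crossed_prod}, and let \(\beta_{C,D}\) be conjugation by~\(\Braiding{\Hils}{\Hils[K]}\). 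By the description~\eqref{eq:tilde_psi_via_braiding} of the twisted embeddings through the braiding unitaries, \(\beta_{C,D}\) carries the embedding of~\(C\) into \(C\boxtimes_\Rmat D\) to the embedding of~\(C\) into \(D\boxtimes_\Rmat C\), and—cancelling two braiding unitaries by symmetry—carries the embedding of~\(D\) to that of~\(D\); hence \(\beta_{C,D}\) is a well-defined isomorphism \(C\boxtimes_\Rmat D\to D\boxtimes_\Rmat C\) intertwining the embeddings. It is \(\G\)\nb-equivariant by Proposition~\ref{prop:braiding_quasi_qgrp_and_R_matrix} (equation~\eqref{eq:braiding_equivariant}), natural because equivariant morphisms of twisted tensor products are determined on the embeddings (\cite{Meyer-Roy-Woronowicz:Twisted_tensor}*{Lemma~5.5}), and satisfies the hexagon identities because of the diagrams \eqref{eq:first_braid_diag}, \eqref{eq:second_braid_diag} for~\(\Braiding{}{}\) and the associativity isomorphisms of Theorem~\ref{the:Braided_coaction_Cat}; finally \(\beta_{D,C}\circ\beta_{C,D}\) is conjugation by \(\Braiding{\Hils[K]}{\Hils}\circ\Braiding{\Hils}{\Hils[K]}=\Id\). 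So \(\boxtimes_\Rmat\) is symmetric monoidal, in particular braided.

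It remains to prove that braided (equivalently, by the first step, symmetric) implies \(\Rmat\) antisymmetric. By the first step each \(\beta_{C,D}\) is, naturally in \(C\) and~\(D\), an isomorphism of crossed products intertwining the embeddings, and I would show that the existence of such isomorphisms forces \(\Rmat=\hat\Rmat\defeq\flip(\Rmat^*)\), which is antisymmetry by Definition~\ref{def:antisymmetric_R-matrix}. On faithful covariant representations the crossed product \((C\boxtimes_\Rmat D,\iota_C,\iota_D)\) is determined by the commutation relation~\eqref{eq:commutation_between_corep_of_two_quantum_groups} between the two covariant representations, a relation built from~\(\Rmat\) and the two corepresentations; composing the given isomorphism with the flip \(\Hils[K]\otimes\Hils\to\Hils\otimes\Hils[K]\) then identifies it with the presentation of \(C\boxtimes_{\hat\Rmat}D\) whose first leg is twisted (the second of the two presentations in~\cite{Meyer-Roy-Woronowicz:Twisted_tensor}), which is governed by~\(\hat\Rmat\). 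An embedding-preserving isomorphism equates these two relations for all \(C\), \(D\) and all covariant representations; choosing them rich enough that the slices of the reduced bicharacter~\(\multunit\) that occur separate points—for instance the canonical coactions of~\(\G\) on~\(A\) and on~\(A^\univ\)—forces \(\Rmat=\hat\Rmat\). Then Proposition~\ref{pro:symmetric_braiding} applies, and by the second step the braiding coincides with conjugation by~\(\Braiding{\Hils}{\Hils[K]}\).

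The step I expect to be the main obstacle is this converse direction: turning ``\(\beta\) exists and intertwines the embeddings'' into the identity \(\Rmat=\hat\Rmat\). It needs two ingredients the first two steps do not supply. One is the identification of the flip of \(D\boxtimes_\Rmat C\) with the first-leg-twisted presentation of \(C\boxtimes_{\hat\Rmat}D\)—the twisted-tensor-product counterpart of the computation for the dual \Rmattxt\nb-matrix behind Remark~\ref{rem:Braiding_induced_by_dual_Rmat}. The other is a faithfulness statement: the commutation relation~\eqref{eq:commutation_between_corep_of_two_quantum_groups} in the twisted tensor product both determines and is determined by the bicharacter used to build it, so that equality of relations yields equality of bicharacters. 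Concretely one must rule out that \(\beta_{C,D}\) is the ``geometric'' isomorphism composed with a nontrivial corepresentation automorphism; this ambiguity is not removed by compatibility with the embedding of~\(C\) alone, and must be killed using compatibility with the embedding of~\(D\) and \(\G\)\nb-equivariance over a sufficiently large supply of corepresentations.
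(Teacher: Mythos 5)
Your opening step --- that any braiding on \((\Cstcat(\G),\boxtimes_\Rmat)\) must intertwine the canonical embeddings (via \(\iota_C=\Id_C\boxtimes 1_D\), \(\iota_D=1_C\boxtimes\Id_D\) and compatibility with the tensor unit), hence is the unique isomorphism of crossed products and is automatically symmetric --- matches the paper's argument. The genuine gap is the converse direction, which you yourself flag as the main obstacle: you never derive \(\Rmat=\hat{\Rmat}\) from the existence of an embedding-preserving isomorphism \((C\boxtimes_\Rmat D,\iota_C,\iota_D)\cong(D\boxtimes_\Rmat C,\iota_C,\iota_D)\); you only list what such a derivation would need. Both missing ingredients are in fact available and are exactly what the paper uses. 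First, \cite{Meyer-Roy-Woronowicz:Twisted_tensor}*{Proposition 5.1} gives an isomorphism of crossed products \(D\boxtimes_\Rmat C\cong C\boxtimes_{\hat{\Rmat}}D\), so a braiding yields \((C\boxtimes_{\Rmat}D,\iota_C,\iota_D)\cong(C\boxtimes_{\hat{\Rmat}}D,\iota_C,\iota_D)\). Second, the faithfulness statement you ask for is Corollary~\ref{cor:tensor_determines_bichar}: specialising to \(C=D=A\) with the coaction \(\Comult[A]\), any representation of \(A\boxtimes_{\bichar}A\) restricts along the two embeddings to a \(\bichar\)\nb-Heisenberg pair (Proposition~\ref{pro:rep_cros_vs_heis}), and the Heisenberg relation~\eqref{eq:V-Heisenberg_pair} recovers~\(\bichar\); hence the crossed product determines the bicharacter and \(\Rmat=\hat{\Rmat}\) follows. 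Your plan of ``choosing covariant representations rich enough that slices of \(\multunit\) separate points'' is the right instinct but is not a proof, and your closing worry about the braiding being the geometric isomorphism composed with a corepresentation automorphism is moot: by your own first step the braiding is already pinned down on the generators \(\iota_C(C)\cdot\iota_D(D)\), so no ambiguity remains --- what is missing is only the bicharacter rigidity just described.

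For the forward direction your route differs from the paper's and is heavier than necessary: you build the braiding by conjugating with the braiding unitaries on \(\Hils\otimes\Hils[K]\), where the claim that the embedding of \(C\) lands on the twisted embedding of \(C\) into \(D\boxtimes_\Rmat C\) again amounts to comparing the two presentations, i.e.\ to \cite{Meyer-Roy-Woronowicz:Twisted_tensor}*{Proposition 5.1} together with Lemma~\ref{lem:lift_antisymmetric}; this can be made to work, but the paper simply observes that antisymmetry gives \(\Rmat=\hat{\Rmat}\), so that proposition directly yields the crossed-product isomorphism \(C\boxtimes_\Rmat D\cong D\boxtimes_\Rmat C\), with equivariance, naturality and the hexagons checked on the embedded copies of \(C\) and~\(D\).
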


\begin{proof}
  \cite{Meyer-Roy-Woronowicz:Twisted_tensor}*{Proposition 5.1} shows
  that \(D\boxtimes_\Rmat C \cong C\boxtimes_{\hat{\Rmat}} D\) as
  crossed products, where \(\hat{\Rmat} \defeq \sigma(\Rmat^*)\).
  If~\(\Rmat\) is antisymmetric, this gives an isomorphism of crossed
  products between \((C\boxtimes_\Rmat D,\iota_C,\iota_D)\) and
  \((D\boxtimes_\Rmat C,\iota_C,\iota_D)\).  This isomorphism is
  equivariant because our coactions are determined by what they do on
  the embedded copies of \(C\) and~\(D\).  Thus we get a braided
  monoidal category in this case.

  Conversely, any braiding must give the identity map on
  \(C\boxtimes_\Rmat \C\) and \(\C\boxtimes_\Rmat D\) because~\(\C\)
  is the tensor unit.  Since \(\iota_C = \Id_C\boxtimes 1_D\) and
  \(\iota_D = 1_C\boxtimes \Id_D\), any braiding must be an
  isomorphism of crossed products \(C\boxtimes_\Rmat D \cong
  D\boxtimes_\Rmat C\).  By the argument above, this happens if and
  only if \(C\boxtimes_{\Rmat} D = C\boxtimes_{\hat{\Rmat}} D\) as
  crossed products.  Corollary~\ref{cor:tensor_determines_bichar} says
  that the crossed product~\(A\boxtimes_{\Rmat}A\)
  determines~\(\Rmat\) uniquely.  Hence \(\Rmat=\hat{\Rmat}\).
\end{proof}

\section{Quantum codoubles}
\label{sec:coact_cat_codoub}

Quantum codoubles of compact quantum groups were introduced by
Podle\'s and Woronowicz
in~\cite{Podles-Woronowicz:Quantum_deform_Lorentz} to construct an
example of a quantum Lorentz group.  The definition was extended to
the non-compact case
in~\cite{Woronowicz-Zakrzewski:Quantum_Lorentz_Gauss}.  Quantum
codoubles were also described by Baaj and
Vaes in~\cite{Baaj-Vaes:Double_cros_prod}*{Proposition 9.5},
assuming the underlying quantum group to be generated by a regular
multiplicative unitary.  We call the dual of the quantum codouble
Drinfeld double.  Some authors use different notation, exchanging
doubles and codoubles.

We shall refer to~\cite{Roy:Codoubles} for the definition of the
quantum codouble~\(\Codouble{\G}\) and the Drinfeld
double~\(\Drinfdouble{\G}\) of a \(\Cst\)\nb-quantum group
\(\G=(A,\Comult[A])\).  It is shown in~\cite{Roy:Codoubles} that
\(\Codouble{\G}\) and \(\Drinfdouble{\G}\) are again
\(\Cst\)\nb-quantum groups and that \(\Codouble{\G}\)-coactions on
\(\Cst\)\nb-algebras are equivalent to \(\G\)\nb-Yetter--Drinfeld
\(\Cst\)\nb-algebras.  To simplify notation, we shall only consider
the special case of the results in~\cite{Roy:Codoubles} where
\(B=\hat{A}\) and the bicharacter~\(\textup{V}\) is
\(\multunit\in\U(\hat{A}\otimes A)\) because that is all we need
below.

The main result in this section is that this equivalence of
categories between \(\Codouble{\G}\)-coactions and
\(\G\)\nb-Yetter--Drinfeld \(\Cst\)\nb-algebras turns the tensor
products \(\boxtimes_{\Rmat}\) for
\(\Codouble{\G}\)-\(\Cst\)\nb-algebras and a canonical
\(\Rmat\)\nb-matrix for \(\Codouble{\G}\) into the tensor
product~\(\boxtimes_{\multunit}\) for \(\G\)\nb-Yetter--Drinfeld
algebras.  We also show that the tensor product~\(\boxtimes_\Rmat\)
for a general quasitriangular quantum group is a special case of the
same operation for its codouble (see
Theorem~\ref{the:Codoub_quasi_triag}).

The \emph{quantum codouble} \(\Codouble{\G}=\Bialg{\CodoubAlg}\)
of~\(\G\) is defined by \(\CodoubAlg\defeq A\otimes\hat{A}\) and
\begin{align*}
  \flip^{\multunit}\colon A\otimes\hat{A} &\to\hat{A}\otimes A,&\qquad
  a\otimes\hat{a} &\mapsto \multunit(\hat{a}\otimes a)\multunit[*],\\
  \Comult[\CodoubAlg]\colon\CodoubAlg&\to\CodoubAlg\otimes\CodoubAlg,
  &\qquad
  a\otimes\hat{a}&\mapsto
  \flip^{\multunit}_{23}(\Comult[A](a)\otimes\DuComult[A](\hat{a})),
\end{align*}
for \(a\in A\), \(\hat{a}\in\hat{A}\).  We may
generate~\(\Codouble{\G}\) by a manageable multiplicative unitary by
\cite{Roy:Codoubles}*{Theorem 4.1}.  So it is a \(\Cst\)\nb-quantum
group and has a dual \(\Drinfdouble{\G}=\Bialg{\DrinfDoubAlg}\), which
is called the \emph{Drinfeld double} of~\(\G\).  We have
\[
\DrinfDoubAlg = \rho(A)\cdot\theta(\hat{A})\quad\text{and}\quad
\Comult[\DrinfDoubAlg](\rho(a)\cdot\theta(\hat{a})) =
(\rho\otimes\rho)\Comult[A](a)\cdot(\theta\otimes\theta)
\DuComult[A](\hat{a})
\]
for a certain pair of representations \(\rho\) and~\(\theta\) of~\(A\)
and~\(\hat{A}\) on the same Hilbert space.  The formulas for \(\rho\)
and~\(\theta\) will not be needed in the following.

It is crucial that \(\rho\) and~\(\theta\) give Hopf
\Star{}homomorphisms from \(\G\) and~\(\DuG\) to \(\Drinfdouble{\G}\).
These induce dual morphisms \(\Codouble{\G} \to \G\) and
\(\Codouble{\G} \to \DuG\) (compare
Theorem~\ref{the:equivalent_notion_of_homomorphisms}).  These quantum
group morphisms induce a map on corepresentations (see
\cite{Meyer-Roy-Woronowicz:Homomorphisms}*{Proposition 6.5} and the
proof of \cite{Meyer-Roy-Woronowicz:Twisted_tensor}*{Theorem 5.2} for
a correct general proof).  Thus a corepresentation
of~\(\Codouble{\G}\) induces corepresentations of \(\G\) and~\(\DuG\)
on the same Hilbert space.  It is shown in~\cite{Roy:Codoubles} that
this gives a bijection between corepresentations of~\(\Codouble{\G}\)
and certain pairs of corepresentations of \(\G\) and~\(\DuG\):

\begin{proposition}[\cite{Roy:Codoubles}*{Proposition 6.11}]
  \label{prop:corep_codoub_classify}
  Let~\(\Hils[K]\) be a Hilbert space.  The corepresentations
  \(\corep{U}\in\U(\Comp(\Hils[K])\otimes A)\) and
  \(\corep{V}\in\U(\Comp(\Hils[K])\otimes\hat{A})\) of \(\G\)
  and~\(\DuG\) associated to a corepresentation \(\corep{X}\in
  \U(\Comp(\Hils[K])\otimes\widehat{\DrinfDoubAlg})\)
  of~\(\Codouble{\G}\) satisfy
  \[
  \flip^{\multunit}_{23}\bigl(\corep{U}_{12}\corep{V}_{13}\bigr)
  = \corep{V}_{12}\corep{U}_{13}
  \qquad\text{in }\U(\Comp(\Hils[K])\otimes\hat{A}\otimes A);
  \]
  we call a pair~\((\corep{U},\corep{V})\) with this property
  \emph{\(\Codouble{\G}\)\nb-compatible}.  The map \(\corep{X}\mapsto
  (\corep{U},\corep{V})\) above is a bijection from corepresentations
  of~\(\Codouble{\G}\) to \(\Codouble{\G}\)\nb-compatible pairs of
  corepresentations of \(\G\) and~\(\DuG\), with inverse
  \[
  \corep{X}\defeq\corep{U}_{12}\corep{V}_{13}
  \qquad\text{in }\U(\Comp(\Hils[K])\otimes A\otimes\hat{A}).
  \]
\end{proposition}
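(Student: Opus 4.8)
The plan is to establish this bijection by working with the multiplicative unitary $\Multunit^{\Codouble{\G}}$ of the quantum codouble and the explicit formula from \cite{Roy:Codoubles}*{Theorem 4.1} that expresses it in terms of $\multunit$ and $\Dumultunit$. Recall that a corepresentation $\corep{X}\in\U(\Comp(\Hils[K])\otimes\widehat{\DrinfDoubAlg})$ of $\Codouble{\G}$ is, by definition, a unitary satisfying the corepresentation identity with respect to $\DuComult[\CodoubAlg]$; since $\widehat{\DrinfDoubAlg}$ (the Pontryagin dual of the Drinfeld double, i.e. the "double of the codouble") decomposes as a tensor product with underlying $\Cst$\nb-algebra built from $A$ and $\hat{A}$, any such $\corep{X}$ can be written as $\corep{X}=\corep{U}_{12}\corep{V}_{13}$ for uniquely determined $\corep{U}\in\U(\Comp(\Hils[K])\otimes A)$ and $\corep{V}\in\U(\Comp(\Hils[K])\otimes\hat{A})$ — this is just the slicing/factorisation in the last two legs. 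So the content is: (i) $\corep{X}$ is a corepresentation of $\Codouble{\G}$ if and only if $\corep{U}$, $\corep{V}$ are corepresentations of $\G$, $\DuG$ respectively \emph{and} the compatibility identity $\flip^{\multunit}_{23}(\corep{U}_{12}\corep{V}_{13})=\corep{V}_{12}\corep{U}_{13}$ holds; and (ii) under this correspondence, $\corep{U}$ and $\corep{V}$ are exactly the corepresentations induced by the Hopf $\Star$\nb-homomorphisms $\rho$ and $\theta$.

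First I would unpack the corepresentation condition for $\corep{X}=\corep{U}_{12}\corep{V}_{13}$. Using the formula for $\DuComult[\CodoubAlg]$ dual to $\Comult[\CodoubAlg]$ — which, because $\Comult[\CodoubAlg](a\otimes\hat a)=\flip^{\multunit}_{23}(\Comult[A](a)\otimes\DuComult[A](\hat a))$, is built from $\DuComult[A]$ on the $A$\nb-leg, $\Comult[A]$-type comultiplication on the $\hat A$\nb-leg (dually), twisted by $\flip^{\multunit}$ — the identity $(\Id\otimes\DuComult[\CodoubAlg])\corep{X}=\corep{X}_{12}\corep{X}_{1(34)}$ expands, after inserting $\corep{X}=\corep{U}_{12}\corep{V}_{13}$ and using that $\corep{U}$ lives only in the $A$\nb-legs and $\corep{V}$ only in the $\hat A$\nb-legs, into an equation in $\U(\Comp(\Hils[K])\otimes A\otimes\hat A\otimes A\otimes\hat A)$. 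The left-hand side produces the comultiplications of $\corep{U}$ and $\corep{V}$ separately together with one cross-term where $\flip^{\multunit}$ has to move a $\hat A$\nb-leg past an $A$\nb-leg; the right-hand side is $\corep{U}_{12}\corep{V}_{13}\corep{U}_{14}\corep{V}_{15}$. Matching the two and peeling off the "diagonal" pieces forces, on one hand, $(\Id\otimes\DuComult[A])\corep{U}=\corep{U}_{12}\corep{U}_{13}$ and the analogous identity for $\corep{V}$ (so each is a corepresentation of $\G$, $\DuG$), and on the other hand the surviving cross-term is precisely the $\Codouble{\G}$\nb-compatibility relation. Conversely, if $\corep{U},\corep{V}$ are corepresentations and compatible, running this computation backwards shows $\corep{X}=\corep{U}_{12}\corep{V}_{13}$ is a corepresentation of $\Codouble{\G}$. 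I would also check unitarity of $\corep{X}$ is automatic (product of unitaries) and that $\corep{U},\corep{V}$ are recovered from $\corep{X}$ by slicing, so the two assignments are mutually inverse.

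The second half — identifying the abstractly-induced corepresentations with this $\corep{U}$, $\corep{V}$ — is where I would invoke the functoriality of corepresentations under quantum group morphisms (\cite{Meyer-Roy-Woronowicz:Homomorphisms}*{Proposition 6.5}, with the caveat flagged in the text about using the corrected general proof from \cite{Meyer-Roy-Woronowicz:Twisted_tensor}*{Theorem 5.2}). The morphism $\Codouble{\G}\to\G$ dual to $\rho\colon\G\to\Drinfdouble{\G}$ acts on a corepresentation $\corep{X}$ of $\Codouble{\G}$ by slicing against the bicharacter realising that morphism; by the explicit description of $\rho$ this slicing picks out exactly the $A$\nb-leg factor $\corep{U}$, and dually $\theta$ picks out $\corep{V}$. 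This step is essentially bookkeeping once the bicharacters realising $\rho^\vee$ and $\theta^\vee$ are written down in terms of $\multunit$, $\Dumultunit$.

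\textbf{The main obstacle} I anticipate is the combinatorial care needed in the leg-numbering gymnastics when expanding $(\Id\otimes\DuComult[\CodoubAlg])(\corep{U}_{12}\corep{V}_{13})$: the comultiplication on $\Codouble{\G}$ is itself defined by conjugating a tensor-type comultiplication by $\flip^{\multunit}$, so one has to track several instances of $\multunit$ appearing and cancelling, and be scrupulous about which copy of $A$ or $\hat A$ sits in which leg of a five-fold tensor product. The pentagon-style identity for $\multunit$ (and the fact that $\flip^{\multunit}$ genuinely intertwines the comultiplications, which is what makes $\Codouble{\G}$ a bialgebra in the first place) is exactly what makes the cross-terms collapse to the single compatibility relation; getting the bracketing right so that the $\multunit$'s cancel in pairs is the delicate part. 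Everything else — unitarity, the inverse map, the triviality of slicing — is routine. Alternatively, one could shortcut both halves by citing the general machinery of \cite{Roy:Codoubles} directly and only verifying that the compatibility relation stated here is the specialisation of the general one to $B=\hat A$, $\textup V=\multunit$; since the statement is attributed to \cite{Roy:Codoubles}*{Proposition 6.11}, this is presumably the route the authors take.
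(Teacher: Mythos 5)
First, a remark on the comparison you anticipated: the paper contains no proof of this proposition at all — it is imported verbatim from \cite{Roy:Codoubles}*{Proposition 6.11} — so your closing guess about the authors' route is exactly right. Judged as a standalone argument, however, your sketch has a genuine gap at its first step. You assert that, because \(\widehat{\DrinfDoubAlg}=A\otimes\hat{A}\) is a tensor product, \emph{any} \(\corep{X}\in\U(\Comp(\Hils[K])\otimes\widehat{\DrinfDoubAlg})\) factors as \(\corep{X}=\corep{U}_{12}\corep{V}_{13}\) ``by slicing/factorisation in the last two legs.'' This is false: a general unitary multiplier of \(\Comp(\Hils[K])\otimes A\otimes\hat{A}\) admits no such factorisation. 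Already for \(A=\Contvin(G)\) with \(G\) abelian, corepresentations of \(\Codouble{\G}\) are unitary representations of \(G\times\hat{G}\), and the factorisation of such a representation into (commuting) representations of the two factors is a consequence of the representation property, not of the tensor-product form of the algebra. The existence of the factorisation for corepresentations of \(\Codouble{\G}\) is precisely the substantive forward half of the proposition; it must be extracted from the corepresentation condition — for instance via the equivalence of corepresentations of \(\Codouble{\G}\) with representations of \(\DrinfDoubAlg^\univ\), together with \(\DrinfDoubAlg=\rho(A)\cdot\theta(\hat{A})\) and the defining commutation relation of the double, or from the explicit multiplicative unitary of the codouble. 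By treating it as a formal triviality you have assumed the hard direction.

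Relatedly, even granting a factorisation by some unitaries \(\corep{U},\corep{V}\), the ``peel off the diagonal pieces'' step is not a valid deduction: from the single identity obtained by expanding \((\Id\otimes\Comult[\CodoubAlg])\corep{X}=\corep{X}_{12}\corep{X}_{13}\) with \(\corep{X}=\corep{U}_{12}\corep{V}_{13}\) one cannot simply match factors to conclude that \(\corep{U}\) and \(\corep{V}\) are separately corepresentations; that identification again needs the induced morphisms \(\Codouble{\G}\to\G\) and \(\Codouble{\G}\to\DuG\) (equivalently, composing the representation of \(\DrinfDoubAlg^\univ\) with \(\rho^\univ\) and \(\theta^\univ\)), which is in fact how the pair \((\corep{U},\corep{V})\) is defined in the statement. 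What is correct in your sketch is the converse direction: if \(\corep{U}\) and \(\corep{V}\) are corepresentations satisfying the compatibility relation, then expanding \(\Comult[\CodoubAlg]=\flip^{\multunit}_{23}\circ(\Comult[A]\otimes\DuComult[A])\) and commuting \(\corep{U}_{12}\) and \(\corep{V}_{15}\) past the twisted flip shows that \(\corep{U}_{12}\corep{V}_{13}\) is a corepresentation of \(\Codouble{\G}\); that computation, and the bookkeeping identifying the induced corepresentations with the two factors, is fine. Also note a small notational slip: the relevant comultiplication is \(\Comult[\CodoubAlg]\) itself (the paper's formula), not a ``dual'' of it.
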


A quantum group morphism also induces a functor between the coaction
categories, see Theorem~\ref{the:equivalent_notion_of_homomorphisms}.
Thus a continuous coaction of~\(\Codouble{\G}\) on a
\(\Cst\)\nb-algebra~\(C\) induces coactions of \(\G\) and~\(\DuG\).
Once again, this gives a bijection from coactions of~\(\Codouble{\G}\)
to certain pairs of coactions of \(\G\) and~\(\DuG\):

\begin{proposition}
  \label{prop:G_Yetter_Drinfeld}
  Let~\(C\) be a \(\Cst\)\nb-algebra.  The continuous coactions
  \(\gamma\) and~\(\delta\) of \(\G\) and~\(\DuG\) associated to a
  continuous coaction~\(\xi\) of~\(\Codouble{\G}\) satisfy
  \begin{equation}
    \label{eq:G-Yetter_Drinfeld}
    \flip^{\multunit}_{23}\bigl((\gamma\otimes \Id_{\hat{A}})
    \delta\bigr)
    = (\delta\otimes\Id_A)\gamma;
  \end{equation}
  A \(\Cst\)\nb-algebra with such a pair of coactions is called a
  \(\G\)\nb-\emph{Yetter--Drinfeld \(\Cst\)-algebra}.

  The map \(\xi\mapsto (\gamma,\delta)\) above is a bijection from
  continuous coactions of~\(\Codouble{\G}\) to the set of pairs of
  continuous coactions \((\gamma,\delta)\)
  satisfying~\eqref{eq:G-Yetter_Drinfeld}; the inverse maps
  \((\gamma,\delta)\) to \(\xi= (\gamma\otimes\Id_{\hat{A}}) \delta\).
\end{proposition}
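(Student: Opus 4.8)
The plan is to deduce this from Proposition~\ref{prop:corep_codoub_classify} together with the usual dictionary between coactions and covariant representations.  The functors \(\Cstcat(\Codouble{\G})\to\Cstcat(\G)\) and \(\Cstcat(\Codouble{\G})\to\Cstcat(\DuG)\) induced by the morphisms \(\Codouble{\G}\to\G\) and \(\Codouble{\G}\to\DuG\) (Theorem~\ref{the:equivalent_notion_of_homomorphisms}) act on covariant representations by keeping the \(\Cst\)\nb-algebra representation fixed and transporting the implementing corepresentation along the maps of Proposition~\ref{prop:corep_codoub_classify}.  So, given a continuous coaction~\(\xi\) of~\(\Codouble{\G}\) on~\(C\), I first choose a faithful covariant representation \((\pi,\corep{X})\) of \((C,\xi)\) on a Hilbert space~\(\Hils[K]\), so that \((\pi\otimes\Id_{\CodoubAlg})\xi(c)=\corep{X}(\pi(c)\otimes1)\corep{X}^*\) for all \(c\in C\), and write \(\corep{X}=\corep{U}_{12}\corep{V}_{13}\) for the \(\Codouble{\G}\)\nb-compatible pair \((\corep{U},\corep{V})\) of Proposition~\ref{prop:corep_codoub_classify}.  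Then \(\gamma\) and~\(\delta\) are precisely the continuous coactions of \(\G\) and~\(\DuG\) implemented on \((\pi,\Hils[K])\) by \(\corep{U}\) and~\(\corep{V}\).

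To obtain~\eqref{eq:G-Yetter_Drinfeld}, I apply the faithful representation~\(\pi\) and compute in \(\Bound(\Hils[K])\otimes\hat{A}\otimes A\).  The right-hand side becomes \(\corep{V}_{12}\corep{U}_{13}(\pi(c)\otimes1\otimes1)\corep{U}_{13}^*\corep{V}_{12}^*\), while \((\gamma\otimes\Id_{\hat{A}})\delta(c)\) becomes \(\corep{U}_{12}\corep{V}_{13}(\pi(c)\otimes1\otimes1)\corep{V}_{13}^*\corep{U}_{12}^*=\corep{X}(\pi(c)\otimes1)\corep{X}^*\).  As \(\flip^{\multunit}\colon A\otimes\hat{A}\to\hat{A}\otimes A\) is a \Star{}isomorphism fixing \(\pi(c)\otimes1\), applying \(\flip^{\multunit}_{23}\) to the latter yields \(\flip^{\multunit}_{23}(\corep{X})(\pi(c)\otimes1)\flip^{\multunit}_{23}(\corep{X})^*\), and \(\flip^{\multunit}_{23}(\corep{X})=\corep{V}_{12}\corep{U}_{13}\) by \(\Codouble{\G}\)\nb-compatibility; so the two sides agree.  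The same computation, read as \(\corep{X}(\pi(c)\otimes1)\corep{X}^*=\corep{U}_{12}\corep{V}_{13}(\pi(c)\otimes1\otimes1)\corep{V}_{13}^*\corep{U}_{12}^*\), shows that \(\xi\) is recovered from \((\gamma,\delta)\) via \(\xi=(\gamma\otimes\Id_{\hat{A}})\delta\).

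For the converse, let \((\gamma,\delta)\) be continuous coactions of \(\G\) and~\(\DuG\) on~\(C\) satisfying~\eqref{eq:G-Yetter_Drinfeld}, and set \(\xi\defeq(\gamma\otimes\Id_{\hat{A}})\delta\colon C\to\Mult(C\otimes\CodoubAlg)\), using \(\CodoubAlg=A\otimes\hat{A}\).  I would then verify directly that \(\xi\) is a continuous coaction of~\(\Codouble{\G}\): expanding \((\Id_C\otimes\Comult[\CodoubAlg])\xi\) and \((\xi\otimes\Id_{\CodoubAlg})\xi\) with the explicit formula for~\(\Comult[\CodoubAlg]\) and using coassociativity of \(\gamma\) and of~\(\delta\), the only remaining discrepancy is the commutation of one \(A\)\nb-leg past one \(\hat{A}\)\nb-leg, which is exactly~\eqref{eq:G-Yetter_Drinfeld}; and the Podle\'s condition for~\(\xi\) follows from those for \(\gamma\) and~\(\delta\) by a computation of the same shape as in the proof of Proposition~\ref{pro:braide_tens_presv_cov_corep}.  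That applying the two functors to this~\(\xi\) recovers~\((\gamma,\delta)\) follows from the splitting structure of~\(\Codouble{\G}\) in~\cite{Roy:Codoubles}, or concretely by slicing the \(\hat{A}\)\nb- and \(A\)\nb-legs of \(\xi=(\gamma\otimes\Id_{\hat{A}})\delta\) with the respective counits after passing to the universal level.

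The step I expect to be the main obstacle is the claim used throughout, that the functors on coaction categories induced by the codouble morphisms act on covariant representations via the corepresentation maps of Proposition~\ref{prop:corep_codoub_classify}.  This is essentially Theorem~\ref{the:equivalent_notion_of_homomorphisms} specialised to these morphisms (as treated in~\cite{Roy:Codoubles}), but spelling it out requires care; once it is available, the forward direction and the inverse formula are immediate, and only the coassociativity and Podle\'s checks in the converse need the (routine) explicit computations.
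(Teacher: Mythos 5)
The paper gives no proof of this proposition at all: it defers to \cite{Roy:Codoubles}*{Proposition 6.8} (and to Nest--Voigt in the Haar-weight case), so you are supplying an argument where the paper only cites.  Your forward direction is essentially right and parallels the corepresentation-level computation the paper itself performs in the proof of Theorem~\ref{the:Cat_equiv_YD_and_codoub}: granted the transport fact you flag (that the functors induced by the morphisms \(\Codouble{\G}\to\G\) and \(\Codouble{\G}\to\DuG\) preserve covariant representations and act on the implementing unitaries by the maps of Proposition~\ref{prop:corep_codoub_classify} --- a fact the paper also uses without proof and takes from \cite{Roy:Codoubles}), your computation in \(\Bound(\Hils[K]\otimes\hat{A}\otimes A)\) does yield \eqref{eq:G-Yetter_Drinfeld} and the identity \(\xi=(\gamma\otimes\Id_{\hat{A}})\delta\), hence injectivity of \(\xi\mapsto(\gamma,\delta)\) and the formula for the inverse on its image.

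The genuine gap is in the converse.  After checking that \(\xi\defeq(\gamma\otimes\Id_{\hat{A}})\delta\) is a continuous coaction of \(\Codouble{\G}\) (your coassociativity and Podle\'s arguments are fine; injectivity of \(\xi\) should also be recorded, though it is immediate), surjectivity still requires that the pair \((\gamma',\delta')\) \emph{functorially} associated to this \(\xi\) equals \((\gamma,\delta)\).  Your forward direction only gives \((\gamma'\otimes\Id_{\hat{A}})\delta'=\xi=(\gamma\otimes\Id_{\hat{A}})\delta\), and one cannot simply cancel in this identity.  The suggested fix --- slicing the \(A\)- and \(\hat{A}\)-legs with counits ``after passing to the universal level'' --- presupposes universal lifts of the continuous coactions \(\gamma\) and \(\delta\), machinery not developed in this paper, and even then it only closes the argument when combined with the displayed identity as above; as written it merely shows that \(\gamma\) and \(\delta\) can be read off from \(\xi\), not that the functorial pair is obtained this way.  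A self-contained repair would be to verify the characterisation \eqref{eq:right_quantum_group_homomorphism_as_fucntor} of the induced coactions directly: write down the explicit right quantum group homomorphisms \(\CodoubAlg\to\CodoubAlg\otimes A\) and \(\CodoubAlg\to\CodoubAlg\otimes\hat{A}\) attached to the projection morphisms of \(\Codouble{\G}\) (available in \cite{Roy:Codoubles}) and check the defining diagram for \(\xi=(\gamma\otimes\Id_{\hat{A}})\delta\).  That step is only waved at in your proposal, and it is precisely the point where the cited \cite{Roy:Codoubles}*{Proposition 6.8} does real work.
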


Yetter-Drinfeld \(\Cst\)-algebras were defined by Nest and Voigt in
\cite{Nest-Voigt:Poincare}*{Definition 3.1} (assuming Haar weights
on~\(\G\)), and Proposition~\ref{prop:G_Yetter_Drinfeld} is
essentially \cite{Nest-Voigt:Poincare}*{Proposition 3.2}.  For
\(\Cst\)\nb-quantum groups without Haar weights,
Proposition~\ref{prop:G_Yetter_Drinfeld} is
\cite{Roy:Codoubles}*{Proposition 6.8}, with an explicit description
of the bijection taken from the proof of
\cite{Roy:Codoubles}*{Proposition 6.8}.

Let~\(\YDcat(\G)\) denote the category with \(\G\)\nb-Yetter--Drinfeld
\(\Cst\)\nb-algebras as objects and morphisms that are both \(\G\)-
and \(\DuG\)\nb-equivariant as arrows.

The following unitary is an \Rmattxt\nb-matrix for~\(\Codouble{\G}\)
by \cite{Roy:Codoubles}*{Lemma~5.11}:
\[
\Rmat=(\theta\otimes\rho)\multunit
\in\U(\DrinfDoubAlg\otimes\DrinfDoubAlg).
\]
Thus~\(\Codouble{\G}\) is quasitriangular and the construction in the
previous section gives a monoidal
structure~\(\boxtimes_{\Rmat}\) on \(\Cstcat(\Codouble{\G})\).  What
happens when we translate this to the equivalent setting of
\(\G\)\nb-Yetter--Drinfeld \(\Cst\)\nb-algebras?

The reduced bicharacter \(\multunit[A]\in\U(\hat{A}\otimes A)\), being
a bicharacter, gives a tensor product~\(\boxtimes_{\multunit}\) for
two \(\G\)\nb-Yetter--Drinfeld \(\Cst\)\nb-algebras.  This tensor
product is also used by Nest and Voigt in~\cite{Nest-Voigt:Poincare}
(they require, however, that~\(\G\) has Haar weights).

\begin{theorem}
  \label{the:Cat_equiv_YD_and_codoub}
  Let \(C_1\) and~\(C_2\) be \(\Codouble{\G}\)-\(\Cst\)-algebras, view
  them also as \(\G\)\nb-Yetter--Drinfeld \(\Cst\)-algebras.  There is
  an equivariant isomorphism of crossed products
  \(C_1\boxtimes_{\Rmat} C_2 \cong C_1\boxtimes_{\multunit} C_2\).
\end{theorem}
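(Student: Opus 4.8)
The plan is to realise the two twisted tensor products as \emph{the same} concrete \(\Cst\)-algebra of operators on a single Hilbert space, carrying the same canonical embeddings of \(C_1\) and~\(C_2\); the statement then reduces to an identity between two twisting unitaries, and the resulting isomorphism is automatically equivariant, since on either side the \(\Codouble{\G}\)\nb-coaction (equivalently, the Yetter--Drinfeld structure) is the unique one for which \(\iota_{C_1}\) and \(\iota_{C_2}\) are equivariant, as in Proposition~\ref{pro:braide_tens_presv_cov_corep}.

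First I would fix faithful covariant representations \((\varphi_i,\corep{X}^{\Hils_i})\) of the \(\Codouble{\G}\)-\(\Cst\)-algebras \((C_i,\xi_i)\) on Hilbert spaces~\(\Hils_i\), \(i=1,2\). By Proposition~\ref{prop:corep_codoub_classify}, \(\corep{X}^{\Hils_i}=\corep{U}^{\Hils_i}_{12}\corep{V}^{\Hils_i}_{13}\) for a \(\Codouble{\G}\)\nb-compatible pair \((\corep{U}^{\Hils_i},\corep{V}^{\Hils_i})\) of corepresentations of~\(\G\) and~\(\DuG\); and since the coactions \(\gamma_i,\delta_i\) of the Yetter--Drinfeld structure of~\(C_i\) are by construction those induced from~\(\xi_i\) along the canonical morphisms \(\Codouble{\G}\to\G\) and \(\Codouble{\G}\to\DuG\), the triples \((\varphi_i,\corep{U}^{\Hils_i})\) and \((\varphi_i,\corep{V}^{\Hils_i})\) are faithful covariant representations of \((C_i,\gamma_i)\) and \((C_i,\delta_i)\). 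Theorem~\ref{the:crossed_prod} then identifies \(C_1\boxtimes_\Rmat C_2\) with the concrete crossed product inside \(\Bound(\Hils_1\otimes\Hils_2)\) whose embedding of~\(C_1\) is \(c\mapsto\varphi_1(c)\otimes 1_{\Hils_2}\) and whose embedding of~\(C_2\) is \(d\mapsto X_\Rmat\bigl(1_{\Hils_1}\otimes\varphi_2(d)\bigr)X_\Rmat^{*}\), where \(X_\Rmat\) is the unitary attached by~\eqref{eq:commutation_between_corep_of_two_quantum_groups} to the bicharacter~\(\Rmat\) and the corepresentations \(\corep{X}^{\Hils_1},\corep{X}^{\Hils_2}\); and likewise it identifies \(C_1\boxtimes_\multunit C_2\) with the concrete crossed product whose embedding of~\(C_1\) is the same and whose embedding of~\(C_2\) uses the twisting unitary \(X_\multunit\) attached to \(\multunit\in\U(\hat A\otimes A)\) and the corepresentations of~\(\G\) and~\(\DuG\) underlying the two Yetter--Drinfeld structures. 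Since the embeddings of~\(C_1\) agree, the theorem reduces to proving \(X_\Rmat=X_\multunit\): then the embeddings of~\(C_2\) agree too, and the two crossed products are the same concrete \(\Cst\)-algebra with the same pair of embeddings.

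It remains to prove \(X_\Rmat=X_\multunit\), which is where the special structure enters. I would substitute both \(\Rmat=(\theta\otimes\rho)\multunit\) and the factorisation \(\corep{X}^{\Hils_i}=\corep{U}^{\Hils_i}_{12}\corep{V}^{\Hils_i}_{13}\) into the relation~\eqref{eq:commutation_between_corep_of_two_quantum_groups} characterising~\(X_\Rmat\). Using the \(\Codouble{\G}\)\nb-compatibility identity \(\flip^\multunit_{23}(\corep{U}_{12}\corep{V}_{13})=\corep{V}_{12}\corep{U}_{13}\) of Proposition~\ref{prop:corep_codoub_classify} and the character properties of~\(\multunit\), the occurrences of \(\theta\) and~\(\rho\) coming from \(\Rmat\) should match those implicit in the passage \(\corep{X}^{\Hils_i}\mapsto(\corep{U}^{\Hils_i},\corep{V}^{\Hils_i})\), so that the relation collapses to the one characterising the twisting unitary~\(X_\multunit\) of the bicharacter~\(\multunit\) for the corepresentations \(\corep{U}^{\Hils_i},\corep{V}^{\Hils_i}\). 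As such a commutation relation determines its unitary uniquely, this yields \(X_\Rmat=X_\multunit\). If one prefers, the same comparison can be carried out at the universal level: by uniqueness of universal lifts of bicharacters (\cite{Meyer-Roy-Woronowicz:Homomorphisms}*{Proposition~4.7}, as in Proposition~\ref{prop:univ_R_matrix}), the universal lift \(\UnivRmat\in\U(\DrinfDoubAlg^\univ\otimes\DrinfDoubAlg^\univ)\) of~\(\Rmat\) is the image of the universal lift of~\(\multunit\) under the universal versions of \(\theta\) and~\(\rho\), and evaluating it against the representation of~\(\DrinfDoubAlg^\univ\) classifying~\(\corep{X}^{\Hils_i}\) reproduces the evaluation of the universal lift of~\(\multunit\) against the representations classifying~\(\corep{U}^{\Hils_i}\) and~\(\corep{V}^{\Hils_i}\).

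I expect the main obstacle to be exactly this bookkeeping step: one has to keep careful track of which Hilbert-space leg each tensor factor occupies, split the two-sided twist by~\(X_\Rmat\) into a \(\G\)\nb-part and a \(\DuG\)\nb-part according to \(\corep{X}^{\Hils_i}=\corep{U}^{\Hils_i}_{12}\corep{V}^{\Hils_i}_{13}\), and verify that the Yetter--Drinfeld compatibility~\eqref{eq:G-Yetter_Drinfeld} --- equivalently the corepresentation identity above --- is precisely the hypothesis under which the \(\Rmat\)\nb-relation and the \(\multunit\)\nb-relation become equivalent. Everything else is formal, resting on the universal properties recalled in Section~\ref{sec:R-matrices} and in~\cite{Meyer-Roy-Woronowicz:Homomorphisms}.
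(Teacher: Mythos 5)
Your proposal is correct and takes essentially the same route as the paper: both arguments realise the two twisted tensor products concretely on \(\Hils_1\otimes\Hils_2\) with identical embeddings of \(C_1\) and~\(C_2\) and reduce everything to the equality of the two twisting unitaries, with equivariance coming for free from the uniqueness of the coactions making \(\iota_{C_1}\) and~\(\iota_{C_2}\) equivariant. In fact, the ``universal level'' variant you offer in passing --- \(\UnivRmat=(\theta^\univ\otimes\rho^\univ)(\multunit{}^\univ)\) by uniqueness of lifts of bicharacters, evaluated through \(\Pi_i\) with \(\hat\pi_i=\Pi_i\circ\theta^\univ\) and \(\pi_i=\Pi_i\circ\rho^\univ\) --- is precisely the computation the paper performs, and is cleaner than your primary reduced-level bookkeeping with Heisenberg pairs.
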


\begin{proof}
  First we describe the braiding on \(\Corepcat(\Codouble{\G})\)
  induced by~\(\Rmat\) in terms of~\(\multunit\) and
  compatible pairs of corepresentations of \(A\) and~\(\hat{A}\).

  Recall that the maps \(\rho\colon A\to\DrinfDoubAlg\) and
  \(\theta\colon \hat{A}\to\DrinfDoubAlg\) are Hopf
  \Star{}homomorphisms.  Thus they lift to the universal quantum
  groups: \(\rho^\univ\colon A^\univ\to\DrinfDoubAlg^\univ\) and
  \(\theta^\univ\colon \hat{A}^\univ\to\DrinfDoubAlg^\univ\).  Let
  \(\multunit{}^\univ\in\U(\hat{A}^\univ\otimes A^\univ)\) be the
  universal lift of~\(\multunit\).  The unitary
  \((\theta^\univ\otimes\rho^\univ)(\multunit{}^\univ) \in
  \U(\DrinfDoubAlg^\univ\otimes\DrinfDoubAlg^\univ)\) is a bicharacter
  and lifts~\(\Rmat\).  Hence it is \emph{the} universal
  lift~\(\UnivRmat\) of~\(\Rmat\).

  A corepresentation of~\(\Codouble{\G}\) is equivalent to a
  representation of~\(\DrinfDoubAlg^\univ\).  Composing this with the
  morphisms \(\theta^\univ\) and~\(\rho^\univ\) gives representations
  \(\hat\pi\) and~\(\pi\) of \(\hat{A}^\univ\) and~\(A^\univ\).  These
  are, in turn, equivalent to corepresentations \(\corep{U}\)
  and~\(\corep{V}\) of~\(A\) and \(\hat{A}\).  The construction of
  \((\corep{U},\corep{V})\) is exactly the bijection to
  \(\Codouble{\G}\)\nb-compatible pairs of corepresentations in
  Proposition~\ref{prop:corep_codoub_classify}.

  Now take two corepresentations of~\(\Codouble{\G}\) on Hilbert
  spaces~\(\Hils_k\).  These correspond to representations~\(\Pi_k\)
  of~\(\DrinfDoubAlg^\univ\), which determine representations
  \(\hat\pi_k=\Pi_k\circ\theta^\univ\) and
  \(\pi_k=\Pi_k\circ\rho^\univ\) of \(\hat{A}^\univ\)
  and~\(A^\univ\) on~\(\Hils_k\) for \(k=1,2\).  The braiding
  unitary~\(\Braiding{\Hils_1}{\Hils_2}\) is given by
  \eqref{eq:Yang_baxter_unit_corep}
  and~\eqref{eq:braiding_induced_by_R-matrix} and involves the
  unitary
  \[
  (\Pi_1\otimes\Pi_2)(\UnivRmat)^*
  = (\Pi_1\theta^\univ\otimes \Pi_2\rho^\univ)(\multunit{}^\univ)^*
  = (\hat\pi_1\otimes\pi_2)(\multunit{}^\univ)^*.
  \]

  Let \((C_i,\lambda_i)\) be \(\Codouble{\G}\)\nb-\(\Cst\)-algebras.
  Proposition~\ref{prop:G_Yetter_Drinfeld} gives a unique pair of
  coactions \(\gamma_i\colon C_i\to C_i\otimes A\) and
  \(\delta_i\colon C_i\to C_i\otimes\hat{A}\) such that
  \((C_i,\gamma_i,\delta_i)\) is a \(\G\)\nb-Yetter\nb-Drinfeld
  \(\Cst\)-algebra and
  \(\lambda_i=(\gamma_i\otimes\Id_{\hat{A}})\circ\delta_i\) for
  \(i=1,2\).  There are faithful covariant representations
  \((\corep{X}^{\Hils_i},\varphi_i)\) of
  \((C_i,\lambda_i,\widehat{\DrinfDoubAlg})\) on Hilbert
  spaces~\(\Hils_i\) for \(i=1,2\).  We use these faithful
  covariant representations to define \(C_1\boxtimes_{\Rmat}C_2\) as a
  \(\Cst\)\nb-subalgebra of \(\Bound(\Hils_1\otimes\Hils_2)\),
  see Theorem~\ref{the:crossed_prod}.

  Proposition~\ref{prop:corep_codoub_classify} turns
  \(\corep{X}^{\Hils_i}\) into a \(\Codouble{\G}\)\nb-compatible
  pair of corepresentations
  \((\corep{U}^{\Hils_i},\corep{V}^{\Hils_i})\).  The maps on
  corepresentations and coactions induced by a quantum group morphism
  preserve covariance of representations.  Hence
  \((\varphi_i,\corep{U}^{\Hils_i})\) is a covariant representation
  of~\((C_i,\gamma_i,A)\) on~\(\Hils_i\) and
  \((\varphi_i,\corep{V}^{\Hils_i})\) is a covariant representation
  of~\((C_i,\delta_i,\hat{A})\) on~\(\Hils_i\) for \(i=1,2\),
  respectively.  We use these faithful covariant representations to
  define \(C_1\boxtimes_{\multunit}C_2\) as a \(\Cst\)\nb-subalgebra
  of \(\Bound(\Hils_1\otimes\Hils_2)\), see
  Theorem~\ref{the:crossed_prod}.

  The representation of \(C_1\boxtimes_{\bichar} C_2\)
  on \(\Hils_1\otimes\Hils_2\) comes from the representations
  \[
  C_1\ni c_1\mapsto \varphi_1(c_1)\otimes 1,\qquad
  C_2\ni c_2\mapsto Z_\bichar (1\otimes \varphi_2(c_2)) Z_\bichar^*,
  \]
  where \(Z_{\Rmat}=(\Pi_1\otimes\Pi_2)(\UnivRmat)^*\) and
  \(Z_{\multunit}= (\hat\pi_1\otimes\pi_2)(\multunit{}^\univ)^*\).
  The computation above shows that \(Z_{\multunit}=Z_{\Rmat}\), so
  \(C_1\boxtimes_{\Rmat} C_2 = C_1\boxtimes_{\multunit} C_2\) as
  \(\Cst\)\nb-subalgebras of \(\Bound(\Hils_1\otimes\Hils_2)\).
\end{proof}

Since \(\G\)\nb-Yetter--Drinfeld \(\Cst\)-algebras are equivalent to
\(\Codouble{\G}\)-\(\Cst\)-algebras, the isomorphism in
Theorem~\ref{the:Cat_equiv_YD_and_codoub} shows that
\(C_1\boxtimes_{\multunit} C_2\) for two \(\G\)\nb-Yetter--Drinfeld
\(\Cst\)\nb-algebras \(C_1\) and~\(C_2\) carries a unique
\(\G\)\nb-Yetter--Drinfeld \(\Cst\)-algebra structure for which the
embeddings of \(C_1\) and~\(C_2\) are equivariant.  This extra
structure is natural, and \((\YDcat(\G),\boxtimes_{\multunit})\) is a
monoidal category.  By construction, the equivalence between
\(\YDcat(\G)\) and \(\Cstcat(\Codouble{\G})\) is an equivalence of
monoidal categories between \((\YDcat(\G),\boxtimes_{\multunit})\) and
\((\Cstcat(\Codouble{\G}),\boxtimes_{\Rmat})\).

\begin{remark}
  \label{pro:Symmetric_braid_and_self_duality}
  Propositions \ref{pro:symmetric_braiding}
  and~\ref{pro:braided_monoidal_Cstar} show that~\(\boxtimes_\Rmat\)
  admits a braiding if and only if it is symmetric, if and only if the
  braiding on \(\Corepcat(\Codouble{\G})\) associated to~\(\Rmat\) is
  symmetric if and only if~\(\Rmat\) is antisymmetric, that is,
  \(\Rmat^* = \sigma(\Rmat)\).  For the codouble, this is equivalent
  to \(\multunit=\Dumultunit\).  We know no non-trivial multiplicative
  unitary with this property.  Since \(\multunit=\Dumultunit\) implies
  that \(A\) and~\(\hat{A}\) are the same \(\Cst\)\nb-algebra, such a
  multiplicative unitary cannot be regular.
\end{remark}

\begin{example}
  \label{exa:Heisenberg_double}
  The tensor product \(A\boxtimes_{\multunit}\hat{A}\) is the
  \emph{canonical Heisenberg double} of a \(\Cst\)\nb-quantum
  group~\(\G\), in the sense that its representations are the
  Heisenberg pairs of~\(\G\) (see
  Proposition~\ref{pro:rep_cros_vs_heis}).
  Theorem~\ref{the:Cat_equiv_YD_and_codoub} says that
  \(A\boxtimes_{\multunit}\hat{A} \cong A\boxtimes_{\Rmat}\hat{A}\);
  this is a \(\Cst\)\nb-algebraic version of
  \cite{Delvaux-vanDaele:Drinf_vs_Heisenberg_doub}*{Proposition 5.1}.
\end{example}

It is already shown in \cite{Nest-Voigt:Poincare}*{Section 3}
that~\(\YDcat(\G)\) is a monodical category for the tensor
product~\(\boxtimes_{\multunit}\) if~\(\G\) has Haar weights.

Now let~\(\G\) be a quasitriangular quantum group with \Rmattxt\nb-matrix
\(\Rmat\in \U(\hat{A}\otimes\hat{A})\).  We view~\(\Rmat\) as a quantum
group morphism from~\(A\) to~\(\hat{A}\).
Theorem~\ref{the:equivalent_notion_of_homomorphisms} explains
how~\(\Rmat\) gives an induced coaction \(\delta\colon C\to
C\otimes\hat{A}\) on any \(\G\)\nb-\(\Cst\)\nb-algebra~\((C,\gamma)\).

\begin{lemma}
  \label{lemm:G-Yetter_Drinfeld_from_R_mat}
  Any pair~\((\gamma,\delta)\) as above is \(\G\)\nb-Yetter--Drinfeld.
\end{lemma}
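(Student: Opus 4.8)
The statement to prove is Lemma~\ref{lemm:G-Yetter_Drinfeld_from_R_mat}: given a quasitriangular quantum group $\G=(A,\Comult[A],\Rmat)$ with $\Rmat\in\U(\hat{A}\otimes\hat{A})$, and given a $\G$-$\Cst$-algebra $(C,\gamma)$, the induced coaction $\delta\colon C\to C\otimes\hat{A}$ (coming from the quantum group morphism $A\to\hat{A}$ encoded by $\Rmat$) makes $(\gamma,\delta)$ a $\G$-Yetter--Drinfeld pair, i.e.\ satisfies~\eqref{eq:G-Yetter_Drinfeld}. So I need to verify
\[
\flip^{\multunit}_{23}\bigl((\gamma\otimes\Id_{\hat{A}})\delta\bigr)
=(\delta\otimes\Id_A)\gamma.
\]

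**Approach.** The first step is to unpack what $\delta$ is. By Theorem~\ref{the:equivalent_notion_of_homomorphisms}, viewing $\Rmat\in\U(\hat A\otimes\hat A)$ as a bicharacter/quantum group morphism $A\to\hat A$, the induced coaction is $\delta=(\Id_C\otimes f)\circ\gamma$ where $f\colon A\to\hat A$ is the morphism; more usefully, on the level of the relevant unitaries, $\delta$ is given by conjugating $\gamma$ against the leg of $\Rmat$. Concretely, since the morphism $A\to\hat A$ dual to $\Rmat$ sends the reduced bicharacter $\multunit\in\U(\hat A\otimes A)$ to a unitary expressible through $\Rmat$, the induced coaction satisfies
\[
(\Id_C\otimes\multunit)(\delta\otimes\Id_A)(x)(\Id_C\otimes\multunit)^*
=\text{(something built from }\gamma\text{ and }\Rmat_{23}\text{)}.
\]
Rather than fight with the morphism formalism directly, I would pass to a faithful covariant representation $(\varphi,\corep{U}^{\Hils})$ of $(C,\gamma)$ on a Hilbert space $\Hils$ and work with operators. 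Then $\corep{U}^{\Hils}\in\U(\Comp(\Hils)\otimes A)$ corresponds to a representation $\hat\pi$ of $\hat A^\univ$, and composing with the morphism dual to $\Rmat$ produces the corepresentation $\corep{V}^{\Hils}\in\U(\Comp(\Hils)\otimes\hat A)$ implementing $\delta$. The key computational input is that the pair $(\corep{U}^{\Hils},\corep{V}^{\Hils})$ built this way from $\Rmat$ is automatically $\Codouble{\G}$-compatible in the sense of Proposition~\ref{prop:corep_codoub_classify}, i.e.\ $\flip^{\multunit}_{23}(\corep{U}^{\Hils}_{12}\corep{V}^{\Hils}_{13})=\corep{V}^{\Hils}_{12}\corep{U}^{\Hils}_{13}$ in $\U(\Comp(\Hils)\otimes\hat A\otimes A)$. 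This identity is essentially the content of the $\Rmat$-matrix condition~\eqref{eq:reduced_R_mat_equivariant_cond}, rewritten in its equivalent form~\eqref{eq:equiv_red_R_mat_equivariant_cond}, after transporting $\Rmat$ from $\U(\hat A\otimes\hat A)$ to the mixed legs via $\multunit$; it is exactly the kind of manipulation carried out in the proof of Proposition~\ref{prop:univ_R_matrix} (equations~\eqref{eq:equality_of_coreps_in_third_leg} and its relatives). Once compatibility of the corepresentation pair is in hand, the identity~\eqref{eq:G-Yetter_Drinfeld} for $(\gamma,\delta)$ follows by conjugating $x\otimes1\otimes1$ through the two sides and using covariance: the left-hand side $\flip^{\multunit}_{23}((\gamma\otimes\Id_{\hat A})\delta)(x)$ equals conjugation of $x$ by $\flip^{\multunit}_{23}(\corep{U}^{\Hils}_{12}\corep{V}^{\Hils}_{13})$ (in suitable legs), and the right-hand side is conjugation by $\corep{V}^{\Hils}_{12}\corep{U}^{\Hils}_{13}$, so the two agree precisely by the compatibility relation.

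**The main obstacle.** The routine-looking part — leg bookkeeping and conjugation identities — is fine, but the genuine content is the claim that the induced $\delta$ coming from $\Rmat$ produces a $\Codouble{\G}$-compatible pair, equivalently that $\Rmat$ satisfies the pentagon-type relation matching the codouble's comultiplication. The cleanest way to see this, and the route I would take, is: the quantum group morphism $A\to\hat A$ determined by $\Rmat$ is, by definition of an $\Rmat$-matrix, compatible with $\Comult[A]$ in the twisted sense, and this is exactly what is needed for the pair of coactions it induces to land in $\YDcat(\G)$. So I would invoke the characterisation of $\Codouble{\G}$-compatible pairs from Proposition~\ref{prop:corep_codoub_classify} together with equation~\eqref{eq:equiv_red_R_mat_equivariant_cond}, reducing the lemma to a one-line check that the defining relation of an $\Rmat$-matrix \emph{is} the Yetter--Drinfeld relation after translating $\Rmat$ into a morphism $A\to\hat A$. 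The risk is that getting the several $\flip^{\multunit}$'s and the positions of $\multunit$ in~\eqref{eq:equiv_red_R_mat_equivariant_cond} to line up exactly with~\eqref{eq:G-Yetter_Drinfeld} requires careful tracking of which leg is which; I would do this bookkeeping on a faithful covariant representation as above to keep everything as operator identities in $\U(\Comp(\Hils)\otimes\hat A\otimes A)$, where the manipulations reduce to the already-established~\eqref{eq:equality_of_coreps_in_third_leg}-type moves from the proof of Proposition~\ref{prop:univ_R_matrix}.
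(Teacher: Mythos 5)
Your plan is correct in outline, but it is a genuinely different route from the paper's. The paper first reduces to the single case \((C,\gamma)=(A,\Comult[A])\), using that every \(\G\)\nb-\(\Cst\)-algebra embeds equivariantly into some \(D\otimes A\) with coaction \(\Id_D\otimes\Comult[A]\) (\cite{Meyer-Roy-Woronowicz:Twisted_tensor}*{Lemma 2.9}); then \(\delta=\Delta_R\) is characterised by \eqref{eq:def_V_via_right_homomorphism}, and the Yetter--Drinfeld relation \eqref{eq:G-Yetter_Drinfeld} follows from a short reduced-level computation with \(\multunit\), \eqref{eq:W_char_in_second_leg} and the dualised form \eqref{eq:R_mat_equiv_cond} of the \Rmattxt\nb-matrix condition, finished by slicing the first leg of \(\multunit\). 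No universal lifts and no covariant representations are needed. You instead work with an arbitrary faithful covariant representation of \((C,\gamma)\), produce the corepresentation implementing \(\delta\) by composing the associated representation of \(\hat{A}^\univ\) with the morphism attached to \(\Rmat\), show the resulting pair is \(\Codouble{\G}\)\nb-compatible in the sense of Proposition~\ref{prop:corep_codoub_classify}, and then conjugate; this is sound (it essentially re-derives one direction of the correspondence behind Propositions \ref{prop:corep_codoub_classify} and~\ref{prop:G_Yetter_Drinfeld}), but it costs more machinery: you must quote that induced maps on coactions and corepresentations preserve covariance, and the compatibility of the pair for an \emph{arbitrary} corepresentation is a half-universal identity of the form \(\multunit_{23}\,\dumaxcorep_{13}\,\Rmat'_{12}=\Rmat'_{12}\,\dumaxcorep_{13}\,\multunit_{23}\) with \(\Rmat'\) the lift of~\(\Rmat\) in its first leg, which needs the lifting argument of \cite{Meyer-Roy-Woronowicz:Homomorphisms}*{Lemma 4.6} that you gesture at (or, alternatively, exactly the reduction to \((A,\Comult[A])\) that the paper uses, where the regular corepresentation lets you read the identity off \eqref{eq:R_mat_equiv_cond}). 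Also be precise that \eqref{eq:equiv_red_R_mat_equivariant_cond} is stated for \(\Rmat\in\U(A\otimes A)\); for \(\Rmat\in\U(\hat{A}\otimes\hat{A})\) one must pass to the dual multiplicative unitary, which is how the paper arrives at \eqref{eq:R_mat_equiv_cond}. In short: your route buys a conceptual link to the codouble picture, the paper's buys brevity and stays entirely at the reduced level.
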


\begin{proof}
  Any object \(C\in\Cstcat(\G)\) is equivariantly isomorphic to a
  subobject of~\(D\otimes A\) with coaction~\(\Id_D\otimes\Comult[A]\)
  for some \(\Cst\)\nb-algebra~\(D\) by
  \cite{Meyer-Roy-Woronowicz:Twisted_tensor}*{Lemma~2.9}.  Since the
  tensor factor~\(D\) causes no problems, it suffices to prove the
  lemma for \((C,\gamma)= (A,\Comult[A])\); here \(\delta = \Delta_R\)
  is the right coaction characterised
  by~\eqref{eq:def_V_via_right_homomorphism} for \(\bichar=\Rmat\).

  The relation~\eqref{eq:equiv_red_R_mat_equivariant_cond} has to be
  modified for \(\Rmat\in\U(\hat{A}\otimes\hat{A})\) by taking the
  dual multiplicative unitaries because we use~\(\hat{A}\) instead
  of~\(A\).  This gives
  \begin{equation}
    \label{eq:R_mat_equiv_cond}
    \Rmat_{12}\multunit_{13}\multunit_{23}=\multunit_{23}\multunit_{13}\Rmat_{12}
    \qquad\text{in }\U(\hat{A}\otimes\hat{A}\otimes A).
  \end{equation}
  Equations \eqref{eq:def_V_via_right_homomorphism} for~\(\Rmat\),
  \eqref{eq:W_char_in_second_leg} and~\eqref{eq:R_mat_equiv_cond} give
  \begin{multline*}
    \flip^{\multunit}_{34}\bigl((\Id_{\hat{A}}\otimes
    (\Comult[A]\otimes\Id_{\hat{A}})\Delta_R)\multunit\bigr)
    = \multunit_{12} \flip^{\multunit}_{34}(\multunit_{13}\Rmat_{14})
    \\ = \multunit_{12} \multunit_{34}\multunit_{14}\Rmat_{13}\multunit[*]_{34}
    = \multunit_{12}\Rmat_{13}\multunit_{14}
    = (\Id_{\hat{A}}\otimes(\Delta_R\otimes\Id_A)\Comult[A])\multunit.
  \end{multline*}
  Finally, we slice the first leg of this equation with
  \(\omega\in\hat{A}'\).  This gives~\eqref{eq:G-Yetter_Drinfeld} for
  the pair~\((\Comult[A],\Delta_R)\) because slices of~\(\multunit\)
  generate~\(A\).
\end{proof}

Since a morphism between two \(\G\)\nb-\(\Cst\)-algebras is
\(\G\)\nb-equivariant if and only if it is \(\G\)- and
\(\DuG\)\nb-equivariant,
Lemma~\ref{lemm:G-Yetter_Drinfeld_from_R_mat} gives a fully faithful
embedding of~\(\Cstcat(\G)\) into \(\YDcat(\G)\cong
\Cstcat(\Codouble{\G})\) that leaves the underlying
\(\Cst\)\nb-algebras unchanged.  Thus an \(\Rmat\)\nb-matrix
for~\(\G\) induces a quantum group morphism \(\G\to\Codouble{\G}\).

\begin{theorem}
  \label{the:Codoub_quasi_triag}
  The embedding
  \((\Cstcat(\G),\boxtimes_{\Rmat})\to\YDcat(\G),\boxtimes_{\multunit})\)
  is monoidal.
\end{theorem}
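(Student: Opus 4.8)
Write $(C_i,\gamma_i)$ for the $\G$\nb-$\Cst$-algebras and $\delta_i$ for the $\DuG$\nb-coaction that~$\Rmat$ induces on $C_i$ as in Lemma~\ref{lemm:G-Yetter_Drinfeld_from_R_mat}, so the embedding~$F$ sends $(C_i,\gamma_i)$ to $(C_i,\gamma_i,\delta_i)$ and leaves underlying $\Cst$\nb-algebras, morphisms and $\G$\nb-coactions unchanged. The plan is to reduce monoidality of~$F$ to an identification of underlying $\Cst$\nb-algebras. On $C_1\boxtimes_{\Rmat}C_2$ the coaction $\gamma_1\bowtie_{\Rmat}\gamma_2$ together with its $\Rmat$\nb-induced $\DuG$\nb-coaction is a Yetter--Drinfeld structure, by Lemma~\ref{lemm:G-Yetter_Drinfeld_from_R_mat}; its restrictions to the embedded copies $\iota_{C_i}(C_i)$ are $\gamma_i$ and~$\delta_i$ (for the $\G$\nb-part by Proposition~\ref{pro:braide_tens_presv_cov_corep}, and the $\DuG$\nb-part is induced from it), and it is uniquely determined by these restrictions. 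The same holds for the canonical Yetter--Drinfeld structure on $F(C_1)\boxtimes_{\multunit}F(C_2)$, by the discussion following Theorem~\ref{the:Cat_equiv_YD_and_codoub}. Hence it suffices to produce an isomorphism $C_1\boxtimes_{\Rmat}C_2\cong C_1\boxtimes_{\multunit}C_2$ of $\Cst$\nb-algebras intertwining the canonical embeddings of $C_1$ and~$C_2$: it is then automatically an isomorphism of Yetter--Drinfeld algebras, and since $F(\C)=\C$ and the associators and unit constraints are likewise pinned down by their effect on the embedded tensor factors (as in Theorems~\ref{the:Braided_coaction_Cat} and~\ref{the:Cat_equiv_YD_and_codoub}), the functor~$F$ is monoidal.

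The core point is the same computation as in the proof of Theorem~\ref{the:Cat_equiv_YD_and_codoub}. Choose faithful covariant representations $(\varphi_i,\corep{U}^{\Hils_i})$ of $(C_i,\gamma_i)$, and let $\hat\phi_i\in\Mor(\hat{A}^\univ,\Comp(\Hils_i))$ be the corresponding representations of~$\hat{A}^\univ$. By Theorem~\ref{the:crossed_prod}, $C_1\boxtimes_{\Rmat}C_2$ is the $\Cst$\nb-subalgebra of $\Bound(\Hils_1\otimes\Hils_2)$ generated by $\varphi_1(C_1)\otimes 1$ and $Z_{\Rmat}(1\otimes\varphi_2(C_2))Z_{\Rmat}^*$, with $Z_{\Rmat}=(\hat\phi_1\otimes\hat\phi_2)(\UnivRmat)^*$ by \eqref{eq:tilde_psi_via_braiding}, \eqref{eq:Yang_baxter_unit_corep} and~\eqref{eq:braiding_induced_by_R-matrix}. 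On the other hand, by~\eqref{eq:def_V_via_right_homomorphism} and Lemma~\ref{lemm:G-Yetter_Drinfeld_from_R_mat} the $\DuG$\nb-coaction~$\delta_2$ is transported from~$\gamma_2$ along the quantum group homomorphism $\G\to\DuG$ encoded by the bicharacter~$\Rmat$, so $(\varphi_2,\corep{V}^{\Hils_2})$ is a faithful covariant representation of $(C_2,\delta_2)$, where $\corep{V}^{\Hils_2}$ is obtained from $\corep{U}^{\Hils_2}$ by applying, in the leg carrying~$A$, the universal homomorphism $A^\univ\to\Mult(\hat{A}^\univ)$ attached to~$\Rmat$. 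Using these covariant representations, Theorem~\ref{the:crossed_prod} realises $C_1\boxtimes_{\multunit}C_2$ on $\Hils_1\otimes\Hils_2$ with the same embedding $\varphi_1(C_1)\otimes 1$ of~$C_1$ and with $C_2$ embedded through a twisting unitary $Z_{\multunit}$ characterised by~\eqref{eq:commutation_between_corep_of_two_quantum_groups} for the bicharacter~$\multunit$ and the corepresentations $\corep{U}^{\Hils_1}$ and~$\corep{V}^{\Hils_2}$. Since the homomorphism attached to~$\Rmat$ is the one whose application to the appropriate leg of~$\multunit{}^\univ$ returns~$\UnivRmat$ --- exactly the relation defining it in~\cite{Meyer-Roy-Woronowicz:Homomorphisms} and the relation used in~\eqref{eq:def_V_via_right_homomorphism} --- the unitary $Z_{\multunit}$ collapses to $(\hat\phi_1\otimes\hat\phi_2)(\UnivRmat)^*=Z_{\Rmat}$. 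Thus $C_1\boxtimes_{\multunit}C_2$ and $C_1\boxtimes_{\Rmat}C_2$ are the same $\Cst$\nb-subalgebra of $\Bound(\Hils_1\otimes\Hils_2)$ with the same embeddings, which is what was needed.

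I expect the main obstacle to be making this last collapse precise: one must line up the conventions of~\cite{Meyer-Roy-Woronowicz:Homomorphisms} relating a bicharacter, the two universal homomorphisms it induces, and the induced functors on corepresentations and on coactions, with the conventions used in~\cite{Meyer-Roy-Woronowicz:Twisted_tensor} to build $\boxtimes_{\multunit}$, and keep careful track of which leg carries~$\G$ and which carries~$\DuG$ (and of the flips in passing between them) --- the same bookkeeping already carried out in the proof of Theorem~\ref{the:Cat_equiv_YD_and_codoub}. Equivalently, one can read the statement as the assertion that the functor $\Corepcat(\G)\to\Corepcat(\Codouble{\G})$ induced by the quantum group morphism $\G\to\Codouble{\G}$ attached to~$\Rmat$ is braided --- because that morphism carries the canonical \Rmattxt\nb-matrix of~$\Codouble{\G}$ to~$\Rmat$ --- and then combine this with the monoidal equivalence $(\YDcat(\G),\boxtimes_{\multunit})\simeq(\Cstcat(\Codouble{\G}),\boxtimes_{\Rmat})$ of Theorem~\ref{the:Cat_equiv_YD_and_codoub}; this reformulation rests on the same identification of unitaries. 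Once $Z_{\Rmat}=Z_{\multunit}$ is in hand, the natural isomorphism, the pentagon and the unit coherence for~$F$ are formal, since they hold for $\boxtimes_{\Rmat}$ by Theorem~\ref{the:Braided_coaction_Cat} and for $\boxtimes_{\multunit}$ via Theorem~\ref{the:Cat_equiv_YD_and_codoub}, and all of them are detected on the generating embedded copies of the tensor factors.
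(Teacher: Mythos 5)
Your proposal is correct and follows essentially the same route as the paper: identify the underlying crossed products \(C_1\boxtimes_{\Rmat}C_2\cong C_1\boxtimes_{\multunit}C_2\), transfer the Yetter--Drinfeld structures by the uniqueness of coactions determined on the embedded copies of \(C_1\) and~\(C_2\) (using naturality of the \(\Rmat\)-induced \(\hat{A}\)-coactions), and obtain the monoidal coherence conditions for free because the isomorphism is one of crossed products. The only cosmetic difference is that the paper gets the crossed-product isomorphism by citing \cite{Meyer-Roy-Woronowicz:Twisted_tensor}*{Example 5.4}, whereas you re-derive it by checking \(Z_{\Rmat}=Z_{\multunit}\) on faithful covariant representations, i.e.\ by inlining the same computation already carried out in the proof of Theorem~\ref{the:Cat_equiv_YD_and_codoub}.
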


\begin{proof}
  Let \(C\) and~\(D\) be two \(\G\)\nb-\(\Cst\)-algebras, equip them
  with the \(\G\)\nb-Yetter--Drinfeld structure described above.  As
  a \(\Cst\)\nb-algebra, we have an isomorphism of crossed products
  \(C\boxtimes_{\Rmat} D \cong C\boxtimes_{\multunit} D\) by
  \cite{Meyer-Roy-Woronowicz:Twisted_tensor}*{Example 5.4}.  The
  induced \(\G\)\nb-Yetter--Drinfeld algebra structure on
  \(C\boxtimes_{\multunit} D\) is the unique one for which the
  embeddings of \(C\) and~\(D\) are equivariant: combine
  Proposition~\ref{pro:braide_tens_presv_cov_corep} with the
  equivalence of \(\G\)\nb-Yetter--Drinfeld algebra structures and
  \(\Codouble{\G}\)-coactions.  Similarly, the induced
  \(A\)\nb-coaction on \(C\boxtimes_{\Rmat} D\) is the unique one
  for which the embeddings of \(C\) and~\(D\) are
  \(A\)\nb-equivariant.  Since the \(\hat{A}\)\nb-coactions
  constructed from~\(\Rmat\) and an \(A\)\nb-coaction are natural,
  the embeddings of \(C\) and~\(D\) into \(C\boxtimes_{\Rmat} D\)
  are also \(\hat{A}\)\nb-equivariant.  Hence the
  \(\G\)\nb-Yetter--Drinfeld algebra structures on
  \(C\boxtimes_{\Rmat} D\) and \(C\boxtimes_{\multunit} D\) are the
  same as well.  Since the isomorphism between these tensor products
  is one of crossed products, it automatically satisfies the
  coherence conditions required for a monoidal functor.
\end{proof}

\section{Braided \texorpdfstring{$C^*$}{C*}-bialgebras and braided
  compact quantum groups}
\label{sec:braid_C_star_bialg}

We are going to define braided \(\Cst\)\nb-bialgebras and use them to
construct ordinary \(\Cst\)\nb-bialgebras by a semidirect product
construction, which is the \(\Cst\)\nb-analogue of what Majid calls
``bosonisation'' in~\cite{Majid:Hopfalg_in_BrdCat}.  We check that
the semidirect product
\(\Cst\)\nb-bialgebra is bisimplifiable if and only if the braided
\(\Cst\)\nb-bialgebra is bisimplifiable.  Thus we may construct
compact quantum groups from two pieces: an ordinary compact quantum
group and a braided quantum group over its codouble.

\begin{definition}
  \label{def:Braid_bialg}
  A \emph{braided \(\Cst\)\nb-bialgebra} over a quasitriangular
  quantum group \(\G=(A,\Comult[A],\Rmat)\) is a
  \(\G\)\nb-\(\Cst\)-algebra \((B,\beta)\) with a
  \(\G\)\nb-equivariant morphism \(\Comult[B]\colon B\to
  B\boxtimes_{\Rmat}B\) which is coassociative:
  \begin{equation}
    \label{eq:braid_coasso}
    (\Comult[B]\boxtimes_{\Rmat}\Id_B)\circ\Comult[B]
    =(\Id_B\boxtimes_{\Rmat}\Comult[B])\circ\Comult[B].
  \end{equation}
  We call~\(\Bialg{B}\) \emph{bisimplifiable} if it satisfies the
  braided Podle\'s conditions
  \begin{equation}
    \label{eq:braided_Podles_cond}
    \Comult[B](B)\cdot \iota_1(B)
    = B\boxtimes_{\Rmat}B
    =\Comult[B](B)\cdot \iota_2(B),
  \end{equation}
  where \(\iota_1\) and~\(\iota_2\) denote the two canonical maps
  \(B\rightrightarrows B\boxtimes_{\Rmat}B\).

  A \emph{braided compact quantum group} over~\(\G\) is a unital,
  bisimplifiable braided \(\Cst\)\nb-bialgebra~\(\Bialg{B}\)
  over~\(\G\).
\end{definition}

In the following, we let \(\G=(A,\Comult[A])\) be \emph{any}
\(\Cst\)\nb-quantum group, and we let~\((B,\Comult[B])\) be a
braided \(\Cst\)\nb-bialgebra over the codouble~\(\Codouble{\G}\)
with its canonical \Rmat\nb-matrix.  Equivalently, \(B\)~is a
\(\Cst\)\nb-bialgebra in the category of \(\G\)\nb-Yetter--Drinfeld
\(\Cst\)\nb-algebras (see
Theorem~\ref{the:Cat_equiv_YD_and_codoub}).  Thus we do not
assume~\(A\) to be quasitriangular any more.  Since we may embed the
coaction category of a quasitriangular \(\Cst\)\nb-quantum group
into the one for its codouble by
Theorem~\ref{the:Codoub_quasi_triag}, our new setting is more
general than the one in Definition~\ref{def:Braid_bialg}.

The monoidal structure on \(\G\)\nb-Yetter--Drinfeld algebras is
given by the tensor product \(C\boxtimes_{\multunit} D\) for the
bicharacter~\(\multunit\) by
Theorem~\ref{the:Cat_equiv_YD_and_codoub}.  So the underlying
\(\Cst\)\nb-algebra only uses the coaction of~\(A\) on~\(C\) and the
coaction of~\(\hat{A}\) on~\(D\).  Both coactions are used to equip
\(C\boxtimes_{\multunit} D\) with a Yetter--Drinfeld algebra
structure, which we need to form tensor products of more than two
factors.  We abbreviate \(\boxtimes=\boxtimes_{\multunit}\).

The \(\Cst\)\nb-algebra~\(A\) carries the canonical continuous
coaction~\(\Comult[A]\) of~\(A\) and a canonical coaction
of~\(\hat{A}\) by \(\Ad(\Dumultunit)\colon a\mapsto
\Dumultunit(a\otimes 1_{\hat{A}})\Dumultunit[*]\).  These two
coactions satisfy the Yetter--Drinfeld compatibility condition.  The
Podle\'s condition for the \(\hat{A}\)\nb-coaction on~\(A\) is not
automatic, however: it is a weak form of regularity.  Since we do
not want to impose any regularity condition on~\(\G\), we make sure
that we do not use the coaction of~\(\hat{A}\) on~\(A\) in the
following constructions.  The \(A\)\nb-coaction~\(\Comult[A]\)
on~\(A\) is enough to define the twisted tensor products
\(A\boxtimes B\) and \(A\boxtimes (B\boxtimes B)\).

\begin{lemma}
  \label{lem:A_coact_AB}
  There are unique coactions of \(A\) and~\(\hat{A}\) on
  \(A\boxtimes B\) and \(A\boxtimes (B\boxtimes B')\) for which the
  canonical embeddings of \(A\), \(B\) and~\(B'\) are equivariant;
  the \(A\)\nb-coactions are continuous, the
  \(\hat{A}\)\nb-coactions are injective, but do not necessarily
  satisfy the Podle\'s condition.  The coactions of \(A\)
  and~\(\hat{A}\) are compatible.  There is a canonical isomorphism
  of triple crossed products \(A\boxtimes (B\boxtimes B') \cong
  (A\boxtimes B)\boxtimes B'\), which is equivariant for the
  coactions of \(A\) and~\(\hat{A}\).
\end{lemma}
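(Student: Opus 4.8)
The plan is to transcribe the constructions of Proposition~\ref{pro:braide_tens_presv_cov_corep} and Theorem~\ref{the:Braided_coaction_Cat}, applied to the codouble~\(\Codouble{\G}\) and its canonical \(\Rmat\)\nb-matrix and read through the equivalence \(\Cstcat(\Codouble{\G})\cong\YDcat(\G)\) of Theorem~\ref{the:Cat_equiv_YD_and_codoub}, keeping track that the \emph{only} place where the Podle\'s condition for the \(\hat{A}\)\nb-coaction \(\Ad(\Dumultunit)\) on~\(A\) would enter is the (in general false, hence omitted) Podle\'s condition for the \(\hat{A}\)\nb-coactions on the tensor products.  First I record the inputs.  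The algebras \(B\) and~\(B'\) are \(\G\)\nb-Yetter--Drinfeld \(\Cst\)\nb-algebras, so \(B\boxtimes B'=B\boxtimes_{\multunit}B'\) is one as well, with continuous coactions of~\(A\) and of~\(\hat{A}\), by the discussion after Theorem~\ref{the:Cat_equiv_YD_and_codoub}.  The algebra~\(A\) carries the continuous coaction~\(\Comult[A]\) of~\(A\) and the injective coaction \(\Ad(\Dumultunit)\) of~\(\hat{A}\), and these are Yetter--Drinfeld\nb-compatible.  Since \(C\boxtimes_{\multunit}D\) uses only the \(A\)\nb-coaction on~\(C\) and the \(\hat{A}\)\nb-coaction on~\(D\), and \(\Comult[A]\) on~\(A\) together with the \(\hat{A}\)\nb-coactions on~\(B\) and on~\(B\boxtimes B'\) are continuous, the twisted tensor products \(A\boxtimes B\) and \(A\boxtimes(B\boxtimes B')\) are defined by~\cite{Meyer-Roy-Woronowicz:Twisted_tensor}.

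To produce the coactions on \(A\boxtimes B\), I choose a faithful nondegenerate representation \(\varphi\colon A\to\Bound(\Hils)\) together with corepresentations \(\corep{U}^{\Hils}\in\U(\Comp(\Hils)\otimes A)\) and \(\corep{V}^{\Hils}\in\U(\Comp(\Hils)\otimes\hat{A})\) that form a \(\Codouble{\G}\)\nb-compatible pair and implement \(\Comult[A]\) and~\(\Ad(\Dumultunit)\) on~\(\varphi(A)\); such a Heisenberg\nb-pair representation exists (see Example~\ref{exa:Heisenberg_double} and Proposition~\ref{pro:rep_cros_vs_heis}).  I choose likewise a faithful covariant representation \((\psi,\corep{U}^{\Hils[K]},\corep{V}^{\Hils[K]})\) of~\(B\), and realize \(A\boxtimes B\subseteq\Bound(\Hils\otimes\Hils[K])\) as \(\varphi_1(A)\cdot\tilde\psi_2(B)\) by Theorem~\ref{the:crossed_prod} and~\eqref{eq:tilde_psi_via_braiding}.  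Then, just as in the proof of Proposition~\ref{pro:braide_tens_presv_cov_corep}, conjugating this subalgebra by \(\corep{U}^{\Hils}\tenscorep\corep{U}^{\Hils[K]}\) gives an \(A\)\nb-coaction~\(\alpha\) under which \(\iota_A\) and~\(\iota_B\) are equivariant: \(\varphi_1\) is covariant because it acts on the first leg only, and~\(\tilde\psi_2\) is covariant because the twisting unitary in~\eqref{eq:tilde_psi_via_braiding} is built from the braiding of~\(\Codouble{\G}\), which is \(\G\)\nb-equivariant by Proposition~\ref{prop:braiding_quasi_qgrp_and_R_matrix} composed with the morphism \(\Codouble{\G}\to\G\).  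The Podle\'s condition for~\(\alpha\) follows from those for \(\Comult[A]\) and for the \(A\)\nb-coaction on~\(B\) by the computation in Proposition~\ref{pro:braide_tens_presv_cov_corep}, so~\(\alpha\) is continuous.  Conjugating instead by \(\corep{V}^{\Hils}\tenscorep\corep{V}^{\Hils[K]}\) defines an \(\hat{A}\)\nb-coaction~\(\alpha'\) with \(\iota_A,\iota_B\) equivariant; it is injective since conjugation by a unitary is, but its Podle\'s condition would require that of \(\Ad(\Dumultunit)\) on~\(A\) and is not claimed.  The pair \((\alpha,\alpha')\) satisfies~\eqref{eq:G-Yetter_Drinfeld} because both sides are \Star{}homomorphisms that agree on \(\iota_A(A)\) and on \(\iota_B(B)\) — by the Yetter--Drinfeld conditions for~\(A\) and for~\(B\), respectively — hence on \(\iota_A(A)\cdot\iota_B(B)=A\boxtimes B\); equivalently, \(\tenscorep\) preserves \(\Codouble{\G}\)\nb-compatibility of pairs of corepresentations by Proposition~\ref{prop:corep_codoub_classify}.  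Uniqueness of~\(\alpha\) and of~\(\alpha'\) is immediate, since any such coaction is determined on \(\iota_A(A)\cdot\iota_B(B)=A\boxtimes B\).

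For \(A\boxtimes(B\boxtimes B')\) I repeat the argument with three faithful covariant representations on \(\Hils\), \(\Hils[K]\), \(\Hils[L]\), defining the \(A\)\nb- and \(\hat{A}\)\nb-coactions by conjugating the realized subalgebra of \(\Bound(\Hils\otimes\Hils[K]\otimes\Hils[L])\) by \(\corep{U}^{\Hils}\tenscorep\corep{U}^{\Hils[K]}\tenscorep\corep{U}^{\Hils[L]}\), respectively by the analogous product of the \(\corep{V}\)\nb-corepresentations; these restrict along the canonical embeddings to the coactions already built on \(A\boxtimes B\) and on \(B\boxtimes B'\), so they are the unique coactions for which \(\iota_A\), \(\iota_B\), \(\iota_{B'}\) are equivariant, and continuity, injectivity and compatibility follow as before.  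Finally, the computation of Theorem~\ref{the:Braided_coaction_Cat} — which uses only the hexagon identities of Proposition~\ref{prop:braiding_quasi_qgrp_and_R_matrix} and Corollary~\ref{cor:Yang-Baxter_rmat_braid}, all available for~\(\Codouble{\G}\) — shows that \(A\boxtimes(B\boxtimes B')\) and \((A\boxtimes B)\boxtimes B'\) are realized as the \emph{same} \(\Cst\)\nb-subalgebra of \(\Bound(\Hils\otimes\Hils[K]\otimes\Hils[L])\); since the \(A\)\nb- and \(\hat{A}\)\nb-coactions on both are conjugation by the same triple tensor corepresentations, this identification is an equivariant isomorphism of triple crossed products.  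The one genuinely delicate point is the bookkeeping in the middle paragraph: one must verify that the proofs of Proposition~\ref{pro:braide_tens_presv_cov_corep} and Theorem~\ref{the:Braided_coaction_Cat} apply verbatim to the pair of coactions on~\(A\) even though~\(A\) is not an object of \(\Cstcat(\Codouble{\G})\), and that the Podle\'s condition is the unique ingredient that fails; everything else is a direct transcription of earlier proofs together with the existence of a faithful Heisenberg\nb-pair representation of~\(A\).
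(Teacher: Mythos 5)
Your proposal is correct and follows essentially the same route as the paper: realise \(A\boxtimes B\) (and the triple product) via a faithful representation of \(A\) covariant for both \(\Comult[A]\) and \(\Ad(\Dumultunit)\) together with a faithful covariant representation of the Yetter--Drinfeld algebra \(B\), transport the coactions by conjugation with the tensor corepresentations as in Proposition~\ref{pro:braide_tens_presv_cov_corep}, observe that only the Podle\'s condition for the \(\hat{A}\)\nb-coaction is lost, and reuse the associator computation of Theorem~\ref{the:Braided_coaction_Cat}. The only cosmetic difference is that the paper produces the needed covariant representation of \((A,\Comult[A],\Ad(\Dumultunit))\) explicitly from a manageable multiplicative unitary (via \(\pi\), \(\hat\pi\), \((\hat\pi\otimes\Id_A)\multunit\) and \((\pi\otimes\Id_{\hat A})\Dumultunit\)) rather than by appealing to Heisenberg pairs, but this is the same construction.
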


\begin{proof}
  If the \(\hat{A}\)\nb-coaction on~\(A\) were continuous, our
  previous theory for coactions of the codouble of~\(\G\) would give
  all the statements immediately.

  Let \(\Multunit\in\U(\Hils\otimes\Hils)\) be a manageable
  multiplicative unitary generating~\((A,\Comult[A])\).  Let
  \(\pi\colon A\to \Bound(\Hils)\) and \(\hat\pi\colon
  \hat{A}\to\Bound(\Hils)\) be the resulting representations.  The
  unitaries \((\hat\pi\otimes\Id_A)(\multunit)\in
  \U(\Comp(\Hils)\otimes A)\) and
  \((\pi\otimes\Id_{\hat{A}})(\Dumultunit)\in\U(\Comp(\Hils)\otimes
  \hat{A})\) are corepresentations because
  \(\multunit\in\U(\hat{A}\otimes A)\) and \(\Dumultunit\in\U(A\otimes
  \hat{A})\) are bicharacters.  These two corepresentations together
  with~\(\pi\) form a faithful covariant representation of
  \((A,\Comult[A],\Ad(\Dumultunit))\).  Moreover, the
  corepresentations \((\hat\pi\otimes\Id_A)(\multunit)\) and
  \((\pi\otimes\Id_{\hat{A}})(\Dumultunit)\) satisfy the
  Yetter--Drinfeld compatibility condition, so they give a
  corepresentation of the codouble~\(\Codouble{\G}\).

  Let \(\beta\colon B\to B\otimes A\) and \(\hat\beta\colon B\to
  B\otimes\hat{A}\) denote the coactions of \(A\) and~\(\hat{A}\)
  on~\(B\) that give the Yetter--Drinfeld algebra structure
  on~\(B\).  We may choose a faithful covariant representation
  \((\rho,U,V)\) of \((B,\beta,\hat\beta)\) on some Hilbert
  space~\(\Hils[K]\).  Thus \(U\) and~\(V\) satisfy the
  Yetter--Drinfeld compatibility condition, so they give a
  corepresentation of the codouble~\(\Codouble{\G}\).

  Now we represent \(A\boxtimes B\) faithfully on
  \(\Hils\otimes\Hils[K]\).  This gives a \(\Cst\)\nb-algebra even
  if~\(\Ad(\Dumultunit)\) is not continuous because the construction
  of \(A\boxtimes B=A\boxtimes_{\multunit} B\) only uses the
  \(A\)\nb-coaction~\(\Comult[A]\) on~\(A\) and the
  \(\hat{A}\)\nb-coaction~\(\hat\beta\) on~\(B\).  The codouble
  of~\(\G\) acts on \(\Hils\otimes\Hils[K]\) by the usual tensor
  product corepresentation.  As in the proof of
  Proposition~\ref{pro:braide_tens_presv_cov_corep}, we get a unique
  coaction of~\(\Codouble{\G}\) on~\(A\boxtimes B\) for which its
  representation on~\(\Hils\otimes\Hils[K]\) and the embeddings of
  \(A\) and~\(B\) are equivariant.  Only the proof of the Podle\'s
  condition breaks down because we do not know the Podle\'s
  condition for the \(\Codouble{\G}\)-coaction on~\(A\).  We may,
  however, split the \(\Codouble{\G}\)-coaction into compatible
  coactions of \(A\) and~\(\hat{A}\), and prove the Podle\'s
  condition for the coaction of~\(A\), just as in the proof of
  Proposition~\ref{pro:braide_tens_presv_cov_corep}, using only the
  \(A\)\nb-equivariance of the embeddings of \(A\) and~\(B\)
  into~\(A\boxtimes B\) and the Podle\'s conditions for the
  \(A\)\nb-coactions on \(A\) and~\(B\).

  Similarly, the construction of the associator \((A\boxtimes
  B)\boxtimes B' \cong A\boxtimes (B\boxtimes B')\) in the proof of
  Theorem~\ref{the:Braided_coaction_Cat} still works, using the
  covariant representation of \(A\boxtimes B\) just constructed, and
  gives the remaining statements.
\end{proof}

Our goal is to construct a coassociative comultiplication on \(C\defeq
A\boxtimes B\) from a braided comultiplication \(\Comult[B]\colon B\to
B\boxtimes B\).  The first ingredient is the morphism
\[
\Id_A\boxtimes \Comult[B]\colon A\boxtimes B \to
A\boxtimes (B\boxtimes B),
\]
which is the unique one with \((\Id_A\boxtimes \Comult[B])\circ
\iota_A=\iota_A\) and \((\Id_A\boxtimes \Comult[B])\circ
\iota_B=\iota_{B\boxtimes B}\circ \Comult[B]\).  Next we construct a
canonical map
\[
\Psi\colon A\boxtimes B\boxtimes B \to
(A\boxtimes B)\otimes (A\boxtimes B).
\]
Under regularity assumptions on~\(\G\), we could construct this by
composing the canonical morphism
\[
j_{124}\colon A\boxtimes B\boxtimes B \to
A\boxtimes B\boxtimes A\boxtimes B
\]
with an isomorphism \(A\boxtimes B\boxtimes A\boxtimes B \cong
(A\boxtimes B)\otimes (A\boxtimes B)\), which exists because the
\(\hat{A}\)\nb-coaction on~\(A\) is inner (see
\cite{Meyer-Roy-Woronowicz:Twisted_tensor}*{Corollary 5.16}).  The
following proposition constructs~\(\Psi\) directly without regularity
assumptions on~\(\G\):

\begin{proposition}
  \label{prop:isom_crossed_and_tensor_prod}
  Let \(B\) and~\(B'\) be \(\G\)\nb-Yetter--Drinfeld algebras, let
  \(\beta\colon B\to B\otimes A\) be the \(A\)\nb-coaction.  There
  is a unique injective morphism
  \[
  \Psi\colon A\boxtimes B\boxtimes B' \to (A\boxtimes
  B)\otimes(A\boxtimes B')
  \]
  that satisfies, for \(a\in A\), \(b\in B\), \(b'\in B'\),
  \begin{equation}
    \label{eq:isomorph_def}
    \begin{aligned}
      \Psi\circ \iota_A(a)&=(\iota_A\otimes \iota_A)\circ\Comult[A](a),\\
      \Psi\circ \iota_B(b)&=(\iota_B\otimes \iota_A)\circ\beta(b),\\
      \Psi\circ \iota_{B'}(b')&=1_{A\boxtimes B}\otimes \iota_{B'}(b').
    \end{aligned}
  \end{equation}
\end{proposition}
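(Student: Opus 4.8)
The plan is to build $\Psi$ from the universal property of the iterated twisted tensor product and then to establish injectivity by recognising the resulting realisation as coming from genuine faithful covariant representations; the latter step is the real obstacle, since no regularity of~$\G$ is available. Throughout I would write $\kappa\colon A\boxtimes B\to(A\boxtimes B)\otimes A$ for the continuous $A$\nb-coaction from Lemma~\ref{lem:A_coact_AB} (so $\kappa\circ\iota_A=(\iota_A\otimes\Id_A)\Comult[A]$ and $\kappa\circ\iota_B=(\iota_B\otimes\Id_A)\beta$), and similarly $\kappa'$, $\hat\kappa'$ for the $A$\nb- and $\hat A$\nb-coactions on $A\boxtimes B'$ supplied by the same lemma: $\kappa'$ is continuous, $\hat\kappa'$ is injective and, from the construction in that proof, is implemented by a genuine corepresentation $\hat U_{\kappa'}$ on the Hilbert space carrying $A\boxtimes B'$. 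I also use the isomorphism of triple crossed products $A\boxtimes B\boxtimes B'\cong(A\boxtimes B)\boxtimes_\multunit B'$ from Theorem~\ref{the:Braided_coaction_Cat} (valid here by Lemma~\ref{lem:A_coact_AB}), in which the factor $A\boxtimes B$ carries~$\kappa$ and the factor $B'$ carries $\hat\beta'$.

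First I would assemble the three prescribed maps into a compatible pair. The morphisms $(\iota_A\otimes\iota_A)\Comult[A]\colon A\to\Mult((A\boxtimes B)\otimes(A\boxtimes B'))$ and $(\iota_B\otimes\iota_A)\beta\colon B\to\Mult((A\boxtimes B)\otimes(A\boxtimes B'))$ are precisely $\bigl(\Id_{A\boxtimes B}\otimes\iota_A\bigr)\circ\kappa$ precomposed with $\iota_A$ resp.\ $\iota_B$ into $A\boxtimes B$, so they glue to the single morphism
\[
f\defeq\bigl(\Id_{A\boxtimes B}\otimes\iota_A\bigr)\circ\kappa\colon A\boxtimes B\to\Mult\bigl((A\boxtimes B)\otimes(A\boxtimes B')\bigr),
\]
while the third prescribed map is $g\defeq 1_{A\boxtimes B}\otimes\iota_{B'}\colon B'\to\Mult((A\boxtimes B)\otimes(A\boxtimes B'))$. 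Both are nondegenerate because $\kappa$, $\iota_A$ and $\iota_{B'}$ are. By the universal property of the twisted tensor product $(A\boxtimes B)\boxtimes_\multunit B'$ (\cite{Meyer-Roy-Woronowicz:Twisted_tensor}*{Lemma 5.5} together with Theorem~\ref{the:crossed_prod}), to produce $\Psi$ with $\Psi\circ\iota_{A\boxtimes B}=f$ and $\Psi\circ\iota_{B'}=g$ it remains to verify that $(f,g)$ satisfies the $\multunit$\nb-twisted covariance relation attached to the $A$\nb-coaction $\kappa$ on $A\boxtimes B$ and the $\hat A$\nb-coaction $\hat\beta'$ on $B'$. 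This is the computational heart of the construction: since $f$ feeds the $A$\nb-direction of $\kappa$ into $\Mult(A\boxtimes B')$ through $\iota_A$, $g$ feeds $B'$ into $\Mult(A\boxtimes B')$ through $\iota_{B'}$, and the $A\boxtimes B$\nb-part of $f$ acts on a tensor leg disjoint from $g$, the relation for $(f,g)$ collapses to the defining relation of $A\boxtimes B'=A\boxtimes_\multunit B'$ between $\iota_A(A)$ and $\iota_{B'}(B')$; one uses here that $\iota_A\colon A\to A\boxtimes B'$ is $\G$\nb-equivariant by Proposition~\ref{pro:braide_tens_presv_cov_corep}, so that $\kappa$ transported through $\iota_A$ becomes $\Comult[A]$ transported through the coaction of $A\boxtimes B'$, together with the Yetter--Drinfeld compatibility of $\beta$ and $\hat\beta$ on~$B$. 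Comparing on the generators $\iota_A(A)$, $\iota_B(B)$, $\iota_{B'}(B')$ gives~\eqref{eq:isomorph_def}, and uniqueness of any such $\Psi$ is immediate because $\iota_A(A)\cdot\iota_B(B)\cdot\iota_{B'}(B')=A\boxtimes B\boxtimes B'$; nondegeneracy of $\Psi$ follows from that of $f$ and $g$.

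Finally I would prove injectivity, which is the main obstacle: without regularity on~$\G$ the $\hat A$\nb-coaction on~$A$, hence on $A\boxtimes B'$, need not be continuous, so one cannot argue via the canonical morphism $j_{124}\colon A\boxtimes B\boxtimes B'\to A\boxtimes B\boxtimes A\boxtimes B'$ and an isomorphism $A\boxtimes B\boxtimes A\boxtimes B'\cong(A\boxtimes B)\otimes(A\boxtimes B')$. Instead the plan is to recover a faithful representation of $A\boxtimes B\boxtimes B'$ directly from $\Psi$. Coassociativity of $\kappa$ and the identity $(\iota_A\otimes\Id_A)\Comult[A]=\kappa'\circ\iota_A$ show that $(f\otimes\Id_A)\kappa=(\Id_{A\boxtimes B}\otimes\kappa')\circ f$, so $f$ is covariant for $\kappa$ with covering corepresentation $1_{A\boxtimes B}\otimes U_{\kappa'}$, where $U_{\kappa'}$ implements the continuous coaction $\kappa'$ on $A\boxtimes B'$; likewise $g$ is covariant for $\hat\beta'$ with covering corepresentation $1_{A\boxtimes B}\otimes\hat U_{\kappa'}$, using the corepresentation $\hat U_{\kappa'}$ implementing $\hat\kappa'$ from Lemma~\ref{lem:A_coact_AB}. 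Since $f$ and $g$ are injective (for $f$ this uses that $\kappa$ is injective and $\Id_{A\boxtimes B}\otimes\iota_A$ is injective because $\iota_A$ is), $(f,1\otimes U_{\kappa'})$ and $(g,1\otimes\hat U_{\kappa'})$ are faithful covariant representations of $(A\boxtimes B,\kappa)$ and $(B',\hat\beta')$, and by construction the pair $(f,g)$ sits in exactly the relative position required by the definition of the twisted tensor product in~\cite{Meyer-Roy-Woronowicz:Twisted_tensor} (with $g$ the conjugate of the untwisted embedding $1_{A\boxtimes B}\otimes1_{A\boxtimes B}\otimes(\,\cdot\,)$ of $B'$ by the unitary relating these two corepresentations through~$\multunit$). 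Hence $\Psi(A\boxtimes B\boxtimes B')=\overline{f(A\boxtimes B)\cdot g(B')}$ is a faithful copy of $(A\boxtimes B)\boxtimes_\multunit B'=A\boxtimes B\boxtimes B'$, so $\Psi$ is injective. I expect the delicate point to be precisely this identification of the configuration $(f,g)$ with the twisted‑tensor‑product construction in the absence of a Podle\'s condition for $\hat\kappa'$; note also that $\Psi$ is generally not surjective — already for $B=B'=\C$ it reduces to the comultiplication $\Comult[A]\colon A\to\Mult(A\otimes A)$.
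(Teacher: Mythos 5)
You have identified the correct target morphism: your $f=(\Id_{A\boxtimes B}\otimes\iota_A)\circ\kappa$ and $g=1_{A\boxtimes B}\otimes\iota_{B'}$ are exactly the restrictions of the paper's $\Psi$ to the three generating copies. But the construction you propose rests on a universal property that the twisted tensor product does not have. \cite{Meyer-Roy-Woronowicz:Twisted_tensor}*{Lemma 5.5} is functoriality of $\boxtimes$ for \emph{equivariant morphisms between coaction algebras}; neither it nor Theorem~\ref{the:crossed_prod} provides an integration result saying that a pair of morphisms of $A\boxtimes B$ and $B'$ into a common multiplier algebra satisfying some ``$\multunit$\nb-twisted covariance relation'' extends to a morphism on $(A\boxtimes B)\boxtimes_{\multunit} B'$. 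The twisted tensor product is only \emph{defined} through faithful covariant Hilbert space representations placed in a specific relative position, namely $\varphi_1=\varphi\otimes 1$ and $\tilde\psi_2=Z(1\otimes\psi)Z^*$ with the particular unitary~$Z$ of~\eqref{eq:Z_appendix}, characterised by~\eqref{eq:commutation_between_corep_of_two_quantum_groups}. So already the \emph{existence} of $\Psi$ requires exhibiting $(f,g)$, in a concrete realisation, as a pair of exactly this form; you cannot separate this from the injectivity step, and the ``remains to verify the covariance relation'' clause has no theorem behind it to feed into.

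That identification is precisely where your argument stops: ``by construction the pair $(f,g)$ sits in exactly the relative position required'' is the entire content of the proposition, not a by-product. Knowing that $f$ and $g$ are individually faithful and covariant (for $1\otimes U_{\kappa'}$, resp.\ $1\otimes\hat U_{\kappa'}$) does not determine their relative position, and an arbitrary pair of covariant representations on the same space need not generate the twisted tensor product at all. One must actually produce the unitary that simultaneously brings $f$ to the form $x\mapsto x\otimes 1$ and conjugates the untwisted embedding of $B'$ to $g$, and verify the commutation relations it has to satisfy. This is what the paper does: it realises $A\boxtimes B\boxtimes B'$ on $\Hils\otimes\Hils[L]\otimes\Hils[L]'$ and \emph{defines}
\[
\Psi(x)\defeq \Multunit_{13}\,\Corep{U}_{23}\,(\DuCorep{V}')^*_{34}\, x_{124}\, \DuCorep{V}'_{34}\,\Corep{U}^*_{23}\,\Multunit^*_{13},
\]
so injectivity is automatic, and then checks the three identities~\eqref{eq:isomorph_def} by explicit computations using the Yetter--Drinfeld compatibility of $(\corep{U},\corep{V})$, the characterisation~\eqref{eq:braid_op_U_hatV} of~$Z$, and the corepresentation property of~$\corep{V}'$; the computation for $\iota_{B'}$ alone needs several nontrivial commutation steps, not merely ``the defining relation of $A\boxtimes B'$''. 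Until you supply this conjugating unitary and these verifications, both the existence and the injectivity of $\Psi$ remain unproven; your closing remark that $\Psi$ is not surjective is correct but immaterial.
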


Before we prove this technical result, we state our main result and
give a simple example.  Another example is the construction of
quantum U(2) groups from braided quantum SU(2) groups
in~\cite{Kasprzak-Meyer-Roy-Woronowicz:Braided_SU2} (the conventions
in~\cite{Kasprzak-Meyer-Roy-Woronowicz:Braided_SU2} are, however,
slightly different).

\begin{theorem}
  \label{the:bialg_frm_brd}
  Let \(C\defeq A\boxtimes B\) and \(\Comult[C]\defeq \Psi\circ
  (\Id_A\boxtimes \Comult[B])\colon C\to C\otimes C\).  Then
  \(\Bialg{C}\) is a bisimplifiable \(\Cst\)\nb-bialgebra
  whenever~\(\Bialg{B}\) is a bisimplifiable braided
  \(\Cst\)\nb-bialgebra over~\(\G\).  If~\(\Comult[B]\) is
  injective, then so is~\(\Comult[C]\), and vice versa.
\end{theorem}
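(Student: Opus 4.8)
The plan is to verify in turn that \(\Comult[C]\) is a coassociative morphism (so that \(\Bialg{C}\) is a \(\Cst\)\nb-bialgebra), that it is bisimplifiable, and that it is injective exactly when \(\Comult[B]\) is. That \(\Comult[C]\colon C\to C\otimes C\) is a nondegenerate morphism is immediate: it is the composite of \(\Id_A\boxtimes\Comult[B]\colon A\boxtimes B\to A\boxtimes(B\boxtimes B)\), the associator \(A\boxtimes(B\boxtimes B)\cong A\boxtimes B\boxtimes B\) from Lemma~\ref{lem:A_coact_AB}, and the morphism~\(\Psi\) of Proposition~\ref{prop:isom_crossed_and_tensor_prod}. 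Since \(C=\iota_A(A)\cdot\iota_B(B)\) and \(\Comult[C]\) is multiplicative, coassociativity only needs to be checked after precomposing with \(\iota_A\) and with \(\iota_B\). On \(\iota_A\): from \((\Id_A\boxtimes\Comult[B])\circ\iota_A=\iota_A\) and the first line of~\eqref{eq:isomorph_def} one gets \(\Comult[C]\circ\iota_A=(\iota_A\otimes\iota_A)\circ\Comult[A]\), so both iterated coproducts applied to \(\iota_A(a)\) equal \((\iota_A\otimes\iota_A\otimes\iota_A)\) applied to the iterated coproduct of~\(a\), which is unambiguous by coassociativity of~\(\Comult[A]\).

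The substantial point is coassociativity on \(\iota_B(B)\). Here \((\Id_A\boxtimes\Comult[B])\circ\iota_B=\iota_{B\boxtimes B}\circ\Comult[B]\), and under the associator the maps \(\iota_{B\boxtimes B}\circ\iota_1\) and \(\iota_{B\boxtimes B}\circ\iota_2\) become the two \(B\)\nb-embeddings of the triple crossed product \(A\boxtimes B\boxtimes B'\); the last two lines of~\eqref{eq:isomorph_def} then give an explicit description of \(\Comult[C]\circ\iota_B\) in terms of \(\Comult[B]\) and the \(A\)\nb-coaction~\(\beta\) on~\(B\) — schematically: apply \(\Comult[B]\), carry \(\beta\) through its first leg, and absorb the resulting \(A\)\nb-component into the second tensor leg of \(C\otimes C\) alongside \(\iota_{B'}\) of the second leg. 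Iterating and comparing the two bracketings, one must match the two ways of building \(\Comult[B]^{(2)}\) together with the positions into which the \(\beta\)\nb- and \(\Comult[A]\)\nb-images get distributed. This is carried out using coassociativity of \(\Comult[B]\), coassociativity of \(\Comult[A]\), the \(\G\)- and \(\DuG\)-equivariance of \(\Comult[B]\) (which intertwine \(\Comult[B]\) with the \(A\)- and \(\hat{A}\)\nb-coactions \(\beta,\hat\beta\) of~\(B\), the latter being what twists \(B\boxtimes_{\multunit}B\)), the Yetter--Drinfeld relation~\eqref{eq:G-Yetter_Drinfeld} between \(\beta\) and~\(\hat\beta\), and the defining relations~\eqref{eq:isomorph_def} of~\(\Psi\). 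This bookkeeping — a \(\Cst\)\nb-level incarnation of the computation behind Majid's bosonisation and Radford's theorem — is the main obstacle; it can be packaged as a coherence identity for~\(\Psi\) relating it applied after \(\Id_A\boxtimes\Comult[B]\boxtimes\Id_B\) and after \(\Id_A\boxtimes\Id_B\boxtimes\Comult[B]\), but establishing that identity is the same calculation. Everything outside this step is formal.

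For bisimplifiability, expand \(\Comult[C](C)=\Comult[C](\iota_A(A))\cdot\Comult[C](\iota_B(B))\) with the formulas just obtained and reduce the two cancellation conditions \(\Comult[C](C)\cdot(1_C\otimes C)=C\otimes C=\Comult[C](C)\cdot(C\otimes 1_C)\) to the braided Podle\'s conditions \(\Comult[B](B)\cdot\iota_1(B)=B\boxtimes B=\Comult[B](B)\cdot\iota_2(B)\) for~\(B\), the Podle\'s condition for~\(\Comult[A]\), and nondegeneracy of \(\iota_A\), \(\iota_B\) and~\(\beta\); the identities \(C=\iota_A(A)\cdot\iota_B(B)=\iota_B(B)\cdot\iota_A(A)\) are used to shift factors into position, and running these reductions backwards also yields the converse implication. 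Finally, \(\Comult[C]=\Psi\circ(\Id_A\boxtimes\Comult[B])\) with \(\Psi\) injective by Proposition~\ref{prop:isom_crossed_and_tensor_prod}, while \(\Id_A\boxtimes\Comult[B]\) is injective precisely when \(\Comult[B]\) is, since the twisted tensor product preserves injectivity in each variable (realise all algebras via compatible faithful covariant representations as in Theorem~\ref{the:crossed_prod}); hence \(\Comult[C]\) is injective if and only if \(\Comult[B]\) is. For the direction ``\(\Comult[C]\) injective \(\Rightarrow\) \(\Comult[B]\) injective'' one may also argue straight from \(\Comult[C]\circ\iota_B=\Psi\circ\iota_{B\boxtimes B}\circ\Comult[B]\), all three of \(\iota_B\), \(\iota_{B\boxtimes B}\), \(\Psi\) being injective.
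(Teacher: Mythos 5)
Your overall architecture matches the paper's proof: check coassociativity separately on \(\iota_A(A)\) and \(\iota_B(B)\), derive the Podle\'s conditions for \(\Comult[C]\) from those for \(\Comult[A]\) and the braided Podle\'s conditions for \(\Comult[B]\), and settle injectivity via the injectivity of~\(\Psi\). The \(\iota_A\)-part, the injectivity discussion, and the rough shape of the Podle\'s computation are fine. But there is a genuine gap: the coassociativity of \(\Comult[C]\) on \(\iota_B(B)\), which is the actual content of the theorem, is announced rather than proved. You correctly identify it as ``the main obstacle'' and say it ``can be packaged as a coherence identity for \(\Psi\) \dots\ but establishing that identity is the same calculation'' --- and then you do not carry out that calculation. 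Listing the ingredients (equivariance of \(\Comult[B]\), the Yetter--Drinfeld relation, the relations~\eqref{eq:isomorph_def}) is not a proof that they combine to give the identity; this is exactly the bookkeeping in which a bosonisation-type argument can go wrong, and it is where all the work lies.

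The idea that closes this gap in the paper is to exploit that Proposition~\ref{prop:isom_crossed_and_tensor_prod} is stated for an arbitrary pair of Yetter--Drinfeld algebras: applying it to the pairs \((B,\,B\boxtimes B)\) and \((B\boxtimes B,\,B)\) yields morphisms \(\Psi'\colon A\boxtimes B\boxtimes(B\boxtimes B)\to (A\boxtimes B)\otimes(A\boxtimes B\boxtimes B)\) and \(\Psi''\colon A\boxtimes(B\boxtimes B)\boxtimes B\to (A\boxtimes B\boxtimes B)\otimes(A\boxtimes B)\) whose values on each leg are known explicitly from~\eqref{eq:isomorph_def} (for instance \(\Psi''\) acts on the legs of \(B\boxtimes B\) by \(b\mapsto \beta(b)\) placed appropriately). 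One then verifies the two intertwining identities \((\Id_C\otimes(\Id_A\boxtimes\Comult[B]))\circ\Psi|_{B\boxtimes B}=\Psi'\circ(\Id_B\boxtimes\Comult[B])\) and \(((\Id_A\boxtimes\Comult[B])\otimes\Id_C)\circ\Psi|_{B\boxtimes B}=\Psi''\circ(\Comult[B]\boxtimes\Id_B)\), the second of which is precisely where the \(\G\)\nb-equivariance of \(\Comult[B]\) with respect to \(\beta\) and \(\beta\bowtie\beta\) enters; after that, the required coherence \((\Id_C\otimes\Psi)\circ\Psi'=(\Psi\otimes\Id_C)\circ\Psi''\) on \(B\boxtimes B\boxtimes B\) reduces to the coaction identity \((\beta\otimes\Id_A)\beta=(\Id_B\otimes\Comult[A])\beta\), and coassociativity of \(\Comult[B]\) finishes the argument. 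Without this (or an equivalent) explicit reduction, your proposal does not establish the theorem. Two smaller points: in the Podle\'s computation the step that lets you insert a factor \(\iota_2(B)\) so as to invoke the braided Podle\'s condition is the observation that \(\Psi\circ\iota_{B'}(b')=1\otimes\iota_{B'}(b')\) multiplies \(\iota_A(A)\otimes C\); your ``shift factors into position'' glosses over this. And the claimed converse for bisimplifiability is neither asserted by the theorem nor justified by ``running the reductions backwards.''
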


\begin{corollary}
  \label{cor:CQG_from_BCQG}
  \(\Bialg{C}\) is a compact quantum group if~\(\G\) is a compact
  quantum group and \(\Bialg{B}\) is a braided compact quantum group
  over~\(\G\).
\end{corollary}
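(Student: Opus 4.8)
The plan is to read the corollary off Theorem~\ref{the:bialg_frm_brd}, the only extra ingredient being that the underlying $\Cst$\nb-algebra of $\Bialg{C}$ is unital. Recall that a compact quantum group is the same as a unital $\Cst$\nb-bialgebra satisfying the bisimplifiability conditions; any such bialgebra carries a manageable multiplicative unitary, so it is automatically a $\Cst$\nb-quantum group in the sense used throughout the paper. Hence it suffices to check that $C=A\boxtimes B$ is unital and that $\Bialg{C}$ is a bisimplifiable $\Cst$\nb-bialgebra.

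First I would establish unitality. Since~\(\G\) is a compact quantum group, \(A\)~is unital; since \(\Bialg{B}\) is a braided \emph{compact} quantum group, \(B\)~is unital by Definition~\ref{def:Braid_bialg}. The canonical embeddings \(\iota_A\colon A\to\Mult(C)\) and \(\iota_B\colon B\to\Mult(C)\) are morphisms, that is, nondegenerate \Star{}homomorphisms, and a nondegenerate \Star{}homomorphism out of a unital $\Cst$\nb-algebra is unital; therefore \(\iota_A(1_A)=\iota_B(1_B)=1_C\), and in particular \(C=\iota_A(A)\cdot\iota_B(B)\) is unital. Consequently the comultiplication \(\Comult[C]\colon C\to C\otimes C\) produced by Theorem~\ref{the:bialg_frm_brd} is a morphism between unital $\Cst$\nb-algebras, hence itself unital.

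For bisimplifiability I would simply invoke Theorem~\ref{the:bialg_frm_brd}: a braided compact quantum group over~\(\G\) is in particular a bisimplifiable braided $\Cst$\nb-bialgebra over~\(\G\), so that theorem applies and yields that \(\Bialg{C}\) is a bisimplifiable $\Cst$\nb-bialgebra. Combined with the unitality above, \(\Bialg{C}\) is a unital bisimplifiable $\Cst$\nb-bialgebra, hence a compact quantum group; if desired, injectivity of~\(\Comult[C]\) follows from that of~\(\Comult[B]\) by the last clause of Theorem~\ref{the:bialg_frm_brd}. There is no genuine obstacle: the corollary is a specialisation of Theorem~\ref{the:bialg_frm_brd} to the compact case, and the only input beyond that theorem is the classical fact that a unital bisimplifiable $\Cst$\nb-bialgebra is already a compact $\Cst$\nb-quantum group, together with the elementary observation that a twisted tensor product of unital $\Cst$\nb-algebras is unital because its canonical embeddings are nondegenerate.
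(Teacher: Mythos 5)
Your proposal is correct and follows the paper's own argument: the paper's proof likewise reduces to Theorem~\ref{the:bialg_frm_brd}, noting that \(C\) is unital exactly when \(A\) and~\(B\) are, and that in the unital case the Podle\'s (bisimplifiability) conditions characterise compact quantum groups. Your extra details (unitality of \(C\) via nondegeneracy of \(\iota_A,\iota_B\)) merely spell out what the paper leaves implicit.
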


\begin{proof}
  The \(\Cst\)\nb-algebra~\(C\) is unital if and only if \(A\)
  and~\(B\) are both unital.  In the unital (compact) case, the
  Podle\'s conditions suffice to characterise compact quantum
  groups.
\end{proof}

\begin{example}
  \label{exa:partial_dual}
  The following example is inspired by the construction of partial
  duals in~\cite{Barvels-Lentner-Schweigert:Partial_dual} in the
  setting of Hopf algebras.  Let~\(K\) be a compact group,
  let~\(\Gamma\) be a discrete group, and let \(\varphi\colon
  \Gamma\to \Aut(K)\) be a group homomorphism.  Let~\(\Gamma\) act on
  \(B\defeq \Cont(K)\) by \(\varphi^*_g f(k) \defeq
  f(\varphi^{-1}_g(k))\) for all \(k\in K\), \(g\in\Gamma\),
  \(f\in\Cont(K)\).  Equip~\(\Cont(K)\) with the trivial coaction
  of~\(\Gamma\).  Since the \(\Gamma\)\nb-coaction on~\(\Cont(K)\)
  is trivial,
  \[
  \Cont_0(\Gamma) \boxtimes \Cont(K)
  \cong \Cont_0(\Gamma) \otimes \Cont(K)
  \cong \Cont_0(\Gamma \times K).
  \]
  The comultiplication~\(\Comult[C]\) is the one that is induced by the
  multiplication in the semidirect product group \(\Gamma\ltimes
  K\).

  We may also view \(\Cont(K)\) as a Yetter--Drinfeld algebra over
  \(\Cred(\Gamma)\) instead of~\(\Cont_0(\Gamma)\).  This gives a
  compact quantum group \(\Cred(\Gamma)\boxtimes \Cont(K)\) by
  Corollary~\ref{cor:CQG_from_BCQG}.  Its underlying
  \(\Cst\)\nb-algebra is canonically isomorphic to the reduced crossed
  product \(\Gamma\ltimes\Cont(K)\) (see
  \cite{Meyer-Roy-Woronowicz:Twisted_tensor}*{Section 6.3}).  Thus the
  unital \(\Cst\)\nb-algebra \(\Gamma\ltimes\Cont(K)\) becomes a
  compact quantum group by Corollary~\ref{cor:CQG_from_BCQG}.  This is
  the partial dual of the group \(\Gamma\ltimes K\), where we dualise
  \(\Cont_0(\Gamma)\) to~\(\Cred(\Gamma)\) and leave~\(\Cont(K)\)
  unchanged.  This example is also a special case of a bicrossed
  product (see~\cite{Baaj-Skandalis:Unitaires}*{Proposition 8.22}).
\end{example}

In the remainder of this section, we will prove
Proposition~\ref{prop:isom_crossed_and_tensor_prod} and
Theorem~\ref{the:bialg_frm_brd}.

First we name the coactions on our Yetter--Drinfeld
algebras: call them \(\beta\colon B\to B\otimes A\),
\(\hat{\beta}\colon B\to B\otimes \hat{A}\), \(\beta'\colon B'\to
B'\otimes A\), \(\hat{\beta}'\colon B'\to B'\otimes \hat{A}\).  We
choose faithful covariant, \(\Codouble{\G}\)-compatible representations
\((\varphi,\corep{U},\corep{V})\) and
\((\varphi',\corep{U}',\corep{V}')\) of \((B,\beta,\hat{\beta})\)
and~\((B',\beta',\hat{\beta}')\) on some Hilbert spaces \(\Hils[L]\)
and~\(\Hils[L']\).  We choose a manageable multiplicative unitary
\(\Multunit
\in \U(\Hils\otimes\Hils)\) generating~\(\G\); it induces
representations \(\pi\colon A\to\Bound(\Hils)\) and \(\hat\pi\colon
\hat{A}\to\Bound(\Hils)\) with
\(\Multunit=(\hat{\pi}\otimes\pi)\multunit\).  Then \(\pi\) and the
corepresentation \((\hat{\pi}\otimes\Id_A)W \in
\U(\Comp(\Hils)\otimes A)\) form a covariant representation of
\((A,\Comult[A])\).  We use these covariant representations of
\(A\), \(B\) and~\(B'\) to realise \(A\boxtimes B\boxtimes B'\) as a
\(\Cst\)\nb-subalgebra of \(\Bound(\Hils\otimes \Hils[L]\otimes
\Hils[L]')\); this gives
\[
A\boxtimes B \boxtimes B'
= \iota_A(A)\cdot \iota_B(B)\cdot \iota_{B'}(B')
\]
for three representations \(\iota_A\), \(\iota_B\), \(\iota_{B'}\) of
\(A\), \(B\) and~\(B'\) on \(\Hils\otimes \Hils[L]\otimes \Hils[L]'\).
We describe these representations as in the proof of
Theorem~\ref{the:Braided_coaction_Cat}.

The representation~\(\iota_A\) is most easy:
\[
\iota_A(a) = \pi(a)\otimes 1_{\Hils[L]\otimes\Hils[L]'}.
\]
To describe~\(\iota_B\), we must represent the universal \(R\)-matrix,
which is essentially~\(\multunit{}^\univ\), on the Hilbert
space~\(\Hils\otimes\Hils[L]\).  The bijection between
corepresentations of \(A\) and representations of~\(\hat{A}^\univ\)
maps the left corepresentation \(\multunit_{1\pi}\in
\U(\hat{A}\otimes\Comp(\Hils))\) to the representation \(\hat\pi\colon
\hat{A}^\univ \to\hat{A}\to\Bound(\Hils)\).  The bijection between
corepresentations of~\(\hat{A}\) and representations of~\(A^\univ\)
maps the right corepresentation \(\corep{V}\in
\U(\Comp(\Hils[L])\otimes \hat{A})\) to the unique representation
\(\rho\colon A^\univ \to \Bound(\Hils[L])\) with \(\multunit_{1\rho} =
\Ducorep{V}\defeq \flip(\corep{V})^*\in
\U(\hat{A}\otimes\Comp(\Hils[L]))\).  Hence the resulting
representation \(\hat\pi\otimes\rho\colon \hat{A}^\univ\otimes
A^\univ\to \Bound(\Hils\otimes\Hils[L])\) maps~\(\multunit{}^\univ\)
to \(\DuCorep{V} \defeq \Ducorep{V}_{\hat\pi 2}\).  Thus the braiding
unitary \(\Braiding{\Hils[L]}{\Hils}\) is \(\DuCorep{V}^*\Sigma\) and
\[
\iota_B(b) = \DuCorep{V}^*_{12}
(1_{\Hils}\otimes\varphi(b)\otimes 1_{\Hils[L]}) \DuCorep{V}_{12}.
\]

The representations \(\iota_A\) and~\(\iota_B\) without the trivial
third leg in~\(\Hils[L]'\) also realise \(A\boxtimes B\) in
\(\Bound(\Hils\otimes\Hils[L])\).  Similarly, we realise \(A\boxtimes
B'\) in \(\Bound(\Hils\otimes\Hils[L]')\) using \(\DuCorep{V}'\defeq
(\hat{\pi}\otimes\Id_{\Hils[L]'})\Ducorep{V}{}' \in
\U(\Hils\otimes\Hils[L]')\) instead of~\(\DuCorep{V}\).

The braiding for \(\Hils[L]\) and~\(\Hils[L]'\) involves a
unitary~\(Z\) given by~\eqref{eq:Z_appendix}.  This unitary may also
be characterised uniquely by the condition
\begin{equation}
  \label{eq:braid_op_U_hatV}
  \Corep{U}_{12}(\DuCorep{V}')^*_{23}Z_{13}
  = (\DuCorep{V}')^*_{23}\Corep{U}_{12}
  \qquad\text{in }\U(\Hils[L]\otimes\Hils\otimes\Hils[L]'),
\end{equation}
where \(\Corep{U}\defeq(\Id_{\Hils[L]}\otimes\pi)\corep{U}\in
\U(\Hils[L]\otimes\Hils)\);
compare~\eqref{eq:commutation_between_corep_of_two_quantum_groups}.
As in the proof of Theorem~\ref{the:Braided_coaction_Cat}, we see
that the representation~\(\iota_{B'}\) is
\[
\iota_{B'}(b') = Z_{23}(\DuCorep{V}')^*_{13}
(1_{\Hils\otimes\Hils[L]}\otimes\varphi'(b'))
(\DuCorep{V}')_{13}Z^*_{23}.
\]

\begin{proof}[Proof of
  Proposition~\textup{\ref{prop:isom_crossed_and_tensor_prod}}]
  Define
  \[
  \Psi(x) \defeq \Multunit_{13} \Corep{U}_{23} (\DuCorep{V}')^*_{34}
  x_{124} \DuCorep{V}'_{34} \Corep{U}^*_{23}
  \Multunit^*_{13}
  \qquad\text{for }x\in \Bound(\Hils\otimes\Hils[L]\otimes\Hils[L]').
  \]
  This is an injective \Star{}homomorphism from
  \(\Bound(\Hils\otimes\Hils[L]\otimes\Hils[L]')\) to
  \(\Bound(\Hils\otimes\Hils[L]\otimes\Hils\otimes\Hils[L]')\).  We
  compute \(\Psi\circ\iota_A\), \(\Psi\circ\iota_B\),
  \(\Psi\circ\iota_{B'}\); this will show that~\(\Psi\) maps
  \(A\boxtimes B\boxtimes B'\) into
  \[
  (A\boxtimes B) \otimes (A\boxtimes B') \subseteq
  \Bound(\Hils\otimes\Hils[L]\otimes\Hils\otimes\Hils[L]')
  \]
  and has the expected values on \(\iota_A(A)\), \(\iota_B(B)\)
  and~\(\iota_{B'}(B')\).

  Since \(\iota_A(a)=\pi(a)_1\), we get \(\Psi\circ \iota_A(a) =
  \Multunit_{13}\pi(a)_1\Multunit_{13}^* = (\iota_A\otimes
  \iota_A)\circ \Comult[A](a)\).  Next, \(\Psi\circ\iota_B(b) =
  \Multunit_{13} \Corep{U}_{23} \DuCorep{V}^*_{12} \varphi(b)_2
  \DuCorep{V}_{12} \Corep{U}^*_{23} \Multunit^*_{13}\).  The
  Yetter--Drinfeld compatibility condition for \(\corep{U}\)
  and~\(\corep{V}\) is equivalent to
  \(\Multunit_{13}\Corep{U}_{23}\DuCorep{V}^*_{12} =
  \DuCorep{V}^*_{12}\Corep{U}_{23}\Multunit_{13}\) in
  \(\U(\Hils\otimes\Hils[L]\otimes\Hils)\).  Using this and the
  covariance condition for \((\varphi,\corep{U})\) with respect
  to~\(\beta\), we compute
  \[
  \Psi\circ \iota_B(b) =
  \DuCorep{V}^*_{12} \Corep{U}_{23} \varphi(b)_2 \Corep{U}^*_{23}
  \DuCorep{V}_{12}
  = (\iota_B\otimes \iota_A)\circ\beta(b).
  \]
  Finally, we compute
  \begin{align*}
    \Psi\circ \iota_{B'}(b')
    &= \Multunit_{13} \Corep{U}_{23} (\DuCorep{V}')^*_{34} Z_{24}
    (\DuCorep{V}')^*_{14} \varphi'(b')_4
    \DuCorep{V}'_{14} Z_{24}^* \DuCorep{V}'_{34} \Corep{U}^*_{23}
    \Multunit^*_{13}\\
    &= \Multunit_{13} (\DuCorep{V}')^*_{34} \Corep{U}_{23}
    (\DuCorep{V}')^*_{14} \varphi'(b')_4 \DuCorep{V}'_{14}
    \Corep{U}^*_{23} \DuCorep{V}'_{34} \Multunit^*_{13}\\
    &= \Multunit_{13} (\DuCorep{V}')^*_{34} (\DuCorep{V}')^*_{14}
    \varphi'(b')_4
    \DuCorep{V}'_{14} \DuCorep{V}'_{34} \Multunit^*_{13}\\
    &= (\DuCorep{V}')^*_{34} \Multunit_{13} \varphi'(b')_4
    \Multunit^*_{13} \DuCorep{V}'_{34}\\
    &= (\DuCorep{V}')^*_{34} \varphi'(b')_4 \DuCorep{V}'_{34}
    = 1_{A\boxtimes B}\otimes \iota_{B'}(b').
  \end{align*}
  The first equality is trivial; the second equality uses
  \eqref{eq:braid_op_U_hatV}; the third equality commutes
  \(\Corep{U}_{23}\) with \(\DuCorep{V}_{14}\) and~\(\varphi'(b)_4\);
  the
  fourth equality uses that~\(\corep{V}'\) is a corepresentation
  of~\(\hat{A}\); the fifth equality commutes \(\Multunit_{13}\)
  and~\(\varphi'(b')_4\); and the last equality is the definition of
  the embedding of \(A\boxtimes B'\) in
  \(\Bound(\Hils\otimes\Hils[L]')\).
\end{proof}

\begin{proof}[Proof of Theorem~\textup{\ref{the:bialg_frm_brd}}]
  Let \(C\defeq A\boxtimes B\) and \(\Comult[C]\defeq
  \Psi\circ(\Id_A\boxtimes\Comult[B])\colon C\to C\otimes C\), where
  we use~\(\Psi\) from
  Proposition~\ref{prop:isom_crossed_and_tensor_prod} in the special
  case \(B=B'\).  We first check that this comultiplication is
  coassociative.  It suffices to check
  \((\Comult[C]\otimes\Id_C)\Comult[C]\circ \iota_A =
  (\Id_C\otimes\Comult[C])\Comult[C]\circ \iota_A\) and
  \((\Comult[C]\otimes\Id_C)\Comult[C]\circ \iota_B =
  (\Id_C\otimes\Comult[C])\Comult[C]\circ \iota_B\).  The first
  statement holds because on elements of the form \(\iota_A(a)\) with
  \(a\in A\), we get \((\Id_A\otimes\Comult[A])\Comult[A](a)\) and
  \((\Comult[A]\otimes\Id_A)\Comult[A](a)\), respectively, embedded
  into \(C\otimes C\otimes C\) via \(\iota_A\otimes \iota_A\otimes
  \iota_A\).

  To check the formula on~\(B\), we also need the two maps
  \begin{align*}
    \Psi'&\colon A\boxtimes B\boxtimes (B\boxtimes B)
    \to (A\boxtimes B)\otimes (A\boxtimes B\boxtimes B),\\
    \Psi''&\colon A\boxtimes (B\boxtimes B)\boxtimes B
    \to (A\boxtimes B\boxtimes B)\otimes (A\boxtimes B)
  \end{align*}
  that we get from Proposition~\ref{prop:isom_crossed_and_tensor_prod}
  for \(B,B\boxtimes B\) and \(B\boxtimes B,B\), respectively.  These
  satisfy, among others,
  \begin{alignat*}{3}
    \Psi'(b_2) &= \beta(b)_{23},&\quad
    \Psi'(b_3) &= b_4,&\quad
    \Psi'(b_4) &= b_5,\\
    \Psi''(b_2) &= \beta(b)_{24},&\quad
    \Psi''(b_3) &= \beta(b)_{34},&\quad
    \Psi''(b_4) &= b_5.
  \end{alignat*}
  Here we use leg numbering notation to distinguish the different
  copies of~\(B\) more clearly.  For instance, \(\beta(b)_{23}\) means
  \((\iota_B\otimes\iota_A)\beta(b)\).  With these maps, we may write
  \begin{align*}
    (\Id_C\otimes(\Id_A\boxtimes\Comult[B]))\circ\Psi|_{B\boxtimes B} &=
    \Psi' \circ (\Id_B\boxtimes\Comult[B]),\\
    ((\Id_A\boxtimes\Comult[B])\otimes\Id_C)\circ\Psi|_{B\boxtimes B} &=
    \Psi'' \circ (\Comult[B]\boxtimes\Id_B).
  \end{align*}
  The second formula uses that~\(\Comult[B]\) is \(\G\)\nb-equivariant
  with respect to the actions \(\beta\) and~\(\beta\bowtie\beta\) and
  that \(\Psi''\) on \(\iota_2(B)\iota_3(B)\)
  is~\(\beta\bowtie\beta\).  Since~\(\beta\) is a coaction, we get
  \[
  (\Id_C\otimes\Psi)\circ\Psi'|_{B\boxtimes B\boxtimes B} =
  (\Psi\otimes\Id_C)\circ\Psi''|_{B\boxtimes B\boxtimes B}.
  \]
  This and the coassociativity of~\(\Comult[B]\) imply
  that~\(\Comult[C]\) is coassociative also on~\(B\).

  Now we turn to the Podle\'s conditions.  We have \(A\boxtimes B =
  \iota_A(A)\iota_B(B) = \iota_B(B)\iota_A(A)\), \(\beta(B)\cdot
  (1\otimes A) = B\otimes A\) because~\(\beta\) satisfies the Podle\'s
  condition, and \(\Comult[A](A)(1\otimes A) = \Comult[A](A)(A\otimes
  1) = A\otimes A\) and \(\Comult[B](B) \cdot B_2 = \Comult[B](B) B_1
  = B\boxtimes B\) because \(A\) and~\(B\) are bisimplifiable.  Thus
  \begin{align*}
    \Comult[C](C)\cdot (1\otimes C)
    &= \Comult[C](\iota_B(B)) \cdot
    \Comult[C](\iota_A(A)) \cdot (1\otimes \iota_A(A))\cdot
    (1\otimes \iota_B(B))
    \\&= \Comult[C](\iota_B(B)) \cdot
    (\iota_A\otimes\iota_A)(\Comult[A](A) \cdot (1\otimes A))
    \cdot (1\otimes \iota_B(B))
    \\&= \Comult[C](\iota_B(B)) \cdot (\iota_A(A)\otimes C)
    = \Psi(\Comult[B](B)_{23}) \cdot (\iota_A(A)\otimes C).
  \end{align*}
  Since \(\Psi\circ\iota_{B'}(b')=1\otimes \iota_{B'}(b')\) is a
  multiplier of \(\iota_A(A)\otimes C\), we may rewrite this as
  \begin{multline*}
    \Psi(\Comult[B](B)_{23}\cdot B_3)
    \cdot (\iota_A(A)\otimes C)
    \\= \Psi((B\boxtimes B)_{23}) \cdot (\iota_A(A)\otimes C)
    = \Psi(B_2) \cdot (\iota_A(A)\otimes C),
  \end{multline*}
  using that~\(\Bialg{B}\) is bisimplifiable.  Finally, the formula
  for~\(\Psi\) on the second leg and the Podle\'s condition
  for~\(\beta\) show that this is \(C\otimes C\), as desired.

  The other Podle\'s condition is proved similarly.  Since~\(\Psi\)
  is injective, \(\Comult[C]\) is injective if and only if
  \(\Id_A\boxtimes \Comult[B]\) is injective.  This is equivalent
  to~\(\Comult[B]\) being injective by
  \cite{Meyer-Roy-Woronowicz:Homomorphisms}*{Proposition 5.6}.
\end{proof}

\appendix

\section{Preliminaries}
\label{sec:preliminaries}

\subsection{Multiplicative unitaries and quantum groups}
\label{sec:multunit_quantum_groups}

\begin{definition}[\cite{Baaj-Skandalis:Unitaires}]
  \label{def:multunit}
  Let~\(\Hils\) be a Hilbert space.  A unitary
  \(\Multunit\in\U(\Hils\otimes\Hils)\) is \emph{multiplicative} if it
  satisfies the \emph{pentagon equation}
  \begin{equation}
    \label{eq:pentagon}
    \Multunit_{23}\Multunit_{12}
    = \Multunit_{12}\Multunit_{13}\Multunit_{23}
    \qquad
    \text{in }\U(\Hils\otimes\Hils\otimes\Hils).
  \end{equation}
\end{definition}

Technical assumptions such as manageability
(\cite{Woronowicz:Multiplicative_Unitaries_to_Quantum_grp}) are
needed to construct \(\Cst\)\nb-algebras out of a
multiplicative unitary.

\begin{theorem}[\cites{Soltan-Woronowicz:Remark_manageable,
    Soltan-Woronowicz:Multiplicative_unitaries,
    Woronowicz:Multiplicative_Unitaries_to_Quantum_grp}]
  \label{the:Cst_quantum_grp_and_mult_unit}
  Let~\(\Hils\) be a separable Hilbert space and
  \(\Multunit\in\U(\Hils\otimes\Hils)\) a manageable
  multiplicative unitary.  Let
  \begin{alignat}{2}
    \label{eq:first_leg_slice}
    A &\defeq \{(\omega\otimes\Id_{\Hils})\Multunit:
    \omega\in\Bound(\Hils)_*\}^\CLS,\\
    \label{eq:second_leg_slice}
    \hat{A} &\defeq \{(\Id_{\Hils}\otimes\omega)\Multunit:
    \omega\in\Bound(\Hils)_*\}^\CLS.
  \end{alignat}
  \begin{enumerate}
  \item \(A\) and \(\hat{A}\) are separable, nondegenerate
    \(\Cst\)\nb-subalgebras of~\(\Bound(\Hils)\).
  \item \(\Multunit\in\U(\hat{A}\otimes
    A)\subseteq\U(\Hils\otimes\Hils)\).  We write~\(\multunit[A]\)
    for~\(\Multunit\) viewed as a unitary multiplier of
    \(\hat{A}\otimes A\) and call it \emph{reduced bicharacter}.
  \item There is a unique morphism \(\Comult[A]\colon A\to A\otimes
    A\) such that
    \begin{equation}
      \label{eq:W_char_in_second_leg}
      (\Id_{\hat{A}}\otimes \Comult[A])\multunit[A]
      = \multunit[A]_{12}\multunit[A]_{13}
      \qquad \text{in }\U(\hat{A}\otimes A\otimes A);
    \end{equation}
    it is \emph{coassociative} and \emph{bisimplifiable}:
    \begin{gather}
      \label{eq:coassociative}
      (\Comult[A]\otimes\Id_A)\circ\Comult[A]
      = (\Id_A\otimes\Comult[A])\circ\Comult[A],
      \\\label{eq:Podles}
      \Comult[A](A)\cdot(1_A\otimes A)
      = A\otimes A
      = (A\otimes 1_A)\cdot\Comult[A](A).
    \end{gather}
  \end{enumerate}
\end{theorem}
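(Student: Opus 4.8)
This theorem is a synthesis of results of Baaj--Skandalis and of So\l{}tan and Woronowicz, so the plan is to assemble the standard arguments rather than to invent new ones. I would begin with part~(1). Norm-separability of~\(A\) and~\(\hat{A}\) is immediate: \(\Bound(\Hils)_*\) is norm-separable for separable~\(\Hils\), and the slicing maps \(\omega\mapsto(\omega\otimes\Id_{\Hils})\Multunit\) and \(\omega\mapsto(\Id_{\Hils}\otimes\omega)\Multunit\) are norm-continuous. That the linear span of the slices \((\omega\otimes\Id_{\Hils})\Multunit\) is closed under multiplication is where the pentagon equation~\eqref{eq:pentagon} enters: a product of two such slices equals \((\omega\otimes\omega'\otimes\Id_{\Hils})(\Multunit_{13}\Multunit_{23})\), and the identity \(\Multunit_{13}\Multunit_{23}=\Multunit_{12}^*\Multunit_{23}\Multunit_{12}\) (a rearrangement of~\eqref{eq:pentagon}) rewrites this as a norm-limit of slices. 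Closure under the adjoint and nondegeneracy are the parts that genuinely require manageability; here I would invoke the operators \(Q\) and~\(\widetilde{\Multunit}\) attached to a manageable~\(\Multunit\), which yield a unitary antipode and, in particular, show that \(\{(\omega\otimes\Id_{\Hils})\Multunit\}^{\CLS}\) is closed under the adjoint and acts nondegenerately. The symmetric analysis in the second leg handles~\(\hat{A}\).

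For part~(2), a short computation with~\eqref{eq:pentagon} shows that left and right multiplication by~\(\Multunit\) preserve \(\hat{A}\otimes A\subseteq\Bound(\Hils\otimes\Hils)\); hence~\(\Multunit\) is a unitary multiplier of \(\hat{A}\otimes A\), and this is~\(\multunit[A]\).

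For part~(3), I would \emph{define}~\(\Comult[A]\) by requiring~\eqref{eq:W_char_in_second_leg}. One then checks that this prescription gives a well-defined nondegenerate \Star{}homomorphism \(A\to\Mult(A\otimes A)\) --- slicing the first leg of \(\Multunit_{12}\Multunit_{13}\) lands in \(A\otimes A\) by part~(2), and multiplicativity again follows from the pentagon equation --- and that it is unique, which is clear since the slices \((\omega\otimes\Id_{\Hils})\Multunit\) are dense in~\(A\). Coassociativity~\eqref{eq:coassociative} is then a one-line diagram chase: applying \(\Id_{\hat{A}}\otimes\Comult[A]\otimes\Id_A\) and \(\Id_{\hat{A}}\otimes\Id_A\otimes\Comult[A]\) to~\eqref{eq:W_char_in_second_leg} both yield \(\Multunit_{12}\Multunit_{13}\Multunit_{14}\), so the two composites \((\Comult[A]\otimes\Id_A)\Comult[A]\) and \((\Id_A\otimes\Comult[A])\Comult[A]\) agree after slicing the first leg.

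The main obstacle is the bisimplifiability (Podle\'s) condition~\eqref{eq:Podles}: \(\Comult[A](A)\cdot(1_A\otimes A)=A\otimes A=(A\otimes 1_A)\cdot\Comult[A](A)\). This is \emph{not} a formal consequence of the pentagon equation; it is precisely the point at which the full strength of manageability is needed, through Woronowicz's densely defined antipode and its polar decomposition into a unitary antipode and a scaling group. Following the present paper, I would cite \cites{Woronowicz:Multiplicative_Unitaries_to_Quantum_grp,Soltan-Woronowicz:Multiplicative_unitaries,Soltan-Woronowicz:Remark_manageable} for this final step rather than reproduce its proof.
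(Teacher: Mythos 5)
Your proposal is consistent with how the paper treats this statement: the paper gives no proof at all, but simply recalls the theorem from the cited works of Woronowicz and So\l{}tan--Woronowicz, and your outline follows the standard route of exactly those references (pentagon equation for the algebra structure of the slice spaces, manageability for the \Star{}closedness, nondegeneracy and the Podle\'s condition). Two small calibration points. First, for part~(3) the clean way to get existence and well-definedness of \(\Comult[A]\) is not to ``define it by~\eqref{eq:W_char_in_second_leg} and check'', but to set \(\Comult[A](a)\defeq\Multunit(a\otimes 1)\Multunit^*\); the pentagon identity \(\Multunit_{12}\Multunit_{13}=\Multunit_{23}\Multunit_{12}\Multunit_{23}^*\) then shows that this conjugation formula satisfies~\eqref{eq:W_char_in_second_leg}, and density of the slices gives uniqueness --- your version implicitly needs this formula anyway to see that the prescription is well defined on~\(A\) and multiplicative. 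Secondly, your claim that part~(2), \(\Multunit\in\U(\hat{A}\otimes A)\), follows from ``a short computation with the pentagon'' overstates what the pentagon alone gives: membership in \(\Mult(\hat{A}\otimes A)\) is one of the places where manageability (via the auxiliary unitary \(\widetilde{\Multunit}\) and the operator \(Q\)) is genuinely used in the cited proofs, just as for the \Star{}structure of \(A\) and \(\hat{A}\) and for~\eqref{eq:Podles}; for a general (non-manageable, non-regular) multiplicative unitary the statement can fail. Since you correctly defer the hard analytic steps to the same references the paper cites, these are attribution inaccuracies in the sketch rather than gaps in substance.
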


A \emph{\(\Cst\)\nb-quantum group} is a \(\Cst\)\nb-bialgebra
\(\Qgrp{G}{A}\) constructed from a manageable multiplicative unitary.
This class contains the locally compact quantum groups of Kustermans
and Vaes~\cite{Kustermans-Vaes:LCQG}, which are defined by the
existence of left and right Haar weights.

The \emph{dual} multiplicative unitary is \(\DuMultunit\defeq
\Flip\Multunit^*\Flip\in\U(\Hils\otimes\Hils)\), where
\(\Flip(x\otimes y)=y\otimes x\).  It is manageable
if~\(\Multunit\) is.  The \(\Cst\)\nb-quantum group
\(\DuQgrp{G}{A}\) generated by~\(\DuMultunit\) is the \emph{dual}
of~\(\G\).  Its comultiplication is characterised by
\begin{equation}
  \label{eq:dual_Comult_W}
  (\DuComult[A]\otimes \Id_A)\multunit[A]
  = \multunit[A]_{23}\multunit[A]_{13}
  \qquad\text{in }\U(\hat{A}\otimes \hat{A}\otimes A).
\end{equation}

\subsection{Corepresentations}

\begin{definition}
  \label{def:corepresentation}
  A (right) \emph{corepresentation} of~\(\G\) on a Hilbert
  space~\(\Hils\) is a unitary \(\corep{U}\in\U(\Comp(\Hils)\otimes A)\)
  with
  \begin{equation}
    \label{eq:corep_cond}
    (\Id_{\Comp(\Hils)}\otimes\Comult[A])\corep{U} =\corep{U}_{12}\corep{U}_{13}
    \qquad\text{in }\U(\Comp(\Hils)\otimes A\otimes A).
  \end{equation}
\end{definition}

Let \(\corep{U}^1\in\U(\Comp(\Hils_1)\otimes A)\) and
\(\corep{U}^2\in\U(\Comp(\Hils_2)\otimes A)\) be corepresentations
of~\(\G\).  An element \(t\in\Bound(\Hils_1,\Hils_2)\) is called
an \emph{intertwiner} if \((t\otimes 1_A)\corep{U}^1 =
\corep{U}^2(t\otimes 1_A)\).  The set of all intertwiners
between \(\corep{U}^1\) and~\(\corep{U}^2\) is denoted
\(\Hom(\corep{U}^1, \corep{U}^2)\).  This gives corepresentations a
structure of \(\textup{W}^*\)\nb-category (see
\cite{Soltan-Woronowicz:Multiplicative_unitaries}*{Sections 3.1--2}).

The \emph{tensor product} of two corepresentations
\(\corep{U}^{\Hils_1}\) and~\(\corep{U}^{\Hils_2}\) is defined by
\begin{equation}
  \label{eq:tens_corep}
  \corep{U}^{\Hils_1}\tenscorep \corep{U}^{\Hils_2}\defeq
  \corep{U}^{\Hils_1}_{13}\corep{U}^{\Hils_2}_{23}
  \qquad\text{in }\U(\Comp(\Hils_1\otimes\Hils_2)\otimes A).
\end{equation}
Routine computations show the following:
\(\corep{U}^{\Hils_1}\tenscorep \corep{U}^{\Hils_2}\) is a
corepresentation; \(\tenscorep\) is associative; and the trivial
\(1\)\nb-dimensional representation is a tensor unit.  Thus
corepresentations form a monoidal \(\textup{W}^*\)-category, which we
denote by~\(\Corepcat(\G)\); see
\cite{Soltan-Woronowicz:Multiplicative_unitaries}*{Section 3.3} for
more details.

\subsection{Coactions}

\begin{definition}
  \label{def:cont_coaction}
  A \emph{continuous \textup(right\textup) coaction} of~\(\G\) on a
  \(\Cst\)\nb-algebra~\(C\) is a morphism \(\gamma\colon C\to C\otimes
  A\) with the following properties:
  \begin{enumerate}
  \item \(\gamma\) is injective;
  \item \(\gamma\) is a comodule structure, that is,
    \((\Id_C\otimes\Comult[A])\gamma = (\gamma\otimes\Id_A)\gamma\);
  \item \(\gamma\) satisfies the \emph{Podleś condition}
    \(\gamma(C)\cdot(1_C\otimes A)=C\otimes A\).
  \end{enumerate}
  We call \((C,\gamma)\) a \emph{\(\G\)\nb-\(\Cst\)\nb-algebra}.  We
  often drop~\(\gamma\) from our notation.

  A morphism \(f\colon C\to D\) between two
  \(\G\)\nb-\(\Cst\)\nb-algebras \((C,\gamma)\) and \((D,\delta)\) is
  \emph{\(\G\)\nb-\hspace{0pt}equivariant} if \(\delta\circ f =
  (f\otimes\Id_A)\circ\gamma\).  Let \(\Mor^{\G}(C,D)\) be the set of
  \(\G\)\nb-equivariant morphisms from~\(C\) to~\(D\).  Let
  \(\Cstcat(\G)\) be the category with \(\G\)\nb-\(\Cst\)-algebras as
  objects and \(\G\)\nb-equivariant morphisms as arrows.
\end{definition}

\begin{definition}
  \label{def:covariant_corep}
  A \emph{covariant representation} of \((C,\gamma,A)\) on a Hilbert
  space~\(\Hils\) is a pair consisting of a corepresentation
  \(\corep{U}\in\U(\Comp(\Hils)\otimes A)\) and a representation
  \(\varphi\colon C\to\Bound(\Hils)\) that satisfy the covariance
  condition
  \begin{equation}
    \label{eq:covariant_corep}
    (\varphi\otimes\Id_A)\circ \gamma(c) =
    \corep{U}(\varphi(c)\otimes 1_A)\corep{U}^*
    \qquad\text{in }\U(\Comp(\Hils)\otimes A)
  \end{equation}
  for all \(c\in C\).  A covariant representation is called
  \emph{faithful} if~\(\varphi\) is faithful.
\end{definition}

Faithful covariant representations always exist by
\cite{Meyer-Roy-Woronowicz:Twisted_tensor}*{Example~4.5}.

\subsection{Universal quantum groups}
\label{sec:univ_qgr}

The universal quantum group
\[
\G^\univ\defeq (A^\univ,\Comult[A^\univ])
\]
associated to \(\Qgrp{G}{A}\) is introduced
in~\cite{Soltan-Woronowicz:Multiplicative_unitaries}.  By
construction, it comes with a \emph{reducing map} \(\Lambda\colon
A^\univ\to A\) and a universal bicharacter \(\maxcorep\in
\U(\hat{A}\otimes A^\univ)\).  This may also be characterised as the
unique bicharacter in~\(\U(\hat{A}\otimes A^\univ)\) that lifts
\(\multunit[A]\in \U(\hat{A}\otimes A)\) in the sense that
\((\Id_{\hat{A}}\otimes\Lambda)\maxcorep=\multunit[A]\).

Similarly, there are unique bicharacters
\(\dumaxcorep\in\U(\hat{A}^\univ\otimes A)\) and \(\multunit{}^\univ\in
\U(\hat{A}^\univ\otimes A^\univ)\) that lift \(\multunit[A]\in
\U(\hat{A}\otimes A)\); the latter is constructed
in~\cite{Kustermans:LCQG_universal} assuming a Haar measure and
in~\cite{Meyer-Roy-Woronowicz:Homomorphisms} in the more general
setting of manageable multiplicative unitaries.
The universality of \(\dumaxcorep\in\U(\hat{A}^\univ\otimes A)\)
says that for any corepresentation
\(\corep{U}\in\U(\Comp(\Hils)\otimes A)\) of~\(\G\) on a Hilbert
space~\(\Hils\), there is a unique representation
\(\rho\colon\hat{A}^\univ\to\Bound(\Hils)\) with
\begin{equation}
  \label{eq:univ_prop_dumaxcprep}
  (\rho\otimes\Id_A)\dumaxcorep = \corep{U}
  \qquad\text{in }\U(\Comp(\Hils)\otimes A).
\end{equation}

\subsection{Bicharacters as quantum group morphisms}
\label{sec:bicharacters_morphisms}

Let \(\Qgrp{G}{A}\) and \(\Qgrp{H}{B}\) be \(\Cst\)\nb-quantum groups.
Let \(\DuQgrp{G}{A}\) and \(\DuQgrp{H}{B}\) be their duals.

\begin{definition}[\cite{Meyer-Roy-Woronowicz:Homomorphisms}*{Definition
    16}]
  \label{def:bicharacter}
  A \emph{bicharacter from \(\G\) to~\(\DuG[H]\)} is a unitary
  \(\bichar\in\U(\hat{A}\otimes \hat{B})\) with
  \begin{alignat}{2}
    \label{eq:bichar_char_in_first_leg}
    (\DuComult[A]\otimes\Id_{\hat{B}})\bichar
    &=\bichar_{23}\bichar_{13}
    &\qquad &\text{in }
    \U(\hat{A}\otimes\hat{A}\otimes \hat{B}),\\
    \label{eq:bichar_char_in_second_leg}
    (\Id_{\hat{A}}\otimes\DuComult[B])\bichar
    &=\bichar_{12}\bichar_{13}
    &\qquad &\text{in }
    \U(\hat{A}\otimes \hat{B}\otimes \hat{B}).
  \end{alignat}
\end{definition}

Bicharacters in \(\U(\hat{A}\otimes B)\) are interpreted as quantum
group morphisms from~\(\G\) to~\(\G[H]\)
in~\cite{Meyer-Roy-Woronowicz:Homomorphisms}.  We mainly use
bicharacters in \(\U(\hat{A}\otimes \hat{B})\) and rewrite some
definitions in~\cite{Meyer-Roy-Woronowicz:Homomorphisms} in this
setting.

\begin{definition}
  \label{def:right_quantum_morphism}
  A \emph{right quantum group morphism} from~\(\G\) to~\(\DuG[H]\) is
  a morphism \(\Delta_R\colon A\to A\otimes\hat{B}\) such that the
  following diagrams commute:
  \begin{equation}
    \label{eq:right_homomorphism}
    \begin{tikzpicture}[baseline=(current bounding box.west)]
      \matrix(m)[cd,column sep=4em]{
        A&A\otimes \hat{B}\\
        A\otimes A& A\otimes A\otimes \hat{B}\\
      };
      \draw[cdar] (m-1-1) -- node {\(\Delta_R\)} (m-1-2);
      \draw[cdar] (m-1-1) -- node[swap] {\(\Comult[A]\)} (m-2-1);
      \draw[cdar] (m-1-2) -- node[swap] {\(\Comult[A]\otimes\Id_{\hat{B}}\)} (m-2-2);
      \draw[cdar] (m-2-1) -- node[swap] {\(\Id_A\otimes\Delta_R\)} (m-2-2);
    \end{tikzpicture}
    \quad
    \begin{tikzpicture}[baseline=(current bounding box.west)]
      \matrix(m)[cd,column sep=4em]{
        A&A\otimes \hat{B}\\
        A\otimes \hat{B}& A\otimes \hat{B}\otimes \hat{B}\\
      };
      \draw[cdar] (m-1-1) -- node {\(\Delta_R\)} (m-1-2);
      \draw[cdar] (m-1-1) -- node[swap] {\(\Delta_R\)} (m-2-1);
      \draw[cdar] (m-2-1) -- node[swap] {\(\Delta_R\otimes\Id_{\hat{B}}\)} (m-2-2);
      \draw[cdar] (m-1-2) -- node[swap] {\(\Id_A\otimes\DuComult[B]\)} (m-2-2);
    \end{tikzpicture}
  \end{equation}
\end{definition}

The following theorem summarises some of the main results
of~\cite{Meyer-Roy-Woronowicz:Homomorphisms}.

\begin{theorem}
  \label{the:equivalent_notion_of_homomorphisms}
  There are natural bijections between the following sets:
  \begin{enumerate}
  \item bicharacters \(\bichar\in\U(\hat{A}\otimes\hat{B})\)
    from~\(\G\) to~\(\DuG[H]\);
  \item bicharacters \(\Dubichar\in\U(\hat{B}\otimes\hat{A})\)
    from~\(\G[H]\) to~\(\DuG\);
  \item right quantum group homomorphisms \(\Delta_R\colon A\to
    A\otimes \hat{B}\);
  \item functors \(F\colon\Cstcat(\G)\to\Cstcat(\DuG[H])\) with
    \(\Forget_{\DuG[H]}\circ F=\Forget_{\G}\) for the forgetful
    functor \(\Forget_{\G}\colon\Cstcat(\G)\to\Cstcat\);
  \item Hopf \Star{}homomorphisms \(f\colon A^\univ\to\hat{B}^\univ\)
    between universal quantum groups;
  \item bicharacters
    \(\bichar^\univ\in\U(\hat{A}^\univ\otimes\hat{B}^\univ)\).
  \end{enumerate}
  The first bijection maps a bicharacter~\(\bichar\) to
  \begin{equation}
    \label{eq:dual_bicharacter}
    \Dubichar\defeq\flip(\bichar^*).
  \end{equation}
  A bicharacter~\(\bichar\) and a right quantum group
  homomorphism~\(\Delta_R\) determine each other uniquely via
  \begin{equation}
    \label{eq:def_V_via_right_homomorphism}
    (\Id_{\hat{A}} \otimes \Delta_R)(\multunit[A])
    = \multunit[A]_{12}\bichar_{13}.
  \end{equation}
  The functor~\(F\) associated to~\(\Delta_R\) is the unique one that
  maps \((A,\Comult[A])\) to \((A,\Delta_R)\).  In general, \(F\) maps
  a continuous \(\G\)\nb-coaction \(\gamma\colon C\to C\otimes A\) to
  the unique \(\DuG[H]\)\nb-coaction \(\delta\colon C\to C\otimes
  \hat{B}\) for which the following diagram commutes:
  \begin{equation}
    \label{eq:right_quantum_group_homomorphism_as_fucntor}
    \begin{tikzpicture}[baseline=(current bounding box.west)]
      \matrix(m)[cd,column sep=4.5em]{
        C&C\otimes A\\
        C\otimes\hat{B}& C\otimes A\otimes \hat{B}\\
      };
      \draw[cdar] (m-1-1) -- node {\(\gamma\)} (m-1-2);
      \draw[cdar] (m-1-1) -- node[swap] {\(\delta\)} (m-2-1);
      \draw[cdar] (m-1-2) -- node {\(\Id_C\otimes\Delta_R\)} (m-2-2);
      \draw[cdar] (m-2-1) -- node[swap] {\(\gamma\otimes\Id_{\hat{B}}\)} (m-2-2);
    \end{tikzpicture}
  \end{equation}
  The bicharacter in \(\U(\hat{A}\otimes\hat{B})\)
  associated to a Hopf \Star{}homomorphism \(f\colon A^\univ\to
  \hat{B}^\univ\) is \(\bichar \defeq (\Id_{\hat{A}}\otimes
  \Lambda_{\hat{B}}f)(\maxcorep[A])\), where
  \(\maxcorep[A]\in\U(\hat{A}\otimes A^\univ)\) is the
  unique bicharacter lifting \(\multunit[A]\in
  \U(\hat{A}\otimes A)\) and \(\Lambda_{\hat{B}}\colon
  \hat{B}^\univ\to\hat{B}\) is the reducing map.
\end{theorem}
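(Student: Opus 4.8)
The plan is to reduce the whole statement to the homomorphism classification already carried out in~\cite{Meyer-Roy-Woronowicz:Homomorphisms}, adding only the two items that are merely repackaged here: the duality bijection~(1)\(\leftrightarrow\)(2) and the coaction‑functor description~(4). The mutually inverse bijections between~(1), (3), (5) and~(6), together with formula~\eqref{eq:def_V_via_right_homomorphism} relating a bicharacter~\(\bichar\) to a right quantum group morphism~\(\Delta_R\) and the formula \(\bichar=(\Id_{\hat{A}}\otimes\Lambda_{\hat{B}}f)(\maxcorep[A])\) relating~\(\bichar\) to a Hopf \(*\)-homomorphism \(f\colon A^\univ\to\hat{B}^\univ\), are precisely the content of that classification; I would quote the relevant statements and observe that every map involved is natural in both quantum group variables.

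For~(1)\(\leftrightarrow\)(2), the assignment \(\bichar\mapsto\Dubichar\defeq\flip(\bichar^*)\) is an involution interchanging \(\U(\hat{A}\otimes\hat{B})\) and \(\U(\hat{B}\otimes\hat{A})\), so it suffices to see it restricts to a bijection between bicharacters from~\(\G\) to~\(\DuG[H]\) and bicharacters from~\(\G[H]\) to~\(\DuG\); this is a direct manipulation of the defining equations~\eqref{eq:bichar_char_in_first_leg}--\eqref{eq:bichar_char_in_second_leg}, where taking adjoints and interchanging the outer tensor legs converts the first‑leg character condition for~\(\bichar\) into the second‑leg condition for~\(\Dubichar\) and conversely, and naturality is read off the formula. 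For~(4), given a bicharacter and hence a right quantum group morphism~\(\Delta_R\colon A\to A\otimes\hat{B}\), define \(F\) to be the identity on underlying \(\Cst\)-algebras and on morphisms, and on a continuous coaction \(\gamma\colon C\to C\otimes A\) to return the unique \(\delta\colon C\to C\otimes\hat{B}\) making~\eqref{eq:right_quantum_group_homomorphism_as_fucntor} commute. One checks, using the two diagrams~\eqref{eq:right_homomorphism} for~\(\Delta_R\), the comodule property and Podle\'s condition for~\(\gamma\), and~\eqref{eq:Podles} for~\(\Comult[A]\), that~\(\delta\) is again a continuous coaction; hence \(F\colon\Cstcat(\G)\to\Cstcat(\DuG[H])\) is well defined and satisfies \(\Forget_{\DuG[H]}\circ F=\Forget_\G\). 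Conversely, any such~\(F\) is determined by \(F(A,\Comult[A])=(A,\Delta_R)\) for some morphism~\(\Delta_R\), and applying the comodule and naturality axioms of~\(F\) to the coaction~\(\Comult[A]\) and to equivariant morphisms out of \((A,\Comult[A])\) forces~\(\Delta_R\) to satisfy~\eqref{eq:right_homomorphism}, returning an item of~(3); these two passages are mutually inverse, and the coaction produced by~\eqref{eq:right_quantum_group_homomorphism_as_fucntor} coincides with the functor attached to~\(\Delta_R\) in~\cite{Meyer-Roy-Woronowicz:Homomorphisms}.

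I expect the only point requiring real care to be the verification that the \(\DuG[H]\)-coaction manufactured from~\eqref{eq:right_quantum_group_homomorphism_as_fucntor} satisfies all three clauses of Definition~\ref{def:cont_coaction}---injectivity and the Podle\'s condition in particular---together with the bookkeeping that shows the six bijections are mutually compatible and natural in~\(\G\) and~\(\G[H]\). No individual computation here is hard; the work lies in organising the reductions to~\cite{Meyer-Roy-Woronowicz:Homomorphisms} and keeping all conventions about flips, duals and reducing maps consistent.
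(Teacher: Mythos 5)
Your proposal is correct and takes essentially the same route as the paper: Theorem~\ref{the:equivalent_notion_of_homomorphisms} is presented there without proof, explicitly as a summary of the main results of \cite{Meyer-Roy-Woronowicz:Homomorphisms} rewritten with \(\DuG[H]\) in place of \(\G[H]\), and that reference already contains the bijections (1), (3), (5), (6), the duality \(\bichar\mapsto\flip(\bichar^*)\), and the functor description, so your reduction plus the routine leg-swapping check for (1)\(\leftrightarrow\)(2) is exactly what the authors intend. The one point to state carefully is that diagram~\eqref{eq:right_quantum_group_homomorphism_as_fucntor} only determines \(\delta\) uniquely (since \(\gamma\otimes\Id_{\hat{B}}\) is injective), while the \emph{existence} of such a \(\delta\) for every continuous coaction \(\gamma\) is nontrivial and is itself part of what \cite{Meyer-Roy-Woronowicz:Homomorphisms} proves, which your appeal to that paper in effect already covers.
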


\subsection{Twisted tensor products}

Let \(\gamma\colon C\to C\otimes A\) and \(\delta\colon D\to D\otimes
B\) be coactions of \(\G\) and~\(\G[H]\) on \(\Cst\)\nb-algebras~\(C\)
and~\(D\), respectively.  We are going to describe the twisted tensor
product
\[
C\boxtimes D \defeq (C,\gamma)\boxtimes_{\bichar} (D,\delta)
\]
for a bicharacter \(\bichar\in\U(\hat{A}\otimes\hat{B})\).  Let
\((\varphi,\corep{U}^{\Hils})\) and \((\psi,\corep{U}^{\Hils[K]})\)
be faithful covariant representations of \((C,\gamma)\)
and~\((D,\delta)\) on Hilbert spaces \(\Hils\) and~\(\Hils[K]\),
respectively.  Thus
\(\corep{U}^{\Hils}\in\U(\Comp(\Hils)\otimes A)\) and
\(\corep{U}^{\Hils[K]} \in\U(\Comp(\Hils[K])\otimes B)\) are
corepresentations of \(\G\) and~\(\G[H]\).  Let \(\rho^{\Hils}\colon
\hat{A}^\univ\to\Bound(\Hils)\) and \(\rho^{\Hils[K]}\colon
\hat{B}^\univ\to\Bound(\Hils[K])\) be the corresponding
representations of the universal duals.  Let
\(\bichar^\univ\in\U(\hat{A}\otimes \hat{B})\) lift~\(\bichar\), see
Theorem~\ref{the:equivalent_notion_of_homomorphisms}.  Let
\begin{equation}
  \label{eq:Z_appendix}
  Z\defeq
  (\rho^{\Hils}\otimes\rho^{\Hils[K]})(\bichar^\univ)^* \in
  \U(\Hils\otimes\Hils[K]).
\end{equation}
The proof of
\cite{Meyer-Roy-Woronowicz:Twisted_tensor}*{Theorem~4.1} shows that
this is the unique \(Z\in\U(\Hils\otimes\Hils[K])\) with
\begin{equation}
  \label{eq:commutation_between_corep_of_two_quantum_groups}
  \corep{U}^{\Hils}_{1\alpha}\corep{U}^{\Hils[K]}_{2\beta} Z_{12}
  =\corep{U}^{\Hils[K]}_{2\beta}\corep{U}^{\Hils}_{1\alpha}
  \qquad\text{in }\U(\Hils\otimes\Hils[K]\otimes\Hils[L])
\end{equation}
for any \(\bichar\)\nb-Heisenberg pair~\((\alpha,\beta)\) on any
Hilbert space~\(\Hils[L]\).

Define representations \(\iota_C = \varphi_1\) and \(\iota_D =
\tilde\psi_2\) of \(C\) and~\(D\) on \(\Hils\otimes\Hils[K]\) by
\begin{equation}
  \label{eq:represent_braided_tensor_Hilbert}
  \begin{aligned}
    \iota_C(c) = \varphi_1(c) &\defeq \varphi(c)\otimes 1_{\Hils[K]},\\
    \iota_D(d) = \tilde\psi_2(d) &\defeq Z (1_{\Hils}\otimes \psi(d))Z^*.
  \end{aligned}
\end{equation}

\begin{theorem}[\cite{Meyer-Roy-Woronowicz:Twisted_tensor}*{Lemma
    3.20, Theorem 4.3, Theorem 4.9}]
  \label{the:crossed_prod}
  The subspace
  \[
  C\boxtimes D \defeq
  \varphi_1(C)\cdot\tilde{\psi}_2(D)\subset
  \Bound(\Hils\otimes\Hils[K])
  \]
  is a nondegenerate \(\Cst\)\nb-subalgebra.  The crossed product
  \((C\boxtimes D,\iota_C,\iota_D)\), up to equivalence, does not
  depend on the faithful covariant representations
  \((\corep{U}^{\Hils},\varphi)\) and~\((\corep{U}^{\Hils[K]},\psi)\).
\end{theorem}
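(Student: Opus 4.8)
The statement recapitulates results from the first paper in the series, so the plan is to reassemble the argument of~\cite{Meyer-Roy-Woronowicz:Twisted_tensor} around three points: that $C\boxtimes D$ is a $\Cst$\nb-subalgebra of $\Bound(\Hils\otimes\Hils[K])$, that it acts nondegenerately, and that the crossed product $(C\boxtimes D,\iota_C,\iota_D)$ is, up to equivalence, independent of the chosen faithful covariant representations.

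For the first two points, recall that $C\boxtimes D$ is by definition the norm-closed linear span $\varphi_1(C)\cdot\tilde\psi_2(D)$. A general lemma on closed linear spans of products of two nondegenerate $\Cst$\nb-subalgebras $X,Y$ of $\Bound(\Hils\otimes\Hils[K])$, with hypothesis $X\cdot Y=Y\cdot X$, reduces the $\Cst$\nb-algebra claim to the single commutation relation $\tilde\psi_2(D)\cdot\varphi_1(C)\subseteq\varphi_1(C)\cdot\tilde\psi_2(D)$ (the reverse inclusion follows by taking adjoints); nondegeneracy then uses only that the representations $\varphi$ and $\psi$ are nondegenerate, so that $\varphi_1(C)$ and $\tilde\psi_2(D)$ act nondegenerately on $\Hils\otimes\Hils[K]$. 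To prove the commutation relation I would substitute $\varphi_1(c)=\varphi(c)\otimes 1_{\Hils[K]}$ and $\tilde\psi_2(d)=Z(1_{\Hils}\otimes\psi(d))Z^*$ and manipulate $\tilde\psi_2(d)\,\varphi_1(c)$ with the covariance conditions~\eqref{eq:covariant_corep} for $(\varphi,\corep{U}^{\Hils})$ and $(\psi,\corep{U}^{\Hils[K]})$ together with the defining relation~\eqref{eq:commutation_between_corep_of_two_quantum_groups} for $Z$ (valid for any $\bichar$\nb-Heisenberg pair, which one may take faithful); the Podle\'s conditions $\gamma(C)\cdot(1_C\otimes A)=C\otimes A$ and $\delta(D)\cdot(1_D\otimes B)=D\otimes B$ are then what lets one recognise the outcome as an element of $\varphi_1(C)\cdot\tilde\psi_2(D)$. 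This step, where the bicharacter property of~$\bichar$ and both Podle\'s conditions genuinely enter, is the main obstacle.

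For independence of the covariant representations, observe that by~\eqref{eq:Z_appendix} the unitary $Z$, and hence the pair of embeddings $(\varphi_1,\tilde\psi_2)$, depends only on the covariant representations, through the representations $\rho^{\Hils},\rho^{\Hils[K]}$ of the universal duals $\hat{A}^\univ,\hat{B}^\univ$ that they determine; no further choices enter. The equivalence of the crossed products obtained from two different faithful covariant representations is then proved as in~\cite{Meyer-Roy-Woronowicz:Twisted_tensor}: one compares each with a fixed realisation built from a $\bichar$\nb-Heisenberg pair and uses that $\varphi_1(C)$ and $\tilde\psi_2(D)$ generate $C\boxtimes D$, so that the comparison \Star{}homomorphisms are isomorphisms of crossed products, automatically intertwining $\iota_C$ and $\iota_D$.
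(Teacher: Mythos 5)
This theorem is not proved in the paper at all: it sits in the appendix as a recollection of Lemma~3.20, Theorem~4.3 and Theorem~4.9 of \cite{Meyer-Roy-Woronowicz:Twisted_tensor}, so there is no internal argument to compare your proposal against, only the cited source. Your reconstruction does follow the same two-step strategy as that source: the \(\Cst\)-algebra property is reduced to the commutation relation \(\tilde\psi_2(D)\cdot\varphi_1(C)=\varphi_1(C)\cdot\tilde\psi_2(D)\), which is where the covariance conditions~\eqref{eq:covariant_corep}, the characterisation~\eqref{eq:commutation_between_corep_of_two_quantum_groups} of \(Z\) and both Podle\'s conditions enter, and nondegeneracy and the adjoint argument are handled exactly as you say; independence of the chosen faithful covariant representations is obtained, as in the cited paper, by comparing each Hilbert space realisation with the abstract construction built from a \(\bichar\)\nb-Heisenberg pair. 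Two caveats: you correctly identify the commutation computation as ``the main obstacle'' but do not carry it out, and in the last step you assert that the comparison \Star{}homomorphisms are isomorphisms of crossed products, whereas their injectivity is not automatic from the generation property alone --- in \cite{Meyer-Roy-Woronowicz:Twisted_tensor} it is precisely here that faithfulness of the covariant representations is used, via a separate slice argument, so this point deserves an explicit proof rather than the word ``automatically.'' With those two steps filled in, your outline is a faithful reconstruction of the cited proof rather than a different route.
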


We call \(C\boxtimes_{\bichar}D\) the \emph{twisted tensor product} of
\(C\) and~\(D\).  It generalises the minimal tensor product of
\(\Cst\)\nb-algebras.

\subsection{Heisenberg pairs}
\label{subsec:Heis_pair_crossed_prod}

Let \(\Qgrp{G}{A}\) and \(\Qgrp{H}{B}\) be \(\Cst\)\nb-quantum groups
and let \(\multunit[A]\in\U(\hat{A}\otimes A)\) and
\(\multunit[B]\in\U(\hat{B}\otimes B)\) be their reduced bicharacters,
respectively.  Let \(\bichar\in\U(\hat{A}\otimes\hat{B})\) be a
bicharacter from~\(\G\) to~\(\DuG[H]\).

\begin{definition}[\cite{Meyer-Roy-Woronowicz:Twisted_tensor}*{Definition
    3.1}]
  \label{def:V-Heisenberg_pair}
  Let~\(E\) be a~\(\Cst\)\nb-algebra and let \(\alpha\colon A\to E\)
  and \(\beta\colon B\to E\) be morphisms.  The
  pair~\((\alpha,\beta)\) is a \emph{\(\bichar\)\nb-Heisenberg pair}
  or briefly \emph{Heisenberg pair} on~\(E\) if
  \begin{equation}
    \label{eq:V-Heisenberg_pair}
    \multunit[A]_{1\alpha}\multunit[B]_{2\beta}
    = \multunit[B]_{2\beta}\multunit[A]_{1\alpha} \bichar_{12}
    \qquad\text{in }\U(\hat{A}\otimes\hat{B}\otimes E);
  \end{equation}
  here \(\multunit[A]_{1\alpha} \defeq
  ((\Id_{\hat{A}}\otimes\alpha)\multunit[A])_{13}\) and
  \(\multunit[B]_{2\beta} \defeq
  ((\Id_{\hat{B}}\otimes\beta)\multunit[B])_{23}\).
\end{definition}

The following lemma is routine to check:

\begin{lemma}
  \label{lemm:Heis_comult}
  Let~\((\alpha,\beta)\) be a \(\bichar\)\nb-Heisenberg pair on a
  \(\Cst\)\nb-algebra~\(E\).  Define \(\alpha'\colon A\to A\otimes
  B\otimes E\) and \(\beta'\colon B\to A\otimes B\otimes E\) by
  \(\alpha'(a)\defeq
  \bigl((\Id_{A}\otimes\alpha)\Comult[A](a)\bigr)_{13}\) and
  \(\beta'(b)\defeq\bigl((\Id_{B}\otimes\beta)\Comult[B](b)\bigr)_{23}\).
  This is again a \(\bichar\)\nb-Heisenberg pair.
\end{lemma}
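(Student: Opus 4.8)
The plan is to verify the single defining relation~\eqref{eq:V-Heisenberg_pair} for the pair~\((\alpha',\beta')\) on the \(\Cst\)\nb-algebra \(A\otimes B\otimes E\); that \(\alpha'\) and~\(\beta'\) are morphisms is immediate, since each is a composite of the morphism \(\Comult[A]\) (respectively \(\Comult[B]\)), a tensor product of morphisms, and a leg embedding into \(\Mult(A\otimes B\otimes E)\).  I would work throughout in \(\U(\hat{A}\otimes\hat{B}\otimes A\otimes B\otimes E)\), numbering these five legs \(1,\dots,5\).  In this numbering \(\multunit[A]_{1\alpha}\) lives on the legs \((1,5)\), \(\multunit[B]_{2\beta}\) on the legs \((2,5)\), and \(\bichar_{12}\) on the legs \((1,2)\); since these only touch the legs \(\hat{A}\), \(\hat{B}\), \(E\), the Heisenberg relation~\eqref{eq:V-Heisenberg_pair} for~\((\alpha,\beta)\) holds verbatim in this larger algebra.

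First I would expand \(\multunit[A]_{1\alpha'}\) and \(\multunit[B]_{2\beta'}\) using the second-leg character property~\eqref{eq:W_char_in_second_leg} of the reduced bicharacters.  Applying \(\Id_{\hat{A}}\otimes\alpha'\) to \((\Id_{\hat{A}}\otimes\Comult[A])\multunit[A]=\multunit[A]_{12}\multunit[A]_{13}\), and keeping track of which leg of \(A\otimes B\otimes E\) receives the first output of \(\Comult[A]\) and which receives \(\alpha\), gives \(\multunit[A]_{1\alpha'}=\multunit[A]_{13}\,\multunit[A]_{1\alpha}\) with \(\multunit[A]_{13}\) on the legs \((1,3)\).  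Likewise \(\multunit[B]_{2\beta'}=\multunit[B]_{24}\,\multunit[B]_{2\beta}\) with \(\multunit[B]_{24}\) on the legs \((2,4)\).

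The remaining computation is purely formal.  The factor \(\multunit[A]_{13}\) commutes with \(\multunit[B]_{24}\) and with \(\multunit[B]_{2\beta}\), and \(\multunit[A]_{1\alpha}\) commutes with \(\multunit[B]_{24}\), since in each case the operators act on disjoint legs.  Rearranging, \(\multunit[A]_{1\alpha'}\multunit[B]_{2\beta'}=\multunit[A]_{13}\multunit[B]_{24}\cdot\multunit[A]_{1\alpha}\multunit[B]_{2\beta}\); applying the Heisenberg relation for~\((\alpha,\beta)\) to the last two factors turns this into \(\multunit[A]_{13}\multunit[B]_{24}\cdot\multunit[B]_{2\beta}\multunit[A]_{1\alpha}\cdot\bichar_{12}\), and commuting \(\multunit[A]_{13}\) back to the right past \(\multunit[B]_{24}\multunit[B]_{2\beta}\) gives \(\multunit[B]_{24}\multunit[B]_{2\beta}\cdot\multunit[A]_{13}\multunit[A]_{1\alpha}\cdot\bichar_{12}=\multunit[B]_{2\beta'}\multunit[A]_{1\alpha'}\bichar_{12}\), which is~\eqref{eq:V-Heisenberg_pair} for~\((\alpha',\beta')\).

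There is no genuine obstacle: the argument only uses the character property~\eqref{eq:W_char_in_second_leg} of \(\multunit[A]\) and \(\multunit[B]\), the commutation of operators acting on disjoint legs, and the hypothesis that \((\alpha,\beta)\) is a Heisenberg pair.  The one point that requires care — and the only place an error could creep in — is the leg bookkeeping in the expansion step, specifically identifying which legs of \(A\otimes B\otimes E\) carry the outputs of \(\Comult[A]\), \(\Comult[B]\), \(\alpha\) and~\(\beta\).
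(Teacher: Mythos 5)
Your proof is correct, and the leg bookkeeping at the one delicate point is right: \(\multunit[A]_{1\alpha'}=\multunit[A]_{13}\multunit[A]_{1\alpha}\) and \(\multunit[B]_{2\beta'}=\multunit[B]_{24}\multunit[B]_{2\beta}\) in the five-leg numbering, after which everything follows from disjoint-leg commutation and the Heisenberg relation for \((\alpha,\beta)\). The paper gives no argument at all (it declares the lemma ``routine to check''), and your computation using~\eqref{eq:W_char_in_second_leg} is exactly the routine verification that is intended.
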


Equip \(A\) and~\(B\) with the standard coactions \(\Comult[A]\)
and~\(\Comult[B]\) of \(\G\) and~\(\G[H]\), respectively, and form the
tensor product \(A\boxtimes_{\bichar}B\).  This plays a special role,
as explained after Proposition~5.6
in~\cite{Meyer-Roy-Woronowicz:Twisted_tensor}: coactions
\(\gamma\colon C\to C\otimes A\) and \(\delta\colon D\to D\otimes B\)
induce a canonical map
\[
\gamma\boxtimes \delta\colon C\boxtimes_{\bichar} D
\to (C\otimes A)\boxtimes_{\bichar} (D\otimes B)
\cong C\otimes D \otimes (A\boxtimes_{\bichar} B).
\]

\begin{proposition}
  \label{pro:rep_cros_vs_heis}
  Let~\(\Hils\) be a Hilbert space and let
  \(\varphi\colon A\boxtimes_{\bichar}B \to \Bound(\Hils)\) be a
  representation.  Define \(\alpha\defeq\varphi\circ\iota_A\colon A
  \to \Bound(\Hils)\) and \(\beta\defeq\varphi\circ\iota_B\colon B \to
  \Bound(\Hils)\).  The pair~\((\alpha,\beta)\) is a
  \(\bichar\)\nb-Heisenberg pair.
\end{proposition}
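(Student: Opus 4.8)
The plan is to reduce to the universal situation and then unwind the definition of the twisted tensor product. First I would observe that it suffices to show that the canonical embeddings \(\iota_A\colon A\to\Mult(A\boxtimes_\bichar B)\) and \(\iota_B\colon B\to\Mult(A\boxtimes_\bichar B)\) themselves form a \(\bichar\)\nb-Heisenberg pair, that is,
\[
\multunit[A]_{1\iota_A}\multunit[B]_{2\iota_B}
= \multunit[B]_{2\iota_B}\multunit[A]_{1\iota_A}\bichar_{12}
\qquad\text{in }\U(\hat A\otimes\hat B\otimes(A\boxtimes_\bichar B)).
\]
Given this, the statement for a general representation~\(\varphi\) follows by applying the nondegenerate \Star{}homomorphism \(\Id_{\hat A}\otimes\Id_{\hat B}\otimes\varphi\): since \((\Id_{\hat A}\otimes\alpha)\multunit[A] = (\Id_{\hat A}\otimes\varphi)\bigl((\Id_{\hat A}\otimes\iota_A)\multunit[A]\bigr)\) and likewise for \(\beta\), this turns the displayed identity into \eqref{eq:V-Heisenberg_pair} for \((\alpha,\beta)\).

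To prove the displayed identity I would use the concrete model of Theorem~\ref{the:crossed_prod} with a convenient choice of faithful covariant representations: put \(A,\hat A\subseteq\Bound(\Hils)\) and \(B,\hat B\subseteq\Bound(\Hils[K])\) via manageable multiplicative unitaries, and take the pairs \((\Id_A,\multunit[A])\) and \((\Id_B,\multunit[B])\). These are faithful covariant representations of \((A,\Comult[A])\) and \((B,\Comult[B])\): \(\multunit[A]\) is a corepresentation by \eqref{eq:W_char_in_second_leg}, and covariance amounts to \(\Comult[A](a)=\multunit[A](a\otimes 1)\multunit[A]^*\), which is a consequence of the pentagon equation \eqref{eq:pentagon}. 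For this choice the unitary \(Z\) of \eqref{eq:Z_appendix} is explicit: the representations of \(\hat A^\univ\) and \(\hat B^\univ\) corresponding to \(\multunit[A]\) and \(\multunit[B]\) are exactly the reducing maps (by the uniqueness in \eqref{eq:univ_prop_dumaxcprep}), so \(Z=\bichar^*\in\U(\hat A\otimes\hat B)\subseteq\U(\Hils\otimes\Hils[K])\). Hence \(\iota_A(a)=a\otimes 1_{\Hils[K]}\) and \(\iota_B(b)=\bichar^*(1_\Hils\otimes b)\bichar\).

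Substituting these into \eqref{eq:V-Heisenberg_pair} and numbering the four legs of \(\U(\hat A\otimes\hat B\otimes\Bound(\Hils\otimes\Hils[K]))\) so that legs \(1,2\) carry \(\hat A,\hat B\) and legs \(3,4\) carry the two tensor factors of the crossed product, one gets \(\multunit[A]_{1\iota_A}=\multunit[A]_{13}\) and \(\multunit[B]_{2\iota_B}=\bichar_{34}^*\multunit[B]_{24}\bichar_{34}\). Conjugating the whole relation by \(\bichar_{34}\) and using that \(\bichar_{12}\) commutes with \(\bichar_{34}\) (disjoint legs) reduces it to
\[
\bigl(\bichar_{34}\multunit[A]_{13}\bichar_{34}^*\bigr)\multunit[B]_{24}
= \multunit[B]_{24}\bigl(\bichar_{34}\multunit[A]_{13}\bichar_{34}^*\bigr)\bichar_{12}.
\]
Now \(\multunit[A]_{13}\) and \(\multunit[B]_{24}\) commute, and \(\bichar_{34}\multunit[A]_{13}\bichar_{34}^*=\multunit[A]_{13}\bichar_{14}\): this last equality is exactly \eqref{eq:def_V_via_right_homomorphism}, which says that \(\bichar\) implements, by conjugation on the group leg, the right quantum group homomorphism \(\Delta_R\colon A\to A\otimes\hat B\) that it determines. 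Cancelling \(\multunit[A]_{13}\), the identity collapses to \(\bichar_{14}\multunit[B]_{24}=\multunit[B]_{24}\bichar_{14}\bichar_{12}\), which is one of the equivalent forms of the assertion that \(\bichar\) is a character in its second leg, \eqref{eq:bichar_char_in_second_leg}, read through the implementation of \(\DuComult[B]\) by \(\multunit[B]\) coming from \eqref{eq:pentagon}.

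The step I expect to be the main obstacle is purely a matter of conventions: fixing the leg numbering and the (co\nobreakdash-)opposite versions of the comultiplications so that the two conjugation identities — ``\(\bichar\) conjugates \(\multunit[A]\) on its group leg into \(\Delta_R\)'' and ``\(\multunit[B]\) conjugates \(\bichar\) on its \(\hat B\)\nb-leg into a coproduct of \(\bichar\)'' — come out with precisely the factors and orders displayed above. Each individual move is routine once these are fixed; indeed the whole verification is a mild repackaging of the construction of \(Z\) in \cite{Meyer-Roy-Woronowicz:Twisted_tensor}*{Theorem~4.1}, and one could alternatively deduce the statement by quoting that construction.
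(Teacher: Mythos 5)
Your argument is correct, but it follows a genuinely different route from the paper's. The paper works entirely in the Heisenberg-pair picture of the twisted tensor product from \cite{Meyer-Roy-Woronowicz:Twisted_tensor}*{Section 3}: it realises \(A\boxtimes_{\bichar}B\) inside \(\Mult(A\otimes B\otimes E)\) for an \emph{arbitrary} \(\bichar\)\nb-Heisenberg pair \((\alpha',\beta')\) on~\(E\), with \(\iota_A=\bigl((\Id_A\otimes\alpha')\Comult[A]\bigr)_{13}\) and \(\iota_B=\bigl((\Id_B\otimes\beta')\Comult[B]\bigr)_{23}\), and then Lemma~\ref{lemm:Heis_comult} says directly that \((\iota_A,\iota_B)\) is again a \(\bichar\)\nb-Heisenberg pair; composing with~\(\varphi\) finishes the proof in two lines. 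You instead stay in the covariant-representation model of Theorem~\ref{the:crossed_prod}, choose the GNS-type covariant representations coming from manageable multiplicative unitaries, identify \(Z=\bichar^*\) through the reducing maps, and verify \eqref{eq:V-Heisenberg_pair} for \((\iota_A,\iota_B)\) by hand; your reduction to the canonical embeddings, the identification of~\(Z\), and the leg computations all check out, including the final identity \(\bichar_{14}\multunit[B]_{24}=\multunit[B]_{24}\bichar_{14}\bichar_{12}\). What your route buys is self-containedness relative to the material recalled in this paper (only Theorem~\ref{the:crossed_prod} is used), at the price of a longer, convention-sensitive computation; the paper's route buys brevity at the price of invoking the other construction of \(\boxtimes_{\bichar}\), which is not recalled in the appendix. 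One attribution should be tightened: your identity \(\bichar_{34}\multunit[A]_{13}\bichar_{34}^*=\multunit[A]_{13}\bichar_{14}\) is not literally \eqref{eq:def_V_via_right_homomorphism}, which is an abstract identity in \(\U(\hat{A}\otimes A\otimes\hat{B})\) characterising~\(\Delta_R\); what you need is the spatial statement that conjugation by \((\hat\pi\otimes\hat\pi_B)\bichar\) implements~\(\Delta_R\), and this is proved exactly like your second conjugation identity, from \eqref{eq:bichar_char_in_first_leg} together with the implementation of \(\DuComult[A]\) by~\(\Multunit\) coming from the pentagon equation. It is true, so this is not a gap, but it belongs to the same half-page of convention-checking that you yourself flag as the main remaining work.
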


\begin{proof}
  This is the only place where we use the construction of twisted
  tensor products through Heisenberg pairs in
  \cite{Meyer-Roy-Woronowicz:Twisted_tensor}*{Section 3}.  Let
  \((\alpha',\beta')\) be a \(\bichar\)\nb-Heisenberg pair on a
  \(\Cst\)\nb-algebra~\(E\).  Define morphisms
  \begin{alignat*}{2}
   \iota_A\colon A&\to A\otimes B\otimes E,&\qquad
   a\mapsto \bigl((\Id_{A}\otimes\alpha')\Comult[A](a)\bigr)_{13},\\
   \iota_B\colon B&\to A\otimes B\otimes E,&\qquad
   b\mapsto \bigl((\Id_{B}\otimes\beta')\Comult[B](b)\bigr)_{23}.
  \end{alignat*}
  Then \(A\boxtimes_{\bichar} B \cong \iota_A(A)\cdot \iota_B(B)\).
  Lemma~\ref{lemm:Heis_comult} shows that \((\iota_A,\iota_B)\) is a
  \(\bichar\)\nb-Heisenberg pair on \(A\boxtimes_{\bichar}B\).  Hence
  \((\varphi\circ\iota_A, \varphi\circ\iota_B)\) is a
  \(\bichar\)\nb-Heisenberg pair on~\(\Hils\).
\end{proof}

\begin{corollary}
  \label{cor:tensor_determines_bichar}
  If \(A\boxtimes_{\bichar}B \cong A\boxtimes_{\bichar'}B\) as crossed
  products for two bicharacters
  \(\bichar,\bichar'\in\U(\hat{A}\otimes\hat{B})\), then
  \(\bichar=\bichar'\).
\end{corollary}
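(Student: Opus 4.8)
The idea is to recover the bicharacter~\(\bichar\) from intrinsic data of the crossed product \(A\boxtimes_{\bichar}B\) together with its two embeddings \(\iota_A,\iota_B\). By Proposition~\ref{pro:rep_cros_vs_heis}, every representation \(\varphi\colon A\boxtimes_{\bichar}B\to\Bound(\Hils)\) produces a \(\bichar\)\nb-Heisenberg pair \((\varphi\circ\iota_A,\varphi\circ\iota_B)\) on~\(\Hils\); the same representation, viewed through the isomorphism \(A\boxtimes_{\bichar}B\cong A\boxtimes_{\bichar'}B\) (which intertwines the embeddings), produces a \(\bichar'\)\nb-Heisenberg pair with the \emph{same} underlying morphisms \(\alpha\defeq\varphi\circ\iota_A\) and \(\beta\defeq\varphi\circ\iota_B\). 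So it suffices to exhibit a single faithful representation whose associated pair \((\alpha,\beta)\) satisfies~\eqref{eq:V-Heisenberg_pair} for both \(\bichar\) and \(\bichar'\), and then to argue that this forces \(\bichar=\bichar'\).

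First I would pick a faithful representation \(\varphi\) of \(A\boxtimes_{\bichar}B\) — for instance the canonical one on \(\Hils\otimes\Hils[K]\) from Theorem~\ref{the:crossed_prod}, built from faithful covariant representations \((\varphi_0,\corep{U}^{\Hils})\) and \((\psi_0,\corep{U}^{\Hils[K]})\) of the standard coactions \((A,\Comult[A])\) and \((B,\Comult[B])\). Concretely \(\alpha=\iota_A\) acts on the first leg and \(\beta=\tilde\psi_2\) is the \(Z\)\nb-conjugate of the second-leg action, where \(Z=(\rho^{\Hils}\otimes\rho^{\Hils[K]})(\bichar^\univ)^*\) by~\eqref{eq:Z_appendix}. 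The point is that this representation is faithful, so the isomorphism \(A\boxtimes_{\bichar}B\cong A\boxtimes_{\bichar'}B\) lets us regard \((\alpha,\beta)\) also as coming from a faithful representation of \(A\boxtimes_{\bichar'}B\). Proposition~\ref{pro:rep_cros_vs_heis} then yields
\[
\multunit[A]_{1\alpha}\multunit[B]_{2\beta}
= \multunit[B]_{2\beta}\multunit[A]_{1\alpha}\bichar_{12}
= \multunit[B]_{2\beta}\multunit[A]_{1\alpha}\bichar'_{12}
\quad\text{in }\U(\hat A\otimes\hat B\otimes\Bound(\Hils)).
\]
Cancelling the common factor \(\multunit[B]_{2\beta}\multunit[A]_{1\alpha}\), which is unitary, gives \(\bichar_{12}=\bichar'_{12}\) as unitaries in \(\U(\hat A\otimes\hat B\otimes\Bound(\Hils))\), where on the right the third leg carries the identity. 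Hence \((\Id_{\hat A}\otimes\Id_{\hat B}\otimes 1)(\bichar\otimes 1)=(\Id_{\hat A}\otimes\Id_{\hat B}\otimes 1)(\bichar'\otimes 1)\), which forces \(\bichar=\bichar'\) in \(\U(\hat A\otimes\hat B)\).

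The only delicate point is the bookkeeping: one must be sure that the isomorphism \(A\boxtimes_{\bichar}B\cong A\boxtimes_{\bichar'}B\) is as crossed products, i.e.\ it intertwines \(\iota_A\) with \(\iota_A\) and \(\iota_B\) with \(\iota_B\), so that the pair of morphisms \((\alpha,\beta)\) obtained from a faithful representation is literally the same whether we read it off the \(\bichar\)\nb-side or the \(\bichar'\)\nb-side; this is exactly the hypothesis. Everything else is a one-line cancellation in a von Neumann-algebraic tensor product. I do not expect a genuine obstacle here — the work has all been done in Proposition~\ref{pro:rep_cros_vs_heis}; the corollary is just the observation that the Heisenberg-pair relation~\eqref{eq:V-Heisenberg_pair} determines~\(\bichar\) once a faithful \((\alpha,\beta)\) is fixed.
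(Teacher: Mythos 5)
Your proposal is correct and follows essentially the same route as the paper: apply Proposition~\ref{pro:rep_cros_vs_heis} to a single representation of the common crossed product, observe that the resulting pair \((\alpha,\beta)\) is simultaneously a \(\bichar\)- and a \(\bichar'\)-Heisenberg pair, and cancel the unitary factor in~\eqref{eq:V-Heisenberg_pair} to conclude \(\bichar=\bichar'\). The only cosmetic difference is that you insist on a faithful representation, which is not actually needed, since both bicharacters sit in \(\U(\hat A\otimes\hat B)\otimes 1\) in the third leg and the cancellation already identifies them there.
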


\begin{proof}
  Let \(\Hils\), \(\varphi\), \(\alpha\), \(\beta\) as in
  Proposition~\ref{pro:rep_cros_vs_heis}.  The pair \((\alpha,\beta)\)
  is both a \(\bichar\)\nb-Heisenberg pair and a
  \(\bichar'\)\nb-Heisenberg pair by
  Proposition~\ref{pro:rep_cros_vs_heis}.  The commutation
  relation~\eqref{eq:V-Heisenberg_pair} that characterises Heisenberg
  pairs gives
  \[
  \multunit_{2\beta}\multunit_{1\alpha}\bichar_{12}
  = \multunit_{1\alpha}\multunit_{1\beta}
  = \multunit_{2\beta}\multunit_{1\alpha}\bichar'_{12}
  \quad\text{in }\U(\hat{A}\otimes\hat{A}\otimes\Comp(\Hils)).
  \]
  Thus \(\bichar=\bichar'\).
\end{proof}

\begin{bibdiv}
  \begin{biblist}
\bib{Baaj-Skandalis:Unitaires}{article}{
  author={Baaj, Saad},
  author={Skandalis, Georges},
  title={Unitaires multiplicatifs et dualit\'e pour les produits crois\'es de $C^*$\nobreakdash -alg\`ebres},
  journal={Ann. Sci. \'Ecole Norm. Sup. (4)},
  volume={26},
  date={1993},
  number={4},
  pages={425--488},
  issn={0012-9593},
  review={\MRref {1235438}{94e:46127}},
  eprint={http://www.numdam.org/item?id=ASENS_1993_4_26_4_425_0},
}

\bib{Baaj-Vaes:Double_cros_prod}{article}{
  author={Baaj, Saad},
  author={Vaes, Stefaan},
  title={Double crossed products of locally compact quantum groups},
  journal={J. Inst. Math. Jussieu},
  volume={4},
  date={2005},
  number={1},
  pages={135--173},
  issn={1474-7480},
  review={\MRref {2115071}{2006h:46071}},
  doi={10.1017/S1474748005000034},
}

\bib{Barvels-Lentner-Schweigert:Partial_dual}{article}{
  author={Barvels, Alexander},
  author={Lentner, Simon},
  author={Schweigert, Christoph},
  title={Partially dualized Hopf algebras have equivalent Yetter--Drinfel'd modules},
  journal={J. Algebra},
  volume={430},
  date={2015},
  pages={303--342},
  issn={0021-8693},
  doi={10.1016/j.jalgebra.2015.02.010},
}

\bib{Delvaux-vanDaele:Drinf_vs_Heisenberg_doub}{article}{
  author={Delvaux, Lydia},
  author={Van Daele, Alfons},
  title={The Drinfel'd double versus the Heisenberg double for an algebraic quantum group},
  journal={J. Pure Appl. Algebra},
  volume={190},
  date={2004},
  number={1-3},
  pages={59--84},
  issn={0022-4049},
  review={\MRref {2043322}{2004k:16100}},
  doi={10.1016/j.jpaa.2003.10.031},
}

\bib{Joyal-Street:Braided}{article}{
  author={Joyal, Andr\'e},
  author={Street, Ross},
  title={Braided tensor categories},
  journal={Adv. Math.},
  volume={102},
  date={1993},
  number={1},
  pages={20--78},
  issn={0001-8708},
  review={\MRref {1250465}{94m:18008}},
  doi={10.1006/aima.1993.1055},
}

\bib{Kasprzak-Meyer-Roy-Woronowicz:Braided_SU2}{article}{
  author={Kasprzak, Pawe\l },
  author={Meyer, Ralf},
  author={Roy, Sutanu},
  author={Woronowicz, Stanis\l aw Lech},
  title={Braided quantum $\mathrm {SU}(2)$ groups},
  date={2014},
  status={eprint},
  note={\arxiv {1411.3218}},
}

\bib{Kustermans:LCQG_universal}{article}{
  author={Kustermans, Johan},
  title={Locally compact quantum groups in the universal setting},
  journal={Internat. J. Math.},
  volume={12},
  date={2001},
  number={3},
  pages={289--338},
  issn={0129-167X},
  review={\MRref {1841517}{2002m:46108}},
  doi={10.1142/S0129167X01000757},
}

\bib{Kustermans-Vaes:LCQG}{article}{
  author={Kustermans, Johan},
  author={Vaes, Stefaan},
  title={Locally compact quantum groups},
  journal={Ann. Sci. \'Ecole Norm. Sup. (4)},
  volume={33},
  date={2000},
  number={6},
  pages={837--934},
  issn={0012-9593},
  review={\MRref {1832993}{2002f:46108}},
  doi={10.1016/S0012-9593(00)01055-7},
}

\bib{Majid:Quantum_grp}{book}{
  author={Majid, Shahn},
  title={Foundations of quantum group theory},
  publisher={Cambridge University Press},
  place={Cambridge},
  date={1995},
  pages={x+607},
  isbn={0-521-46032-8},
  review={\MRref {1381692}{97g:17016}},
  doi={10.1017/CBO9780511613104},
}

\bib{Majid:Hopfalg_in_BrdCat}{article}{
  author={Majid, Shahn},
  title={Algebras and Hopf algebras in braided categories},
  booktitle={Advances in Hopf algebras (Chicago, IL, 1992)},
  series={Lecture Notes in Pure and Appl. Math.},
  volume={158},
  pages={55--105},
  place={Dekker, New York},
  date={1994},
  review={\MRref {1289422}{95d:18004}},
}

\bib{Meyer-Roy-Woronowicz:Homomorphisms}{article}{
  author={Meyer, Ralf},
  author={Roy, Sutanu},
  author={Woronowicz, Stanis\l aw Lech},
  title={Homomorphisms of quantum groups},
  journal={M\"unster J. Math.},
  volume={5},
  date={2012},
  pages={1--24},
  issn={1867-5778},
  review={\MRref {3047623}{}},
  eprint={http://nbn-resolving.de/urn:nbn:de:hbz:6-88399662599},
}

\bib{Meyer-Roy-Woronowicz:Twisted_tensor}{article}{
  author={Meyer, Ralf},
  author={Roy, Sutanu},
  author={Woronowicz, Stanis\l aw Lech},
  title={Quantum group-twisted tensor products of \(\textup C^*\)\nobreakdash -algebras},
  journal={Internat. J. Math.},
  volume={25},
  date={2014},
  number={2},
  pages={1450019, 37},
  issn={0129-167X},
  review={\MRref {3189775}{}},
  doi={10.1142/S0129167X14500190},
}

\bib{Nest-Voigt:Poincare}{article}{
  author={Nest, Ryszard},
  author={Voigt, {Ch}ristian},
  title={Equivariant Poincar\'e duality for quantum group actions},
  journal={J. Funct. Anal.},
  volume={258},
  date={2010},
  number={5},
  pages={1466--1503},
  issn={0022-1236},
  review={\MRref {2566309}{2011d:46143}},
  doi={10.1016/j.jfa.2009.10.015},
}

\bib{Podles-Woronowicz:Quantum_deform_Lorentz}{article}{
  author={Podle\'s, Piotr},
  author={Woronowicz, Stanis\l aw Lech},
  title={Quantum deformation of Lorentz group},
  journal={Comm. Math. Phys.},
  volume={130},
  date={1990},
  number={2},
  pages={381--431},
  issn={0010-3616},
  review={\MRref {1059324}{91f:46100}},
  eprint={http://projecteuclid.org/euclid.cmp/1104200517},
}

\bib{Radford:Hopf_projection}{article}{
  author={Radford, David E.},
  title={The structure of Hopf algebras with a projection},
  journal={J. Algebra},
  volume={92},
  date={1985},
  number={2},
  pages={322--347},
  issn={0021-8693},
  review={\MRref {778452}{86k:16004}},
  doi={10.1016/0021-8693(85)90124-3},
}

\bib{Roy:Qgrp_with_proj}{thesis}{
  author={Roy, Sutanu},
  title={\(\textup C^*\)\nobreakdash -Quantum groups with projection},
  date={2013},
  type={phdthesis},
  institution={Georg-August Universit\"at G\"ottingen},
  eprint={http://hdl.handle.net/11858/00-1735-0000-0022-5EF9-0},
}

\bib{Roy:Codoubles}{article}{
  author={Roy, Sutanu},
  title={The Drinfeld double for $C^*$\nobreakdash -algebraic quantum groups},
  journal={J. Operator Theory},
  status={accepted},
  note={\arxiv {1404.5384v4}},
  date={2015},
}

\bib{Soltan-Woronowicz:Remark_manageable}{article}{
  author={So\l tan, Piotr M.},
  author={Woronowicz, Stanis\l aw Lech},
  title={A remark on manageable multiplicative unitaries},
  journal={Lett. Math. Phys.},
  volume={57},
  date={2001},
  number={3},
  pages={239--252},
  issn={0377-9017},
  review={\MRref {1862455}{2002i:46072}},
  doi={10.1023/A:1012230629865},
}

\bib{Soltan-Woronowicz:Multiplicative_unitaries}{article}{
  author={So\l tan, Piotr M.},
  author={Woronowicz, Stanis\l aw Lech},
  title={From multiplicative unitaries to quantum groups. II},
  journal={J. Funct. Anal.},
  volume={252},
  date={2007},
  number={1},
  pages={42--67},
  issn={0022-1236},
  review={\MRref {2357350}{2008k:46170}},
  doi={10.1016/j.jfa.2007.07.006},
}

\bib{Woronowicz:Multiplicative_Unitaries_to_Quantum_grp}{article}{
  author={Woronowicz, Stanis\l aw Lech},
  title={From multiplicative unitaries to quantum groups},
  journal={Internat. J. Math.},
  volume={7},
  date={1996},
  number={1},
  pages={127--149},
  issn={0129-167X},
  review={\MRref {1369908}{96k:46136}},
  doi={10.1142/S0129167X96000086},
}

\bib{Woronowicz-Zakrzewski:Quantum_Lorentz_Gauss}{article}{
  author={Woronowicz, Stanis\l aw Lech},
  author={Zakrzewski, Stanis\l aw},
  title={Quantum Lorentz group having Gauss decomposition property},
  journal={Publ. Res. Inst. Math. Sci.},
  volume={28},
  date={1992},
  number={5},
  pages={809--824},
  issn={0034-5318},
  review={\MRref {1196000}{94e:46129}},
  doi={10.2977/prims/1195167937},
}

\bib{Zhang-Zhao:Uq2}{article}{
  author={Zhang, Xiao Xia},
  author={Zhao, Ervin Yunwei},
  title={The compact quantum group $U_q(2)$. I},
  journal={Linear Algebra Appl.},
  volume={408},
  date={2005},
  pages={244--258},
  issn={0024-3795},
  review={\MRref {2166867}{2007b:46126}},
  doi={10.1016/j.laa.2005.06.004},
}

  \end{biblist}
\end{bibdiv}
\end{document}